\numberwithin{equation}{subsection}
\theoremstyle{plain}
\newtheorem{theorem}{Theorem}
\newtheorem{corollary}{Corollary}
\newtheorem{lemma}{Lemma}
\newtheorem{proposition}{Proposition}
\theoremstyle{definition}
\newtheorem{definition}{Definition}
\theoremstyle{remark}
\newtheorem{example}{Example}
\newtheorem{remark}{Remark}
\DeclareFontFamily{OT1}{wncyi}{}
\DeclareFontShape{OT1}{wncyi}{m}{it}{
<5> <6> <7> <8> <9> gen * wncyi
<10> <10.95> <12> <14.4> <17.28> <20.74> <24.88> wncyi10
}{}
\DeclareSymbolFont{cyrletters}{OT1}{wncyi}{m}{it}
\DeclareSymbolFontAlphabet{\cyrmath}{cyrletters}
\DeclareMathSymbol{\rE}{\cyrmath}{cyrletters}{003}
\DeclareMathSymbol{\rD}{\cyrmath}{cyrletters}{068}
\DeclareMathSymbol{\rG}{\cyrmath}{cyrletters}{017}
\DeclareMathSymbol{\rI}{\cyrmath}{cyrletters}{073}
\DeclareMathSymbol{\rL}{\cyrmath}{cyrletters}{076}
\DeclareMathSymbol{\rZ}{\cyrmath}{cyrletters}{090}
\renewcommand{\phi}{\varphi}
\newcommand{\Mc}{\mathcal{M}}
\newcommand{\Rbf}{\mathbf{R}}
\newcommand{\Zbf}{\mathbf{Z}}
\newcommand{\Qbf}{\mathbf{Q}}
\newcommand{\F}{\mathbb{F}}
\newcommand{\Cc}{\mathcal{C}}
\newcommand{\Ac}{\mathcal{A}}
\newcommand{\Fb}{\mathbb{F}}
\newcommand{\Db}{\mathbb{D}}
\newcommand{\A}{\mathbb{A}}
\newcommand{\C}{\mathbb{C}}
\newcommand{\R}{\mathbb{R}}
\newcommand{\Q}{{\mathbb{Q}}}
\newcommand{\Z}{\mathbb{Z}}
\newcommand{\N}{\mathbb{N}}
\newcommand{\Pb}{\mathbb{P}}
\newcommand{\NSets}{\textsc{NSets}}
\newcommand{\Lip}{\mathrm{Lip}}
\newcommand{\Spec}{\mathrm{Spec}}
\newcommand{\Multilin}{\mathrm{Multilin}}
\newcommand{\Mod}{\textsc{Mod}}
\newcommand{\SMod}{\textsc{SMod}}
\newcommand{\Monads}{\textsc{Monads}}
\newcommand{\CBanhalos}{\textsc{CBanhalos}}
\newcommand{\Sym}{\mathrm{Sym}}
\newcommand{\Hom}{\mathrm{Hom}}
\newcommand{\Map}{\mathrm{Map}}
\newcommand{\lf}{\mathrm{leaves}}
\newcommand{\col}{\mathrm{col}}
\newcommand{\uEnd}{\underline{\mathrm{End}}}
\newcommand{\uHom}{\underline{\mathrm{Hom}}}
\newcommand{\Frac}{\mathrm{Frac}}
\newcommand{\Grp}{\textsc{Grp}}
\newcommand{\Alg}{\textsc{Alg}}
\newcommand{\CAlg}{\textsc{CAlg}}
\newcommand{\End}{\mathrm{End}}
\newcommand{\GL}{\mathrm{GL}}
\newcommand{\Iso}{\mathrm{Iso}}
\newcommand{\SIso}{\mathrm{SIso}}
\newcommand{\Res}{\mathrm{Res}}
\title{Banach halos\\
and\\
short isometries}
\author{Tomoki Mihara\footnote{mihara@math.tsukuba.ac.jp}\\
Fr\'ed\'eric Paugam\footnote{frederic.paugam@imj-prg.fr}}
\begin{document}


\maketitle

\begin{abstract}
The aim of this article is twofold.
First, we develop the notion of a Banach halo, similar to that of a Banach ring, except
that the usual triangular inequality is replaced by the inequality
$$|a+b|_R\leq \|(|a|_R,|b|_R)\|_p$$
involving the $\ell^p$-norm for some $p\in ]0,+\infty]$, or
by the inequality
$$|a+b|_R\leq C\max(|a|_R,|b|_R)$$
for $C\in \R_+^*$.
This allows us to put on Banach halos a natural $\R_+^*$-flow,  and to propose a setting for
normed algebra over bases such as $(\Z,|\cdot|_\infty^2)$ where $|\cdot|_\infty^2$ is the square
of the usual archimedean norm.
Then we define and study the groups
$\SIso(C,\sigma,\|\cdot\|)$ of short isometries of normed involutive coalgebras over a base commutative Banach halo $R$.
An aim of this theory is to define a representable
group $K_n\subset \GL_n$ over $\Z$ whose points with values in $\R$
give $O_n(\R)$, and whose points with values in $\Q_p$ give $\GL_n(\Z_p)$, giving
to the classical analogy between these two groups a kind of geometric explanation.
\footnote{
{\bf Keywords:} Algebras with involution, Compact analytic groups, Global analytic geometry.}
\footnote{{\bf MSC classification (2000):} 18D10, 14G22, 14G25, 11G35, 18C15.}
\end{abstract}

\newpage
\tableofcontents
\newpage

\section{Introduction}
A classical leitmotiv of analytic number theory, following Tate's thesis \cite{Tate-these},
is to treat all places of a number field on equal footing, using adelic
methods. Another possible way to interpret this idea, is to try to develop
normed algebra and global analytic geometry (see \cite{Berkovich1}, \cite{Poineau})
in a setting where one can define a natural $\R_+^*$-flow on Banach
rings by setting
$$\sigma_t(R,|\cdot|_R):=(R,|\cdot|_R^t).$$
We propose here a table of analogies between geometry over $\C$, over
a finite field $\F_p$ and arithmetic geometry ($D$ denotes a disc), to show the importance of this flow in analytic geometry:
$$
\begin{array}{|c|c|c|c|}
\hline
\textrm{Geometry} & \textrm{Over }\C & \textrm{Over }\F_p &  \textrm{Arithmetic} \\
\hline
\textrm{Ring} & \C[z] & \F_p[t]  & \Z \\
\hline
\textrm{Origin} & (z)  & (t)  & (p) \\
\hline
\textrm{Unit ``disc''} & D(z=0,1)_{\C} \coloneqq & D(t=0,1)_{\F_p} \coloneqq & D(p=0,1)_{\Z} \coloneqq \\
\textrm{centered at the origin} &  \{z \in \C \mid |z| \leq 1\} & \Mc(\F_p[t],|\cdot|_{triv})  & \Mc(\Z,|\cdot|_{triv}) \\
\hline
\textrm{``Disc'' of radius } p & D(z=0,p)_{\C} \coloneqq  & D(t=0,p)_{\F_p}  & D(p=0,p)_{\Z} \\
\textrm{centered at the origin} & \{z \in \C \mid |z| \leq p\} & \Mc(\F_p[t],|\cdot|_{t,p})  & \Mc(\Z,|\cdot|_{\infty}) \\
\hline
\textrm{Affine space} & \A^{1,an}_\C \coloneqq \Mc(\C[z]) = \C = & \A^{1,an}_{\F_p} \coloneqq \Mc(\F_p[t]) = & \Mc(\Z) \coloneqq \\
& 
& \bigcup_{s \in [0,\infty)} D(t=0,p)_{\F_p}^s & \bigcup_{s \in [0,\infty)} D(p=0,p)_{\Z}^s \\
\hline
\textrm{Compactification} & \Pb^{1,an}_\C \coloneqq \C \Pb^1 = & \Pb^{1,an}_{\F_p} \coloneqq & \overline{\Mc(\Z)} \coloneqq \\
& \A^{1,an}_\C \cup \{\infty\} & \A^{1,an}_{\F_p} \cup \{\infty\} & \Mc(\Z) \cup \{\infty\} ? \\
\hline
\textrm{Symmetries} & \GL_{n,\C} & \GL_{n,\Fb_p} & \GL_n(\Z)\cap O_n(\R)\\
\hline

\end{array}
$$

The usual triangular inequality
$$|a+b|_R\leq |a|_R+|b|_R$$
is not stable by this flow on multiplicative semi-norms
because (for example) the square $|\cdot|_\infty^2$ of
the archimedean absolute value on $\Z$ doesn't fulfill it.
A possible solution to this problem, proposed by Artin in \cite{E-Artin1}
is to use the generalized triangular inequality
$$|a+b|_R\leq C\cdot \max(|a|_R,|b|_R)$$
for some $C\geq 1$. Developing normed algebra directly in this setting
give quite different notions of what one usually gets when the
usual triangular inequality is fulfilled.
We propose here to also use the family
of $\ell^p$-norm triangular inequalities
$$|a+b|_R\leq \|(|a|_R,|b|_R)\|_p,$$
for varying $p\in ]0,+\infty]$ because it is also indeed
stable by the above $\R_+^*$-flow.

We thus define analogs of Banach rings in those two settings,
called respectively short and Lipschitz Banach halos,
and develop in Chapter 2 the
basics of normed algebra over Banach halos. An interesting property of this setting is
that there is a natural embedding of Banach halos in the category of
monads on normed sets, that is compatible with the natural $\R_+^*$-flows
on both categories. In Chapter 3, we study the groups of short involutions
of Banach coalgebras with involutions and show that under some natural finiteness
hypothesis, they are representable. We finish by applying this to the definition
and study of the subgroups of short isometries of various classical groups
over $(\Z,|\cdot|_\infty)$.

\section{Banach halos and their modules}
\subsection{Normed sets and semi-normed abelian groups}
We denote $\R_+=[0,+\infty[$ and $\R_+^*=]0,+\infty[$.
Recall from \cite{Fred-overconvergent-global-analytic-geometry} the following definition and results.
\begin{definition}
A normed set is a pair $(X,|\cdot|_X)$ composed of a set $X$ and a map $|\cdot|_X:X\to \R_+$.
A map $f:(X,|\cdot|_X)\to (Y,|\cdot|_Y)$ between two normed sets is called
\begin{enumerate}
\item a Lipschitz map if there exists $C\geq 0$ with $|f(x)|_Y\leq C\cdot |x|_X$ for every $x\in X$.
\item a short map if $|f(x)|_Y\leq |x|_X$ for every $x\in X$, i.e., if it is Lipschitz with constant $C=1$.
\end{enumerate}
We denote $\NSets_{\mathrm{short}}$ the category of normed sets with short maps,
and $\NSets_{\mathrm{Lipschitz}}$ the category of normed sets and Lipschitz maps.
\end{definition}

The category $\NSets_{\mathrm{short}}$ is complete and cocomplete, but the category $\NSets_{\mathrm{Lipschitz}}$ is not.
However, it will be useful for some purposes.

\begin{proposition}
There is a natural multiplicative symmetric monoidal structure on normed sets given by setting
$(X,|\cdot|_X)\otimes^m (Y,|\cdot|_Y)$ to be $X\times Y$ with the norm given by
$$
(x,y)_{X\otimes^m Y}:=|x|_X\cdot |y|_Y.
$$
The internal homomorphism object for this monoidal structure on $\NSets_{\mathrm{short}}$ is given by the set
$$
\uHom(X,Y):=\{f:X\to Y,\;\exists C\geq 0,\;|f(x)|_Y\leq C|x|_X\textrm{ for all }x\in X\}
$$
of Lipschitz maps with the operator norm given by
$$\|f\|:=\inf\{C\geq 0,\;;|f(x)|_Y\leq C|x|_X\textrm{ for all }x\in X\}.$$
For each $p\in ]0,+\infty]$, there is a symmetric monoidal structure $\oplus^p$ on normed sets given by
setting $(X,|\cdot|_X)\oplus^p (Y,|\cdot|_Y)$ to be $X\times Y$ with the norm given by
$$
(x,y)_{X\oplus^p Y}:=\|(|x|_X,|y|_Y)\|_p=\sqrt[p]{|x|_X^p+|y|_Y^p}
$$
if $p<+\infty$ and
$$
(x,y)_{X\oplus^\infty Y}:=\|(|x|_X,|y|_Y)\|_\infty=\max(|x|_X,|y|_Y).
$$
\end{proposition}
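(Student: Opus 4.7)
The plan is to verify each of the three assertions by routine but careful checking of the symmetric monoidal category axioms, using the fact that the underlying set-level operations are just Cartesian product with the obvious associator, unitor and braiding.

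First, for the multiplicative structure $\otimes^m$: I would take the unit object to be $(\{*\},|*|=1)$. Associativity at the level of norms is immediate because scalar multiplication in $\R_+$ is associative, so the set-theoretic associator $(x,y,z) \mapsto (x,(y,z))$ is an isometry; similarly the braiding and unitors are set-theoretically obvious and norm-preserving. Bifunctoriality for short maps reduces to the inequality
\[
|f(x) \cdot g(y)|_{Y_1 \otimes^m Y_2} = |f(x)|_{Y_1}\,|g(y)|_{Y_2} \leq |x|_{X_1}\,|y|_{X_2},
\]
which uses only that each of $f,g$ is short. The pentagon, triangle and hexagon axioms then hold strictly, since all coherence isomorphisms are set-theoretic identities.

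Second, for the internal hom, I would verify the adjunction
\[
\Hom_{\NSets_{\mathrm{short}}}(X \otimes^m Y, Z) \;\cong\; \Hom_{\NSets_{\mathrm{short}}}(X, \uHom(Y,Z))
\]
by ordinary currying. The key check is that a short map $f: X \otimes^m Y \to Z$, characterised by $|f(x,y)|_Z \leq |x|_X |y|_Y$, corresponds exactly to a short map $\tilde f: X \to \uHom(Y,Z)$, characterised by $\|\tilde f(x)\| \leq |x|_X$ where $\|\tilde f(x)\|$ is the infimum of constants $C \geq 0$ with $|\tilde f(x)(y)|_Z \leq C |y|_Y$. Both conditions are plainly equivalent, once one notes that $\tilde f(x) = f(x,-)$ is Lipschitz precisely with the desired operator norm. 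Naturality in $X$ and $Z$ is immediate. The main subtlety is well-definedness of $\|\cdot\|$: one should check that the set over which the infimum is taken is non-empty (which is built into the definition of $\uHom$), and that $\|g\|=0$ does not pose a problem since the operator norm is allowed to be $0$.

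Third, for the $\ell^p$-structure $\oplus^p$, I would take the unit object to be $(\{*\},|*|=0)$, so that the unitors send $(x,*) \mapsto x$ isometrically via $\|(|x|_X,0)\|_p = |x|_X$ (including the $p=\infty$ case by convention). Associativity of the norm formula is the identity
\[
\bigl\|(\|(a,b)\|_p,c)\bigr\|_p = (a^p+b^p+c^p)^{1/p} = \bigl\|(a,\|(b,c)\|_p)\bigr\|_p
\]
for $p<+\infty$, and the analogous identity for nested maxima when $p=\infty$; thus the set-theoretic associator is again an isometry. Symmetry in $(a,b)$ is trivial. Bifunctoriality for short maps uses the monotonicity of the $\ell^p$-norm in each coordinate: if $|f(x)|_{Y_1} \leq |x|_{X_1}$ and $|g(y)|_{Y_2} \leq |y|_{X_2}$, then $\|(|f(x)|_{Y_1},|g(y)|_{Y_2})\|_p \leq \|(|x|_{X_1},|y|_{X_2})\|_p$.

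The main potential obstacle is purely bookkeeping: since all structural isomorphisms are inherited from $\Sets$, the content lies entirely in checking that the chosen norms on tensor/product and hom objects behave correctly, and that all maps involved respect shortness. The delicate point worth isolating is the operator norm definition and its compatibility with currying in the second part, and the uniform treatment of the boundary cases $p=\infty$ (where sums become maxima) and the unit norm ($0$ for $\oplus^p$, $1$ for $\otimes^m$) in the third part.
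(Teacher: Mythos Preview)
The paper states this proposition without proof; it is recalled from \cite{Fred-overconvergent-global-analytic-geometry} as a routine fact and the authors move directly to the next definition. Your verification is correct and supplies exactly the details one would expect: the coherence isomorphisms are inherited from $\Sets$, the norm formulas make them isometries, and the currying argument for the internal hom is the right one (including your observation that the infimum defining $\|f\|$ is actually attained, so $|f(y)|\leq\|f\|\,|y|$ holds). Your choice of units --- norm $1$ for $\otimes^m$ and norm $0$ for $\oplus^p$ --- is the correct one, and your handling of the boundary case $p=\infty$ is fine.
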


The symmetric monoidal structures give variations of norms of the direct product of copies of a normed set. Since we will use them later, we prepare the convention here.

\begin{definition}
\label{abstract sum}
Let $X$ be a normed set with the underlying set $X_0$, $n \in \N \setminus \{0\}$, and $p \in [1,\infty]$. We denote by $\|\cdot\|_{X,p} \colon X_0^n \to [0,\infty[$ the norm of $X^{\oplus^p n}$.
\end{definition}

There is a natural $\R_+^*$-flow on normed sets given by
$$\sigma_t(X,|\cdot|_X)=(X,|\cdot|_X^t)$$
and it acts on both categories $\NSets_{\mathrm{short}}$ and $\NSets_{\mathrm{Lipschitz}}$ and stabilizes $\otimes^m$ and $\oplus^\infty$,
but modifies $\oplus^p$.

Since $\ell^p$ norms will play an important role in this text,
we recall here the following classical result, for the reader's convenience.
\begin{lemma}
\label{p-norm-q-norm}
Let $0<q\leq p\leq \infty$. There is a natural short inclusion
$$
\ell^q(\N,\R_+)\subset \ell^p(\N,\R_+).
$$
\end{lemma}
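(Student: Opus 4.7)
The plan is to establish the pointwise inequality $\|a\|_p \leq \|a\|_q$ for every sequence $a = (a_n)_{n \in \N}$ of non-negative reals, which directly yields both the set-theoretic inclusion $\ell^q(\N,\R_+) \subset \ell^p(\N,\R_+)$ and shortness of the inclusion map.

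First, I would handle the trivial case $\|a\|_q = 0$, where $a$ is the zero sequence and the inequality is immediate. Otherwise, since both $\|\cdot\|_p$ and $\|\cdot\|_q$ are positively homogeneous of degree $1$, I would rescale and assume $\|a\|_q = 1$; it then suffices to prove $\|a\|_p \leq 1$. From the normalization I get $a_n^q \leq \sum_m a_m^q = 1$, hence $a_n \in [0,1]$ for every $n \in \N$.

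Next I split according to the value of $p$. For $p < \infty$, the elementary fact that a number in $[0,1]$ decreases (or stays equal) when raised to a larger exponent gives $a_n^p \leq a_n^q$ for all $n$ (using $q \leq p$); summing over $n$ and extracting $p$-th roots yields
$$
\|a\|_p^p = \sum_{n \in \N} a_n^p \leq \sum_{n \in \N} a_n^q = 1,
$$
so $\|a\|_p \leq 1 = \|a\|_q$. For $p = \infty$, the bound $a_n \leq 1$ for all $n$ directly gives $\|a\|_\infty = \sup_n a_n \leq 1 = \|a\|_q$.

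The argument is essentially routine and I do not foresee a genuine obstacle; the only small points of care are treating the degenerate case $a = 0$ so that the homogeneity rescaling is well defined, and separating the case $p = \infty$ where the summation argument must be replaced by a supremum. The conclusion is that the identity map on sequences defines a short inclusion of normed sets $\ell^q(\N,\R_+) \hookrightarrow \ell^p(\N,\R_+)$.
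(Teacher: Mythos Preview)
Your proof is correct and follows essentially the same route as the paper: reduce to the inequality $\|a\|_p \leq \|a\|_q$, normalize, and use that numbers in $[0,1]$ decrease under larger exponents. The only cosmetic difference is that you normalize by $\|a\|_q$ while the paper normalizes by $\|x\|_p$; your choice is arguably cleaner since $\|a\|_q$ is known to be finite from the outset.
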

\begin{proof}
Suppose given a sequence $x = (x_k)_{k \in \N} \in \ell^q(\N,\R_+)$. We want to prove the inequality
$$\|x\|_{p}\leq \|x\|_{q}.$$

If $x=0$, it is clear. Suppose $x\neq 0$.
Suppose first that $p<\infty$. We have
$$\frac{|x_k|}{\|x\|_{p}}\leq 1$$
so that
$$\frac{|x_k|^p}{\|x\|_{p}^p}\leq \frac{|x_k|^q}{\|x\|_{p}^q},$$
for any $k \in \N$, and summing over $k$ gives
$$1=\frac{\|x\|_{p}^p}{\|x\|_{p}^p}\leq \frac{\|x\|_{q}^q}{\|x\|_{p}^q}.$$
This implies
$$\|x\|_{p}^q\leq \|x\|_{q}^q$$
and the desired inequality follows by taking $q$-th roots.
If $p=\infty$, the inequality is also clear.
\end{proof}

\begin{definition}
A short semi-norm on an abelian group $A$ is a norm $|\cdot|_A:A\to \R_+$ on the underlying set of $A$
such that $|0|_A=0$ and there exists $p\in ]0,+\infty]$ such that the $p$-triangular inequality
$$|a-b|_A\leq \|(|a|_A,|b|_A)\|_{p}$$
is fulfilled for every $(a,b)\in A^2$.
A Lipschitz semi-norm on an abelian group $A$ is a norm $|\cdot|_A:A\to \R_+$ such that $|0|_A=0$ and
there exists $C\in \R_+^*$ such that the substraction map is $C$-Lipschitz, i.e.,
$$|a-b|_A\leq C\cdot\max(|a|,|b|).$$
\end{definition}

A Lipschitz semi-normed abelian group is thus an abelian group object in the cartesian monoidal category
$(\NSets_{\mathrm{Lipschitz}},\oplus^\infty)$, and a short semi-normed abelian group is an abelian group object in one
of the monoidal categories $(\NSets_{\mathrm{short}},\oplus^p)$.

\begin{remark}
If $|\cdot|_A$ is a semi-norm on an abelian group $A$,
there is a maximal $p\in ]0,+\infty]$ such that
$$|a-b|_A\leq \|(|a|_A,|b|_A)\|_{p}$$
for every $(a,b)\in A^2$. We will denote it $p^{max}_A$.
\end{remark}

To every semi-normed abelian group $(A,|\cdot|_A)$ one may associate the corresponding completion
$(\hat{A},|\cdot|_{\hat{A}})$,
defined in the usual way as the quotient of Cauchy sequences by sequences with limit $0$, equipped
with the quotient norm, defined by
$$|\overline{(a_n)}|_{\hat{A}}:=\lim_{n\to \infty} |a_n|_A.$$
A semi-normed abelian group is called a Banach abelian group if the natural morphism
$$(A,|\cdot|_A)\to (\hat{A},|\cdot|_{\hat{A}})$$
is both a (short or Lipschitz) isomorphism of normed sets and of abelian groups. This is equivalent to
having the fact that every Cauchy sequence in $(A,|\cdot|_A)$ converges.

\subsection{Banach halos}
A Banach halo is a ring that is a Banach abelian group with a sub-multiplicative semi-norm.
More precisely, a short Banach halo is a ring object in the bimonoidal category $(\NSets_{\mathrm{short}},\otimes^m,\oplus^p)$
for some $p\in ]0,+\infty]$, and a Lipschitz Banach halo is a ring object in the bimonoidal category
$(\NSets_{\mathrm{Lipschitz}},\otimes^m,\oplus^\infty)$. Let us write all this explicitely.
\begin{definition}
A short Banach halo is a triad $(R,|\cdot|_R,p_R)$ composed of a ring $R$, a map $|\cdot|_R \colon R \to \R_+$,
and a constant $p_R \in ]0,+\infty]$ satisfying the following:
\begin{enumerate}
\item $|1|_R = 1$ unless $R=\{0\}$.
\item For any $f \in R, |f|_R = 0$ if and only if $f = 0$.
\item For any $f,g \in R, |f-g|_R \leq \|(|f|_R,|g|_R)\|_{p_R}$.
\item For any $f,g \in R, |fg|_R \leq |f|_R\cdot |g|_R$.
\item The semi-normed abelian group $(R,|\cdot|_R)$ is complete.
\end{enumerate}
\end{definition}

For a $p = (C,D) \in ]0,\infty]^2$, we denote by $\|\cdot\|_p$ the seminorm on $\R^2$ defined by
$$\|(x,y)\|_p:=C\cdot \max(|x|,|y|)$$
for $(x,y) \in \R^2$.
Although the convention redundantly refers to the unused datum $D$, using the same convention as in the short setting allows us to make parallel arguments. We note that we do not introduce a notation like $\|\cdot\|_{X,p}$ for a normed set $X$, because it leads confusion due to the fact that the $\ell^{\infty}$-norm rescaled by $C$ does not give an appropriate norm for a direct sum of modules with suitable norms introduced in \S \ref{Direct sums}.

\begin{definition}
A Lipschitz Banach halo is a triad $(R,|\cdot|_R,p_R)$ composed of a ring $R$, a map
$|\cdot|_R\colon R\to \R_+$, and a pair $p_R = (C_R,D_R)$ of two constants $C_R>0$ and $D_R>0$ satisfying the following:
\begin{enumerate}
\item $|1|_R = 1$ unless $R=\{0\}$.
\item For any $f \in R, |f|_R = 0$ if and only if $f = 0$.
\item For any $f,g \in R, |f-g|_R \leq \|(|f|_R,|g|_R)\|_{p_R}$.
\item For any $f,g \in R, |fg|_R \leq D_R\cdot |f|_R\cdot  |g|_R$.
\item The semi-normed abelian group $(R,|\cdot|_R)$ is complete.
\end{enumerate}
\end{definition}

We equip $[0,\infty]^2$ with the direct product order of the opposite order of the usual order. Namely, $p_1 \leq p_2$ holds for a $p_1 = (C_1,D_1) \in [0,\infty]^2$ and a $p_2 = (C_2,D_2) \in [0,\infty]^2$ if and only if $C_2 \leq C_1$ and $D_2 \leq D_1$.

\vspace{0.1in}
Let $\Sigma$ denote either ``short'' or ``Lipschitz''. A $\Sigma$ morphism $(R_1,|\cdot|_{R_1},p_{R_1}) \to (R_2,|\cdot|_{R_2},p_{R_2})$ between $\Sigma$ Banach halos is a $\Sigma$ ring homomorphism $(R_1,|\cdot|_{R_1}) \to (R_2,|\cdot|_{R_2})$ with $p_{R_1} \leq p_{R_2}$.

For a short Banach halo $(R,|\cdot|_R,p_R)$, we denote by ${\rm Lip}(R,|\cdot|_R,p_R)$ the triad $(R,|\cdot|_R,(2^{\frac{1}{p_R}},1))$, which forms a Lipschitz Banach halo because
$$\|(r,s)\|_{p_R}\leq (2\cdot \max(r^{p_R},s^{p_R}))^{\frac{1}{p_R}} = 2^{\frac{1}{p_R}}\cdot \max(r,s) = \|(r,s)\|_{(2^{\frac{1}{p_R}},1)}$$
for any $(r,s) \in [0,\infty[^2$. Then ${\rm Lip}$ is a functor from the category of short Banach halos and short morphisms to the category of Lipschitz Banach halos and Lipschitz morphisms.

\begin{definition}
A $\Sigma$ Banach halo is called commutative if the underlying ring is commutative. An algebra over a $\Sigma$ Banach halo
$R$ is a $\Sigma$ morphism $R\to A$ between $\Sigma$ Banach halos.
\end{definition}

One motivation for introducing Banach halos is that they give a replacement of the notion of Banach ring
that is stable by the natural $\R_+^*$-flow on normed sets. Indeed, one may define such a flow on the category of
short Banach halos by the formula
$$
\sigma_t(A):=(A,|\cdot|_A^t,p_A/t)
$$
and on the category of Lipschitz Banach halos by the formula
$$
\sigma_t(A):=(A,|\cdot|_A^t,(C_A^t,D_A^t)).
$$
This flow is already well understood in the non-archimedean context, and the notion of Banach halo allows
its extension to the archimedean context, because for example
$$\sigma_2(\Z,|\cdot|_\infty,1)=(\Z,|\cdot|_\infty^2,1/2)$$
is not a Banach ring but is a Banach halo.

The Lipschitz case was already known to Artin, and the short case allows an easier treatment
of convergent series.

\begin{example}
We now give various examples of short Banach halos.
\begin{enumerate}
\item Let $(R,|\cdot|_R)$ be a Banach ring in the usual sense. Then $(R,|\cdot|_R,1)$ is a short Banach halo.
\item For example, suppose $n>1$ and equip $V=\Z^n$ with the $\ell^2$ norm on $(\Z,|\cdot|_\infty)$.
Then the algebra $\uEnd(V)$ of $\Z$-linear endomorphisms of $\Z$ with its operator norm and with
constant $1$ is a non-commutative short Banach halo (see the proof of Proposition \ref{free-module-p}).
\item Let $(R,|\cdot|_R,p_R)$ be a short Banach halo. Then there is a maximal $p\geq p_R$ such that
$(R,|\cdot|_R,p)$ is a short Banach halo, denoted $p_R^{max}$ and there is a natural morphism
(defined using Lemma \ref{p-norm-q-norm})
$$(R,|\cdot|_R,p_R)\to (R,|\cdot|_R,p_R^{max}).$$
A short Banach halo of this form is called a maximal halo.
\item For example, the commutative short Banach
halos $(\Z,|\cdot|_\infty,1)$, $(\Z,|\cdot|_0,\infty)$, $(\Z_p,|\cdot|_p,\infty)$
and $(\Z,|\cdot|_\infty^2,1/2)$ are all maximal. 
\item There is no initial commutative short Banach halo.
Indeed, the underlying ring would be $\Z$ and the norm on it
should be the limit of all norms $|\cdot|_\infty^t$ for $t$ going to $\infty$, that is not well defined. This problem
may be solved by working with pro-Banach halos (or ind-Banach halos).
\end{enumerate}
\end{example}

\subsection{Banach modules}
In this Subsection, $\Sigma$ denotes either of the words ``short'' or ``Lipschitz''.
\subsubsection{Definitions}
Modules over Banach halos are essentially modules in the bimonoidal categories $(\NSets_{\mathrm{short}},\otimes^m,\oplus^p)$
and $(\NSets_{\mathrm{Lipschitz}},\otimes^m,\oplus^\infty)$ with bounded constants.

\begin{definition}
A module over a short Banach halo $(R,|\cdot|_R,p_R)$ is a triad $(M,|\cdot|_M,p_M)$ of an $R$-module $M$, a map $|\cdot|_M \colon M \to \R_+$ and a constant $p_M\in ]0,+\infty]$ such that $p_M\geq p_R$ and satisfying the following:
\begin{enumerate}
\item For any $m \in M$, $|m|_M = 0$ if and only if $m = 0$.
\item For any $m,n \in M$, $|m-n|_M \leq \|(|m|_M,|n|_M)\|_{p_M}$
\item For any $f \in R$ and $m \in M, |fm|_M \leq |f|_R\cdot |m|_M$
\item The semi-normed abelian group $(M,|\cdot|_M)$ is complete.
\end{enumerate}
A morphisms $f:M_1\to M_2$
between modules $M_1$ and $M_2$ over $R$ is a short $R$-linear map with
$p_{M_1}\leq p_{M_2}$. We denote $\Mod_R$ the category whose objects are
$R$-modules and whose morphisms are short morphisms.
\end{definition}

\begin{definition}
A module over a Lipschitz Banach halo $(R,|\cdot|_R,p_R)$ with $p_R = (C_R,D_R)$ is a triad $(M,|\cdot|_M,p_M)$
of an $R$-module $M$, a map $|\cdot|_M\colon M\to \R_+$ and
a pair $p_M = (C_M,D_M) \in ]0,\infty]^2$ of two constants $C_M$ and $D_M$ such that $p_M \geq p_R$ and satisfying the following:
\begin{enumerate}
\item  For any $m \in M$, $|m|_M = 0$ if and only if $m = 0$.
\item For any $m,n \in M$, $|m-n|_M \leq \|(|m|_M,|n|_M)\|_{p_M}$
\item For any $f \in R$ and $m \in M, |fm|_M \leq D_M\cdot |f|_R\cdot |m|_M$
\item The semi-normed abelian group $(M,|\cdot|_M)$ is complete.
\end{enumerate}
A Lipschitz (resp.\ short) morphisms $f:M_1\to M_2$
between Lipschitz modules $M_1$ and $M_2$ over $R$ is a Lipschitz (resp.\ short) $R$-linear map with $p_{M_1}\leq p_{M_2}$. We denote $\Mod_R$ the category whose objects are Lischitz $R$-modules and whose
morphisms are Lipschitz morphisms and $\SMod_R$ the category with the same
objects and short morphisms.
\end{definition}

For a module $(M,|\cdot|_M,p_M)$ over a short Banach halo $(R,|\cdot|_R,p_R)$, we denote by ${\rm Lip}(M,|\cdot|_M,p_M)$ the triad $(M,|\cdot|_M,(2^{\frac{1}{p_M}},1))$, which forms a Lipschitz module over ${\rm Lip}(R,|\cdot|_R,p_R)$ because
\begin{eqnarray*}
|m-n|_M & \leq & \|(|m|_M,|n|_M)\|_{p_M} \leq (2 \max(|m|_M^{p_M},|n|_M^{p_M}))^{1/p_M} \\
& = & 2^{\frac{1}{p_M}} \max(|m|_M,|n|_M) = \|(|m|_M,|n|_M)\|_{(2^{\frac{1}{p_M}},1)}.
\end{eqnarray*}
Then ${\rm Lip}$ is a functor from the category of short modules over $(R,|\cdot|_R,p_R)$ and short morphisms to the category of Lipschitz modukes over ${\rm Lip}(R,|\cdot|_R,p_R)$ and Lipschitz morphisms.

\begin{definition}
For $p\geq p_R$, and $R$ a short or a Lipschitz Banach halo,
we denote $\Mod_R^p$ (resp. $\SMod_R^p$) the subcategory of $\Mod_R$ (resp. $\SMod_R$) whose objects
are modules $M$ with $p_M\geq p$.
\end{definition}

\subsubsection{Endomorphism rings}

Let $\Sigma$ denote either ``short'' or ``Lipschitz''. Let $R$ be a $\Sigma$ Banach halo with the underlying ring $R_0$, and $M$ an $R$-module. We denote by $\End_{R}(M) \subset \uHom(M,M)$ the $R_0$-algebra of Lipschitz $R_0$-linear endomorphisms of $M$, equip it with the norm $|\cdot|_{\End_{R}(M)}$ the operator norm $|\cdot|_{op}$ in the case where $\Sigma$ is ``short'' and the rescaled operator norm $D_M^{-1} |\cdot|_{op}$ in the case where $\Sigma$ is ``Lipschitz'', and with the constant $p_{\End_{R}(M)} = p_M$.

\begin{proposition}
\label{endomorphism ring}
The triad $(\End_{R}(M),|\cdot|_{\End_{R}(M)},p_{\End_{R}(M)})$ forms an $R$-algebra, and $M$ forms a module over it.
\end{proposition}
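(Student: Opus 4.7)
The plan is to verify the five Banach halo axioms for the triad $(\End_R(M), |\cdot|_{\End_R(M)}, p_M)$, exhibit a canonical structure morphism $R \to \End_R(M)$ making it an $R$-algebra, and then observe that $M$ acquires a module structure over $\End_R(M)$ via the evaluation action. The unit axiom and non-degeneracy are immediate from the definition of the operator norm, and the required inequality $p_{\End_R(M)} = p_M \geq p_R$ holds by the standing hypothesis on $M$ as an $R$-module.

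For the $p_M$-triangular inequality, fix $f, g \in \End_R(M)$ and $m \in M$. The triangular inequality in $M$ together with the definition of $|\cdot|_{op}$ gives
$$|(f-g)(m)|_M \leq \|(|f(m)|_M, |g(m)|_M)\|_{p_M} \leq \|(|f|_{op}\cdot|m|_M,\; |g|_{op}\cdot|m|_M)\|_{p_M}.$$
Both families of norms in play (the $\ell^p$-norms in the short case and the scaled max-norm in the Lipschitz case) are positively homogeneous, so the factor $|m|_M$ pulls out and one obtains $|f-g|_{op} \leq \|(|f|_{op},|g|_{op})\|_{p_M}$; positive homogeneity again ensures this inequality is preserved under the eventual rescaling by $D_M^{-1}$ in the Lipschitz case. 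For submultiplicativity, applying the operator-norm bound twice gives $|fg(m)|_M \leq |f|_{op}\cdot|g|_{op}\cdot|m|_M$, hence $|fg|_{op} \leq |f|_{op}\cdot|g|_{op}$. This is exactly the condition required in the short case; in the Lipschitz case, dividing by $D_M$ on both sides yields $|fg|_{\End_R(M)} \leq D_M \cdot |f|_{\End_R(M)} \cdot |g|_{\End_R(M)}$, the Lipschitz submultiplicativity with constant $D_{\End_R(M)} = D_M$. Completeness is the standard argument: a Cauchy sequence $(f_n)$ of operators is uniformly bounded in operator norm and induces a pointwise Cauchy sequence in the complete group $M$; the pointwise limit is $R$-linear and bounded, with the Cauchy condition on operator norms passing to the limit.

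For the $R$-algebra structure, send $r \in R$ to left multiplication $\mu_r$. The module axiom $|rm|_M \leq D_M \cdot |r|_R \cdot |m|_M$ (with $D_M = 1$ in the short case) yields $|\mu_r|_{op} \leq D_M \cdot |r|_R$, so $|\mu_r|_{\End_R(M)} \leq |r|_R$; thus $r \mapsto \mu_r$ is a short (hence Lipschitz) ring morphism, whose image lies in the centre precisely because $\End_R(M)$ consists of $R$-linear endomorphisms. The action of $\End_R(M)$ on $M$ is the evaluation $(f,m) \mapsto f(m)$, and the module axiom for it is the defining inequality of the operator norm, rescaled by $D_M$ in the Lipschitz case as above. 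The only place where one must be attentive is the bookkeeping of the $D_M$ factor relating the operator norm and the algebra norm in the Lipschitz setting; everything else is formal verification of axioms.
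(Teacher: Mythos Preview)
Your proof is correct and follows the same approach as the paper: the paper's proof declares that ``the only non-trivial condition is the triangular inequality'' and then carries out exactly the computation you give, pulling $|m|_M$ through the homogeneity of $\|(\cdot,\cdot)\|_{p_M}$ and, in the Lipschitz case, tracking the $D_M^{-1}$ rescaling. Your version is simply more thorough in spelling out submultiplicativity, completeness, the structure morphism $r\mapsto \mu_r$, and the evaluation action, all of which the paper leaves implicit.
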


\begin{proof}
The only non-trivial condition is the 
triangular inequality. Let $f,g \in \End_{R}(M)$. If $\Sigma$ is ``short'', then for any $m \in M$, we have
\begin{eqnarray*}
 |(f - g)m|_M  	& = & |fm - gm|_M \leq \|(|fm|_M,|gm|_M)\|_{p_M}\\
 			& \leq& \|(|f|_{op} \ |m|_M,|g|_{op} \ |m|_M)\|_{p_M} \\
			& = & \|(|f|_{\End_{R}(M)},|g|_{\End_{R}(M)})\|_{p_M} \ |m|_M \\
			& = & \|(|f|_{\End_{R}(M)},|g|_{\End_{R}(M)})\|_{p_{\End_{R}(M)}} \ |m|_M,
\end{eqnarray*}
and hence $|f-g|_{\End_{R}(M)} = |f-g|_{op} \leq \|(|f|_{\End_{R}(M)},|g|_{\End_{R}(M)})\|_{p_{\End_{R}(M)}}$.
If $\Sigma$ is ``Lipschitz'', then for any $m \in M$, we have
\begin{eqnarray*}
 |(f - g)m|_M  	& = & |fm - gm|_M \leq \|(|fm|_M,|gm|_M)\|_{p_M}\\
 			& \leq& \|(|f|_{op} \ |m|_M, |g|_{op} \ |m|_M)\|_{p_M} \\
			& = & D_M \|(|f|_{\End_{R}(M)},|g|_{\End_{R}(M)})\|_{p_M} \ |m|_M \\
			& = & D_M \|(|f|_{\End_{R}(M)},|g|_{\End_{R}(M)})\|_{p_{\End_{R}(M)}} \ |m|_M,
\end{eqnarray*}
and hence $|f-g|_{\End_{R}(M)} = D_M^{-1} |f-g|_{op} \leq \|(|f|_{\End_{R}(M)},|g|_{\End_{R}(M)})\|_{p_{\End_{R}(M)}}$.
\end{proof}

\subsubsection{Direct sums}
\label{Direct sums}
Let $R$ be a short Banach halo, and $(M_i)_{i \in I}$ a family of short modules over $R$. For each $i \in I$, we denote by $\underline{M}_i$ the underlying Abelian group of $M_i$. For an $m = (m_i)_{i \in I} \in \bigoplus_{i \in I} \underline{M}_i$ and a $p \in [p_R,\infty]$, we put $\|m\|_{I,p}:= \sqrt[p]{\sum_{i \in I} |m_i|_{M_i}^p}$. We introduce a Lipschitz counterpart of $\|\cdot\|_{I,p}$. For this purpose, we introduce convention for binary trees.

\vspace{0.1in}
Let $S$ be a set.
We denote by $BT_S$ the set of finite binary trees whose leaves are coloured by $S$, i.e.\ the set defined in the following recursive way:
\begin{itemize}
\item[(i)] $S \times \{0\} \subset BT_S$.
\item[(ii)] $BT_S^2 \subset BT_S$.
\end{itemize}
For a $t \in BT_S$, we denote by $\lf(t)$ the set of leaves of $t$, i.e.\ the set defined in the following recursive way:
\begin{itemize}
\item[(i)] If $t = (s,0) \in S \times \{0\}$, then $\lf(t) = \{s\}$.
\item[(ii)] If $t = (t_0,t_1) \in BT_S^2$, then $\lf(t) = \bigcup_{i=0}^{1} \lf(t_i) \times \{i\}$.
\end{itemize}
For a $t \in BT_S$, we denote by $\col(t)$ the colouring of $t$, i.e.\ the map $\lf(t) \to S$ defined in the following recursive way:
\begin{itemize}
\item[(i)] If $t = (s,0) \in S \times \{0\}$, then $\col(t)$ is the inclusion $\lf(t) = \{s\} \hookrightarrow S$,
\item[(ii)] If $t = (t_0,t_1) \in BT_S^2$, then $\col(t)$ assigns $\col(t_i)(x) \in S$ to each $(x,i) \in \lf(t)$.
\end{itemize}
For a map $f \colon S_0 \to S_1$ and $t \in BT_{S_0}$, we denote by $f \circ t \in BT_{S_1}$ the replacement of the colouring of $t$ by the composition of $f$.

\vspace{0.1in}
Let $C \in [0,\infty[$. For a $t \in BT_{[0,\infty[}$, we define $\|t\|_C \in [0,\infty[$ in the following recursive way:
\begin{itemize}
\item[(i)] If $t = (s,0) \in [0,\infty[ \times \{0\}$, then $\|t\|_C = s$,
\item[(ii)] If $t = (t_0,t_1) \in BT_S^2$, then $\|t\|_C = C\cdot \max(\|t_0\|_C,\|t_1\|_C)$.
\end{itemize}
Let $R$ be a Lispchitz Banach halo, and $(M_i)_{i \in I}$ a family of Lipschitz modules over $R$. For each $i \in I$, we denote by $\underline{M}_i$ the underlying Abelian group of $M_i$, and by $X$ the normed set $\bigsqcup_{i \in I} M_i$ regarded as a subset of $\bigoplus_{i \in I} \underline{M}_i$. For an $m \in \bigoplus_{i \in I} \underline{M}_i$ and a $C \in ]0,\infty[$, we put $BT(m) := \{t \in BT_X \mid \sum_{x \in \lf(t)} \col(t)(x) = m\}$, and $\|m\|_{I,C} := \inf_{t \in BT(m)} \||\cdot|_X \circ t\|_C \in [0,\infty[$. For a $p = (C,D) \in ]0,\infty]^2$, we put $\|\cdot\|_{I,p} = \|\cdot\|_{I,C}$.

\begin{example}
If the direct sum is of the form $M_1\oplus M_2$ with $I = \{1,2\}$ and 
$C=\inf(C_{M_1},C_{M_2})$, then the Lipschitz
norm is given by
$$
\begin{array}{l}
\|(0,0)\|_{I,C} = 0\\
\|(m_1,0)\|_{I,C} = |m_1|_{M_1} \\
\|(0,m_2)\|_{I,C} = |m_2|_{M_2} \\
\|(m_1,m_2)\|_{I,C} = \\
\inf 
\left\{
\begin{array}{l}
C\max(|m_1|_{M_1},|m_2|_{M_2}), \\
\inf\{C\max(C\max(|m_{1,1}|_{M_1},|m_2|_{M_2}),|m_{1,2}|_{M_1}) \mid (m_{1,j})_{j=1}^{2} \in X_{1,2}\}, \\
\inf\{C\max(C\max(|m_1|_{M_1},|m_{2,1}|_{M_2}),|m_{2,2}|_{M_2}) \mid (m_{2,j})_{j=1}^{2} \in X_{2,2}\}, \\
\underset{(m_{i,j})_{i,j=1}^{2} \in X_{1,2} \times X_{2,2}}{\inf}\{C\max(C\max(C\max(|m_{1,1}|_{M_1},|m_{2,1}|_{M_2}),|m_{2,2}|_{M_2}),|m_{1,2}|_{M_1})\}, \\
\underset{(m_{i,j})_{i,j=1}^{2} \in X_{1,2} \times X_{2,2}}{\inf}\{C\max(C\max(C\max(|m_{1,1}|_{M_1},|m_{2,1}|_{M_2}),|m_{1,2}|_{M_1}),|m_{2,2}|_{M_2})\}, \\
\underset{(m_{i,j})_{i,j=1}^{2} \in X_{1,2} \times X_{2,2}}{\inf}\{C\max(C\max(|m_{1,1}|_{M_1},|m_{2,1}|_{M_2}),C\max(|m_{1,2}|_{M_1}),|m_{2,2}|_{M_2}))\}, \\
\vdots
\end{array}
\right\}
\end{array}
$$
for an $(m_1,m_2) \in (M_1 \setminus \{0\}) \times (M_2 \setminus \{0\})$, where
\begin{eqnarray*}
X_{i,n} & = & \left\{(m_{i,j})_{j=1}^{n} \in M_i^n \middle| m_i = \sum_{j=1}^{n} m_{i,j}\right\} \\
\end{eqnarray*}
for each $i \in I$ and $n \in \N$.
More explicitely, if $M_1=M_2=\Z$ with $C=2$ and the usual archimedean norm $|\cdot|_\infty$ on $\Z$,
then we get
$$
\|2\oplus 2\|_{I,2}=2\cdot \max(|2|_\infty,|2|_\infty)=4.
$$
Indeed, the expression
$$2 \oplus 2 = (2 \oplus 0) + (0 \oplus 2)$$
is the only expression of $2 \oplus 2$ as a binary sum of
elements in the subset $(\Z \oplus \{0\}) \cup (\{0\} \oplus \Z)$.
Therefore all other sum expression of $2 \oplus 2$ are at least
$3$-ary sums. Then the value associated to those sum expression is
at least $C^2 \times 1 = 4$, because the norm of non-zero elements
of is at least 1. This proves that the norm of $2\oplus 2$ is $4$.
\end{example}

\vspace{0.1in}
Let $R$ be a $\Sigma$ Banach halo, and let $(M_i)_{i \in I}$ a family of $\Sigma$ modules over $R$. For each $i \in I$, we denote by $\underline{M}_i$ the underlying Abelian group of $M_i$. When $\Sigma$ is ``short'', we put $p := \sup_{i \in I} p_{M_i}$. When $\Sigma$ is ``Lipschitz'', we put $p := (\inf_{i \in I} C_{M_i},\inf_{i \in I} D_{M_i})$. We have introduced a seminorm $\|\cdot\|_{I,p}$ on $\bigoplus_{i \in I} \underline{M}_i$, and denote by $\bigoplus_{i \in I} M_i$ its completion with respect to $\|\cdot\|_{I,p}$.

\begin{proposition}
The above constructed sum $\bigoplus_{i\in I} M_i$ satisfies the universality of the colimit of $(M_i)_{i \in I}$ in the category $\SMod_R$ of $\Sigma$ modules over $R$ and short morphisms.
\end{proposition}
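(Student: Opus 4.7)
The plan is to verify the universal property of the coproduct in $\SMod_R$ directly. First I would check that $\bigoplus_{i\in I}M_i$ really is an object of $\SMod_R$: the non-degeneracy axiom and the $R$-action bound hold on the algebraic direct sum by construction, the parameter $p$ satisfies $p_{M_j} \leq p$ for every $j\in I$ (trivially in the short case, and in the Lipschitz case because $C=\inf_i C_{M_i}\leq C_{M_j}$ and $D=\inf_i D_{M_i}\leq D_{M_j}$), and completeness is automatic since we pass to the completion with respect to $\|\cdot\|_{I,p}$. The $p$-triangular inequality on the algebraic direct sum is immediate in the short case from the associativity identity $\|(a,b,c)\|_p^p=a^p+b^p+c^p$ for $p<\infty$ (and from associativity of $\max$ for $p=\infty$), while in the Lipschitz case it is essentially built into the definition: concatenating two trees $t_0,t_1\in BT_X$ into $(t_0,t_1)\in BT_X$ realises the $C$-triangular inequality by the definition of $\|\cdot\|_C$.

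Next, I would exhibit the canonical inclusions $\iota_j: M_j \to \bigoplus_{i\in I}M_i$ and show they are short. For $m\in M_j$, one has $\|\iota_j(m)\|_{I,p} = |m|_{M_j}$: in the short case this is immediate from the $\ell^p$-definition, and in the Lipschitz case the trivial tree $(m,0)$ at $j$ already gives $|m|_{M_j}$, while any non-trivial tree multiplies by a factor of $C\geq 1$ with at least one zero leaf, hence gives no smaller a value. Then, given a compatible family $(f_i: M_i\to N)_{i\in I}$ of short $R$-linear morphisms into $N\in\SMod_R$, note that $p_N\geq p_{M_i}$ for every $i$ forces $p_N\geq p$ (in the short case because $p_N\geq \sup_i p_{M_i}=p$; in the Lipschitz case because $C_N\leq\inf_i C_{M_i}=C$, similarly for $D$). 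Define $f$ on the algebraic direct sum by $f((m_i)):=\sum_i f_i(m_i)$. In the short case, iterating the $p_N$-triangular inequality in $N$ gives $|f(m)|_N\leq \|(|f_i(m_i)|_N)_i\|_{p_N}$; shortness of each $f_i$ and Lemma \ref{p-norm-q-norm} applied to $p\leq p_N$ then yield $\|(|f_i(m_i)|_N)_i\|_{p_N}\leq \|(|m_i|_{M_i})_i\|_{p_N}\leq \|(|m_i|_{M_i})_i\|_p=\|m\|_{I,p}$. In the Lipschitz case, for any $t\in BT(m)$ one unfolds $t$ by iterating the $C_N$-triangular inequality in $N$, obtaining $|f(m)|_N\leq \||\cdot|_X\circ t\|_{C_N}\leq \||\cdot|_X\circ t\|_C$ since $C_N\leq C$; taking the infimum over $t\in BT(m)$ yields $|f(m)|_N\leq \|m\|_{I,C}$. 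Finally, $f$ extends by continuity to a short $R$-linear morphism $\bar f$ on the completion with $\bar f\circ\iota_j=f_j$, and uniqueness follows from the density of the algebraic direct sum.

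The main obstacle is the Lipschitz case of the shortness of $f$, because $\|\cdot\|_{I,C}$ is defined as an infimum over an infinite combinatorial family of binary trees rather than through a closed-form formula: one must argue by induction on the depth of $t\in BT(m)$ that iterating the $C_N$-triangular inequality in $N$ along $t$ genuinely produces the bound $\||\cdot|_X\circ t\|_{C_N}$, and then use monotonicity in the constant together with $C_N\leq C$ to replace this by $\||\cdot|_X\circ t\|_C$ before taking the infimum. A secondary technical point is checking that the $R$-action bound on $\bigoplus_{i\in I}M_i$ is inherited from the individual $M_i$ in the Lipschitz setting, which reduces to observing that scaling an element by $r\in R$ corresponds to scaling every leaf of a representing tree by $r$ and that this scaling commutes with the tree-indexed infimum.
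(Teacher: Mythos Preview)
The paper states this proposition without proof (a related coproduct statement with an alternative construction via free $R_p$-modules is sketched later, in the section on direct sums and tensor products). Your direct verification is the natural approach, and your treatment of the induced map $f$ and its shortness is correct in both settings.

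There is, however, a genuine gap in the short case when the constants $p_{M_i}$ are not all equal. You assert that the $p$-triangular inequality on the algebraic direct sum (with $p=\sup_i p_{M_i}$) is ``immediate from the associativity identity $\|(a,b,c)\|_p^p=a^p+b^p+c^p$''. But to pass from $\|m-n\|_{I,p}^p=\sum_i|m_i-n_i|_{M_i}^p$ to $\sum_i|m_i|_{M_i}^p+\sum_i|n_i|_{M_i}^p$ you need $|m_i-n_i|_{M_i}\leq\|(|m_i|,|n_i|)\|_p$ in each summand, i.e.\ the $p$-triangular inequality in $M_i$ itself. Yet $M_i$ is only assumed to satisfy the $p_{M_i}$-triangular inequality, and since $p_{M_i}\leq p$ the lemma on $\ell^p$-inclusions gives $\|\cdot\|_p\leq\|\cdot\|_{p_{M_i}}$, so the $p$-inequality is the \emph{stronger} one and need not follow. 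Concretely, take $R=(\Z,|\cdot|_\infty,1)$, $M_1=R$ with $p_{M_1}=1$, and $M_2$ any module with $p_{M_2}=2$ (e.g.\ the zero module): then $p=2$, but $(1,0)-(-1,0)=(2,0)$ has $\|(2,0)\|_{I,2}=2>\sqrt{2}=\|(1,1)\|_2$, so the completion as written is not an object of $\SMod_R$ with constant $p$. This is arguably an issue with the construction as literally stated rather than with your argument; the paper's later construction $\Res_R^{R_p}\ell^{p}(\coprod_i M_i,R_p)/\!\sim$ avoids it by building the $p$-triangular inequality in through the free $R_p$-module, and that is the version for which your universal-map argument goes through cleanly.
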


 Here, the reader should be careful that we are considering short morphisms even when $\Sigma$ is ``Lipschitz''.
\subsubsection{Free modules}
Let $\Sigma$ denote either ``short'' or ``Lipschitz''. For a normed set $X$, we denote by $\ell^{p_R}(X,R)$ the $\Sigma$ module $\bigoplus_{x \in X_+} R |x|_X$, where $R r$ for an $r \in ]0,\infty]$ is the regular $\Sigma$ module over $R$ whose norm is rescaled by $r$. We give another description of $\ell^{p_R}(X,R)$ as a completion of $R^{(X)}$ in order to make its universality clearer.

\vspace{0.1in}
Let $\Sigma$ denote either ``short'' or ``Lipschitz''. When $\Sigma$ is ``short'', for a $p \in ]0,\infty]$, we denote by $|\cdot|_{X,p}$ the seminorm on $R^{(X)}$ given by
$$|\sum a_x\{x\}|_{X,p}:=\|(|a_x|_R\cdot |x|_X)\|_{p},$$
When $\Sigma$ is ``Lipschitz'', for a $p = (C,D) \in ]0,\infty]^2$, we denote by $|\cdot|_{X,p}$ the seminorm on $R^{(X)}$ given by
$$|\sum a_x\{x\}|_{X,p}:=\inf \{r \in [0,\infty[ \mid \forall t \in BT((a_x)_{x \in X}), \||\cdot|_Y \circ t\|_C \leq r\},$$
where $Y$ denotes $\bigsqcup_{x \in X} R |x|_X$ and $R r$ for an $r \in [0,\infty[$ denotes the seminormed Abelian group given as the regular Lipdchitz module over $R$ rescaled by $r$. In both cases, $\ell^{p_R}(X,R)$ is naturally identified with the completion of the free module $R^{(X)}$ with respect to $|\cdot|_{X,p_R}$.

\begin{proposition}
\label{free-module}
Let $R$ be a $\Sigma$ Banach halo. Then the forgetful functor $\Mod_R\to \NSets_{\Sigma}$ has a left adjoint functor given by $X \mapsto \ell^{p_R}(X,R)$.
\end{proposition}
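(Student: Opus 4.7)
The plan is to exhibit the adjunction explicitly and verify the universal property by extending morphisms from the dense free submodule $R^{(X)} \subset \ell^{p_R}(X,R)$. Define the unit $\eta_X : X \to \ell^{p_R}(X,R)$ by $x \mapsto \{x\}$; since $|1|_R = 1$, the basis element $\{x\}$ has norm $|x|_X$, so $\eta_X$ is short (hence Lipschitz in the Lipschitz setting). Given any $\Sigma$-morphism of normed sets $f : X \to M$ to a $\Sigma$-module $M$ over $R$, define $\tilde{f} : R^{(X)} \to M$ by the forced $R$-linear extension $\sum a_x\{x\} \mapsto \sum a_x f(x)$. Any $R$-linear continuous extension $\ell^{p_R}(X,R) \to M$ of $f$ along $\eta_X$ must agree with $\tilde{f}$ on the dense submodule $R^{(X)}$, so uniqueness is immediate, and the remaining content is to show that $\tilde{f}$ is appropriately bounded so as to extend to the completion.

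For the short case, I would iterate the $p_M$-triangular inequality on finite sums (valid by a straightforward induction once one notes that $|{-}n|_M = |n|_M$), then apply the module scalar bound and the shortness of $f$:
$$|\tilde{f}(\textstyle\sum_x a_x \{x\})|_M \leq \|(|a_x|_R |f(x)|_M)_x\|_{p_M} \leq \|(|a_x|_R |x|_X)_x\|_{p_M}.$$
Since $p_M \geq p_R$, Lemma \ref{p-norm-q-norm} gives $\|\cdot\|_{p_M} \leq \|\cdot\|_{p_R}$ on nonnegative sequences, so the right-hand side is dominated by $|\sum_x a_x \{x\}|_{X,p_R}$. Thus $\tilde{f}$ is short on $R^{(X)}$ and extends uniquely to a short $R$-linear map on the completion. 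For the Lipschitz case, the strategy is analogous: given a binary tree decomposition $t$ of $\sum_x a_x \{x\}$, applying $\tilde{f}$ leafwise produces a decomposition of $\sum_x a_x f(x)$ in $M$, and one evaluates the resulting tree using the $C_M$-triangular inequality at each internal node. Since $p_M \geq p_R$ forces $C_M \leq C_R$, the $C_M$-maxima are bounded by the $C_R$-maxima used to define $|\cdot|_{X,p_R}$, and combined with the Lipschitz constant of $f$ and the scalar-multiplication constant $D_M$ one obtains an estimate $|\tilde{f}(m)|_M \leq K\, |m|_{X,p_R}$ for a constant $K$ depending only on $f$ and $M$, permitting continuous extension.

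The main obstacle will be the bookkeeping of constants in the Lipschitz case, where the source norm is itself an infimum over binary tree decompositions and where the Lipschitz scalar-multiplication constant $D_M$ of the target must be threaded carefully through recursive tree evaluation without losing control of the growth. Once both bounded extensions are in hand, naturality in $X$ and $M$ is a direct verification since the assignment $f \mapsto \tilde{f}$ is determined by values on the generators $\{x\}$, and the triangle identities of the adjunction then follow by restricting extensions to $\eta_X$ and by comparing $R$-linear maps on the dense subset $R^{(X)}$.
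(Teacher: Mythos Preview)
Your proposal is correct and follows essentially the same route as the paper: define the unit via $x \mapsto \{x\}$, extend a given $\Sigma$-map $f:X\to M$ linearly to $R^{(X)}$, bound the extension using the $p_M$-triangular inequality together with Lemma~\ref{p-norm-q-norm} (since $p_M\geq p_R$), and pass to the completion. The paper additionally records the verification that $|\cdot|_{X,p_R}$ itself satisfies the $p_R$-triangular inequality, and your treatment of the Lipschitz case via explicit binary-tree bookkeeping is in fact more detailed than the paper's, which writes the estimate in $\ell^p$-style notation that is transparent only in the short case.
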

\begin{proof}
Denote for simplicity $p=p_R$.
The norm $|\cdot|_{I,p}$ fulfills the $p$-triangular inequality. Indeed, let $(a,b) \in R^{(X)}$ with $a = \sum_{x \in X} a_x \{x\}$ and $b = \sum_{x \in X} b_x \{x\}$. If $\Sigma$ is ``short'', then we have
\begin{eqnarray*}
|a+b|_{X,p}	& =		& \|(|a_x+b_x|_R\cdot |x|_X)_{x\in X}\|_{p}\\
							& \leq 	& \|(\|(|a_x|_R,|b_x|_R)\|_{p}\cdot |x|_X)_{x\in X}\|_{p}\\
							& =		&\sqrt[p]{\sum_{x\in X} (|a_x|_R^p+|b_x|_R^p)\cdot |x|_X^p}\\
							& =		&\sqrt[p]{(\sum_{x\in X} |a_x|_R^p\cdot|x|_X^p)+(\sum_{x\in X} |b_x|_R^p\cdot |x|_X^p)}\\
							& =		& \|(|a|_{X,p},|b|_{X,p})\|_p.
\end{eqnarray*}
If $\Sigma$ is ``Lipschitz'', then for any $t = (t_0,t_1) \in BT((a_x)_{x \in X}) \times BT((b_x)_{x \in X})$, we have $t \in BT((a_x+b_x)_{x \in X})$ and
\begin{eqnarray*}
& & \|||\cdot|_Y \circ t\|_C \leq C \max(\||\cdot|_Y \circ t_0\|_C,\||\cdot|_Y \circ t_1\|_C) \leq C \max(|a|_{X,p},|b|_{X,p}) \\
& = & \|(|a|_{X,p},|b|_{X,p})\|_p,
\end{eqnarray*}
where $Y$ denotes $\bigsqcup_{x \in X} R |x|_X$ as above. We already know that $X\mapsto R^{(X)}$ is a left adjoint in the algebraic setting.
Suppose given an $R$-module $M$ with $p_M\geq p$ and a $\Sigma$ map $i:X\to M$. Then we may extend
$i$ by linearity to $\tilde{i}:R^{(X)}\to M$, and we have
$$
\begin{array}{ccccccc}
\left|\tilde{i}(\sum_{x\in X} a_x\{x\})\right|_M	& = &\left|\sum_{x\in X} a_xi(x)\right|_M & \leq &\|(|a_x|_R\cdot |i(x)|_M)_{x\in X}\|_{p_M}\\
							& \leq&  \|(|a_x|_R\cdot |i|_{op} \cdot |x|_X)_{x\in X}\|_{p_M} & \leq &|i|_{op} \|(|a_x|_R\cdot |x|_X)_{x\in X}\|_{p}
\end{array}
$$
by Lemma \ref{p-norm-q-norm} because $p\leq p_M$. This shows that $\tilde{i}$ extends to the completion into a $\Sigma$ morphism
$$
\ell^{p}(X,R)\to M,
$$
that is a morphism of modules if we further set $p_{\ell^{p}(X,R)}:=p=p_R$. Since the restriction
of the obtained map to $X$ gives the original map, we have obtained a bijection
$$
\Hom_{\Mod_R}((\ell^{p_R}(X,R),|\cdot|_{p_R},p_R),(M,|\cdot|_M,p_M))\overset{\sim}{\longrightarrow}
\Hom_{\NSets_{\Sigma}}(X,M).
$$
\end{proof}

We will need an improvement of the above result, whose proof is more technical.

First, let us prove a Lemma.
\begin{lemma}
\label{enlarge-p}
Let $R$ be a $\Sigma$ Banach halo. Then if $p\geq p_R$, there is an algebra $R_p$
that is universal among algebras over $R$ with constant greater than $p$. 
\end{lemma}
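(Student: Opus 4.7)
The plan is to construct $R_p$ as the completion of a quotient of the underlying ring $R_0$ of $R$ by the largest seminorm bounded above by $|\cdot|_R$ that satisfies the $p$-triangular inequality. In the short case ($p\in\,]0,+\infty]$ with $p\geq p_R$), define
$$|f|_{R_p} := \inf\left\{\|(|f_i|_R)_{i=1}^n\|_p \mid n\geq 1,\ f_i\in R_0,\ f=\sum_{i=1}^n f_i\right\},$$
and in the Lipschitz case ($p=(C,D)$), replace the $\ell^p$-norm by the binary-tree norm from \S\ref{Direct sums}: set $|f|_{R_p} := \inf_{t\in BT(f)} \||\cdot|_R\circ t\|_C$, the infimum ranging over finite binary trees whose leaves, colored by elements of $R_0$, sum to $f$. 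The trivial decomposition $f=f$ gives $|f|_{R_p}\leq |f|_R$, and symmetry under negation uses $|{-}h|_R=|h|_R$.

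The first block of checks establishes that $|\cdot|_{R_p}$ is a sub-multiplicative seminorm satisfying the $p$-triangular inequality. In the short case, the $p$-triangular inequality follows by concatenating near-optimal decompositions of $f$ and $g$ and using the componentwise identity $\|(u,v)\|_p^p = \|u\|_p^p+\|v\|_p^p$; sub-multiplicativity comes from $fg=\sum_{i,j}f_ig_j$ together with the tensor identity $\|(a_ib_j)_{i,j}\|_p = \|(a_i)_i\|_p\cdot\|(b_j)_j\|_p$. In the Lipschitz case, concatenation becomes joining two trees at a new root, and multiplication becomes grafting the second tree into every leaf of the first, whose $\|\cdot\|_C$-norm one bounds using the multiplicative constant $D_R$ at each leaf.

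Let $N:=\{f\in R_0 \mid |f|_{R_p}=0\}$; this is a two-sided ideal by sub-multiplicativity. Let $R_p$ be the Banach completion of $R_0/N$ with respect to the induced norm. Ring multiplication extends continuously, producing a $\Sigma$ Banach halo with constant $p$, and the canonical map $R\to R_p$ is a short $\Sigma$ morphism, legitimate since $p_R\leq p$. For the universal property, let $\varphi:R\to A$ be a $\Sigma$ morphism with $p_A\geq p$. For any decomposition $f=\sum_i f_i$ in $R_0$,
$$|\varphi(f)|_A\leq \|(|\varphi(f_i)|_A)_i\|_{p_A}\leq \|(|\varphi(f_i)|_A)_i\|_p\leq \|(|f_i|_R)_i\|_p,$$
where the middle inequality is Lemma~\ref{p-norm-q-norm} in the short case, and its binary-tree analogue in the Lipschitz case (comparing $\|\cdot\|_{C_1}$ and $\|\cdot\|_{C_2}$ for $C_2\leq C_1$). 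Passing to the infimum yields $|\varphi(f)|_A\leq |f|_{R_p}$, so $\varphi$ vanishes on $N$ and extends uniquely to $R_p$ by density.

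The main obstacle will be the Lipschitz case: verifying that the binary-tree infimum is sub-multiplicative requires an inductive argument on tree depth, since grafting the multiplicand tree into every leaf of the multiplier tree produces a composite tree whose $\|\cdot\|_C$-norm must be controlled by the product of the original norms, times the appropriate power of $D_R$. One should also observe that the zero halo is a legitimate answer when the construction collapses: for instance, no non-trivial algebra over $(\R,|\cdot|_\infty)$ satisfies the $\ell^p$-triangular inequality for $p>1$, since $1=\sum_{k=1}^N \tfrac{1}{N}$ forces $|1|_{R_p}\to 0$, so non-triviality of $R_p$ is not part of the conclusion.
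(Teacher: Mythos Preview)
Your proposal is correct and follows essentially the same route as the paper: define the new seminorm as an infimum over finite decompositions (using $\ell^p$-norms in the short case and binary trees in the Lipschitz case), verify sub-multiplicativity via the product decomposition $fg=\sum_{i,j}f_ig_j$, complete, and check universality by bounding $|\varphi(f)|_A$ term-by-term through any decomposition. The paper spells out the short sub-multiplicativity computation in full and waves at the Lipschitz case as ``quite similar''; you do the reverse, sketching the tree-grafting argument explicitly and compressing the short case into the tensor identity $\|(a_ib_j)_{i,j}\|_p=\|(a_i)\|_p\|(b_j)\|_p$. Your remark that the construction may legitimately collapse to the zero halo is a worthwhile observation the paper leaves implicit.
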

\begin{proof}
Let $(S,|\cdot|_S,p)$ be a triad of a ring $S$, a map $|\cdot|_S \colon S \to \R_+$, and a $p \in (0,\infty]$ (resp.\ $p = (C,D) \in (0,\infty]^2$) satisfying
the following:
\begin{itemize}
\item[(i)] $|1|_S = 1$ (resp.\ $> 0$) unless $S = \{0\}$
\item[(ii)] $|fg|_S \leq |f|_S \cdot |g|_S$ (resp.$D |f|_S \cdot |g|_S$) for any $f,g \in S$
\end{itemize}
By the conditions, we have $|1|_S \in \{0,1\}$ (resp.\ $|1|_S \in (0,\infty)$). We denote by $|\cdot|_{S,p}$ the map
\begin{eqnarray*}
S & \to & \R_+\\
f & \mapsto & \inf \left\{\|(|f_i|_S)_{i=0}^{n-1}\|_{n,p} \mid n \in \N, (f_i)_{i=0}^{n-1} \in S^n, \sum_{i=0}^{n-1} f_i = f\right\},
\end{eqnarray*}
where $n$ is regarded as a normed set $\{i \in \N \mid i < n\}$ equipped with the norm given as the constant map with value $1$, the sum of the empty sequence in $S$ ($(f_i)_{i=1}^{n}$ in the case $n=0$) is defined as $0 \in S$, and the $\ell^p$-norm 
of the empty sequence in $[0,\infty)$ ($(|f_i|_S)_{i=1}^{n}$ in the case $n=0$) is defined as $0 \in [0,\infty)$.
Then the triad $(S,|\cdot|_{S,p},p)$ satisfies the following properties:
\begin{itemize}
\item[(i)] $|1|_{S,p} \in \{0,1\}$ (resp.\ $(0,\infty)$).
\item[(ii)] $|0|_{S,p} = 0$.
\item[(iii)] For any $f,g \in S$, $|f-g|_{S,p} \leq \|(|f|_{S,p},|g|_{S,p})\|_p$.
\item[(iv)] For any $f,g \in S$, $|fg|_{S,p} \leq |f|_{S,p} |g|_{S,p}$ (resp.\ $D |f|_{S,p} |g|_{S,p}$).
\end{itemize}
The condition (ii) immediately follows from the definitions related to the empty sequence. The condition (iii) follows from the 
definition of $|\cdot|_{S,p}$ using the infimum. The condition (i) follows from the condition
(iv). We show the condition (iv). For simplicity, we only consider the case where $\Sigma$ is ``short'' because the computation is quite similar. By the continuity of the multiplication on $\R$, it 
suffices to show  $|fg|_{S,p} \leq (|f|_{S,p}+\epsilon)(|g|_{S,p}+\epsilon)$ for any $\epsilon \in (0,\infty)$.
By the definition of $|f|_{S,p}$, there exists an $(f_i)_{i=1}^{n_f} \in S^{n_f}$ with $n_f \in \N$, 
$\sum_{i=1}^{n_f} f_i = f$, and $0 \leq \|(|f_i|_S)_{i=1}^{n_f}\|_p \leq (|f|_{S,p}+\epsilon)$. By the definition of $|g|_{S,p}$, there 
exists a $(g_j)_{j=1}^{n_g} \in S^{n_g}$ with $n_g \in \N$, $\sum_{j=1}^{n_g} g_j = g$,
and $0 \leq \|(|g_j|_S)_{j=1}^{n_g}\|_p \leq (|g|_{S,p}+\epsilon)$. By the definition of $|fg|_{S,p}$, we have
\begin{eqnarray*}
& & |fg|_{S,p} = \left| \sum_{i=1}^{n_f} f_i \sum_{j=1}^{n_g} g_j \right|_{S,p} = \left| \sum_{i=1}^{n_f} \sum_{j=1}^{n_g} f_i g_j \right|_{S,p} \\
& \leq & \|((|f_ig_j|_S)_{i=1}^{n_f})_{j=1}^{n_g}\|_p = \left( \sum_{i=1}^{n_f} \sum_{j=1}^{n_g} |f_ig_j|_S^p \right)^{\frac{1}{p}} \\
& \leq & \left( \sum_{i=1}^{n_f} \sum_{j=1}^{n_g} |f_i|_S^p |g_j|_S^p \right)^{\frac{1}{p}} = \left( \sum_{i=1}^{n_f} |f_i|_S^p \sum_{j=1}^{n_g} |g_j|_S^p \right)^{\frac{1}{p}} \\
& = & \left( \sum_{i=1}^{n_f} |f_i|_S^p \right)^{\frac{1}{p}} \left( \sum_{j=1}^{n_g} |g_j|_S^p \right)^{\frac{1}{p}} = \|(|f_i|_S)_{i=1}^{n_f}\|_p \|(|g_j|_S)_{j=1}^{n_g}\|_p \\
& \leq & (|f|_{S,p}+\epsilon)(|g|_{S,p}+\epsilon).
\end{eqnarray*}
Therefore the condition (iv) holds.

We denote by $R_p$ the completion of $R$ with respect to the uniform structure associated to $|\cdot|_{R,p}$, i.e. the uniform 
structure generated by the basis $\{\{(f,g) \in R^2 \mid |f-g|_{R,p} < \epsilon\} \mid \epsilon \in (0,\infty)\}$. Since $|\cdot|_{R,p}$ 
satisfies the conditions (i) -- (iv), its extension $|\cdot|_{R_p}$ to $R_p$ also satisfies the conditions, and
$(R,|\cdot|_R)_p := (R_p,|\cdot|_{R_p},p)$ forms a Banach halo.

In addition, if $(R,|\cdot|_R,q)$ forms a Banach halo for a $q \leq p$, then $(R,|\cdot|_R)_p$ forms an $(R,|\cdot|_R,q)$-algebra, 
which is universal in $(R,|\cdot|_R,q)$-algebras whose constants are greater than or equal to $p$.
\end{proof}

\begin{proposition}
\label{free-module-p}
Let $R$ be a $\Sigma$ Banach halo.
For $p\geq p_R$, the forgetful functor $\SMod_R^p\to \NSets_\Sigma$ has a left adjoint.
\end{proposition}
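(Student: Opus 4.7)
The plan is to construct the left adjoint as $X \mapsto \ell^p(X, R_p)$, where $R_p$ is the universal $R$-algebra with constant at least $p$ provided by Lemma~\ref{enlarge-p}, and $\ell^p(X, R_p)$ is the free $R_p$-module on $X$ produced by Proposition~\ref{free-module} applied to $R_p$ (which is applicable because $p_{R_p} = p$).

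The essential step is to establish that extension of scalars along the structural map $R \to R_p$ gives an isomorphism of categories between $\SMod_R^p$ and the corresponding category of $R_p$-modules. Given an object $(M, |\cdot|_M, p_M)$ of $\SMod_R^p$, so that $p_M \geq p \geq p_R$, I would extend the $R$-action to an $R_p$-action as follows. For any $f \in R$, any decomposition $f = \sum_{i=1}^n f_i$ in $R$, and any $m \in M$, the $p_M$-triangular inequality on $M$ combined with submultiplicativity of the action gives
$$|fm|_M = \left|\sum_{i=1}^n f_i m\right|_M \leq \|(|f_i m|_M)_{i=1}^n\|_{n, p_M} \leq \|(|f_i|_R)_{i=1}^n\|_{n, p_M} \cdot |m|_M.$$
Since $p_M \geq p$, Lemma~\ref{p-norm-q-norm} yields $\|\cdot\|_{p_M} \leq \|\cdot\|_p$, and taking the infimum over all decompositions gives $|fm|_M \leq |f|_{R,p} \cdot |m|_M$. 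Thus the $R$-action on $M$ is uniformly continuous for the seminorm $|\cdot|_{R,p}$ with respect to which $R_p$ is the completion, and extends uniquely and continuously to a short $R_p$-action on $M$ with the same norm and constant. Any short $R$-linear morphism between objects of $\SMod_R^p$ is then automatically $R_p$-linear by density of $R$ in $R_p$.

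Once this identification is in place, the left adjoint $X \mapsto \ell^p(X, R_p)$ coming from Proposition~\ref{free-module} applied to the Banach halo $R_p$ composes with the equivalence $\SMod_R^p \cong \SMod_{R_p}^p$ to yield the desired left adjoint. The main obstacle is the careful verification of the action extension in the Lipschitz setting, where $|\cdot|_{R,p}$ is defined via binary-tree decompositions rather than flat sums; the same strategy works once one substitutes the tree-indexed variant of the $p$-triangular inequality on $M$ for the sum-indexed one used above, but the bookkeeping with $BT(\cdot)$ and the colourings $\col(t)$ is notationally heavier.
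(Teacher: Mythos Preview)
Your approach is correct and follows the same overall architecture as the paper: construct $R_p$ via Lemma~\ref{enlarge-p}, realize the adjoint as $X\mapsto \ell^p(X,R_p)$ (viewed as an $R$-module by restriction), and reduce everything to showing that every object of $\SMod_R^p$ carries a canonical $R_p$-module structure extending its $R$-module structure.

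The one genuine methodological difference lies in how that extension of scalars is verified. You do it by a direct estimate: for each decomposition $f=\sum_i f_i$ in $R$ you bound $|fm|_M$ by $\|(|f_i|_R)\|_{p_M}\,|m|_M$, pass to $\|\cdot\|_p$ via Lemma~\ref{p-norm-q-norm}, and take the infimum to obtain $|fm|_M\le |f|_{R,p}\,|m|_M$; then you invoke completeness and density to extend. The paper instead packages this step categorically: by Proposition~\ref{endomorphism ring} and Lemma~\ref{p-norm-q-norm}, the triad $(\End_R(M),|\cdot|_{\End_R(M)},p)$ is an $R$-algebra with constant $\ge p$, so the universal property of $R_p$ immediately makes it an $R_p$-algebra, whence $M$ is an $R_p$-module. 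The paper's route has the advantage of treating the short and Lipschitz cases in a single stroke, with no need to unwind the binary-tree definition of $|\cdot|_{R,p}$ in the Lipschitz setting that you flag as ``notationally heavier''; your route has the virtue of being more elementary and of making explicit the inequality that is doing the work.
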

\begin{proof}
Let $X=(X,|\cdot|_X)$ be a normed set.
We need to construct the free $p$-normed module on $X$.
For this purpose, we will use Lemma \ref{enlarge-p} to enlarge $p_R$.

We recall that $\ell^{p_R}(X,R)$ the $R$-module is naturally identified with the completion of
the free module $R^{(X)}$ on the underlying ring of $R$ with respect to the uniform structure associated to $\|\cdot\|_{X,p_R}$.
For any $p \geq p_R$, we denote by $\Res_R^{R_p}\ell^{p}(X,R_p)$ the restriction of scalar of the $R_p$-module $\ell^{p}(X,R_p)$ by
the canonical homomorphism $(R,|\cdot|_R,p_R) \to (R,|\cdot|_R)_p$. Here, the restriction of scalar works by Lemma \ref{p-norm-q-norm}.

We show that $\Res_R^{R_p}\ell^{p}(-,R_p)$ gives the left adjoint functor to the forgetful functor
$$\SMod_R^p\to \NSets_\Sigma.$$
For this purpose, it suffices to show that for any normed set $X$, any $\Sigma$ $R$-module $(M,|\cdot|_M,p_M)$ with $p_M \geq p$, and 
any short map $\varphi \colon X \to M$, the $R$-linear extension $R^{(\varphi)} \colon R^{(X)} \to M$ of $\varphi$ uniquely 
extends to a short morphism $\Res_R^{R_p}\ell^{p}(X,R_p) \to M$ of $\Sigma$ $R$-modules. Indeed, when $\Sigma$ is ``Lipschitz'', the extension property for a Lipschitz morphism follows from that of a short morphism because the rescaling of a norm on $X$ by a positive real number is an isomorphism in $\NSets_\mathrm{Lipschitz}$.
When $p = p_R$, the assertion follows from the natural 
isomorphisms $R\cong R_p$ and $\Res_R^{R_p}\ell^{p}(X,R_p) \cong \ell^{p_R}(X,R)$. Therefore it suffices to show that the $\Sigma$ $R$-module 
structure of $M$ uniquely extends to a $\Sigma$ $R_p$-module structure, because then the $R_p$-linear extension
$R_p^{(\varphi)} \colon R_p^{(X)} \to M$ of $\varphi$ uniquely extends to $\Res_R^{R_p}\ell^{p}(X,R_p)$.

By Lemma \ref{p-norm-q-norm} and Proposition \ref{endomorphism ring}, $(\End_{R}(M),|\cdot|_{\End_{R}(M)},p)$ forms an $R$-algebra and $M$ forms a module over it. Therefore by the universality of $R_p$, $(\End_{R}(M),|\cdot|_{\End_{R}(M)},p)$ forms an $R_p$-algebra, and $M$ forms an $R_p$-module.
\end{proof}

There is a natural flow on monads on normed sets induced by the flow on normed sets,
and defined (Following Nikolai Durov) by
$$\sigma_t\Sigma(X):=\sigma_t\Sigma(\sigma_{1/t}X).$$
Another motivation underlying the definition of Banach halos is given by the following result.
\begin{corollary}
There is a natural fully faithful functor
$$
\begin{array}{ccc}
\CBanhalos_\Sigma 	& \to		& \Monads(\NSets_\Sigma)\\
A			        &\mapsto	& \Lambda_A
\end{array}
$$
from commutative $\Sigma$ Banach halos to monads on normed sets defined by
$$
\Lambda_A(X):=\ell^{p_A}(X,A).
$$
This functor is compatible with the $\R_+^*$-flow.
\end{corollary}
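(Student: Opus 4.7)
The plan is to construct the functor via the adjunction of Proposition \ref{free-module-p}, verify full-faithfulness by evaluating monad morphisms at singletons and invoking a Lawvere-theoretic argument, and check flow compatibility by direct computation.

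The adjunction $\ell^{p_A}(-, A) \dashv U$ yields the monad $\Lambda_A = U \circ \ell^{p_A}(-,A)$, with unit $\eta^A_X : x \mapsto \{x\}$ and multiplication $\mu^A_X$ given by evaluation of iterated formal sums; a $\Sigma$ halo morphism $\phi : A \to B$ (already requiring $p_A \leq p_B$) induces $\tau_X : \sum a_x \{x\} \mapsto \sum \phi(a_x)\{x\}$, which is short, natural in $X$, and commutes with $\eta, \mu$ by construction. Denoting by $\{*\}_r$ the singleton of norm $r$, one has $\Lambda_A(\{*\}_1) \cong A$ canonically, so $\phi$ is recovered as $\tau_{\{*\}_1}$, yielding faithfulness. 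For fullness, given $\tau : \Lambda_A \to \Lambda_B$, set $\phi := \tau_{\{*\}_1}$: then $\tau \eta^A = \eta^B$ gives $\phi(1)=1$; choosing $r \geq |a|_A$ and $\tilde a : \{*\}_r \to A$, $* \mapsto a$, left multiplication by $a$ equals $\mu^A_{\{*\}_1} \circ \Lambda_A(\tilde a)$, and naturality of $\tau$ along $\tilde a$ combined with $\mu$-compatibility yields $\phi(ab) = \phi(a)\phi(b)$; the norm bound $|\phi(a)|_B \leq |a|_A$ and inequality $p_A \leq p_B$ follow because $\tau_{\{*\}_1}$ is a short morphism of completed modules.

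The main obstacle is additivity, which must be derived purely from the monad data. My plan is to first pin down $\tau_{2_1}(\{1\}+\{2\})$ in $\Lambda_B(2_1)$, where $2_1$ is the two-point set of norms $1$. Writing $\tau_{2_1}(\{1\}+\{2\}) = z_1\{1\} + z_2\{2\}$, naturality along the swap forces $z_1 = z_2$; naturality along $j : 2_1 \to A$ sending $1 \mapsto 1$, $2 \mapsto 0$ (short because $|0|_A = 0$), combined with the vanishing $\{0\} = 0$ in $\Lambda_B(A)$ (as $|0|_A = 0$) and $\mu$-compatibility, yields $z_1 = \phi(1) = 1$. Hence $\tau_{2_1}(\{1\}+\{2\}) = \{1\}+\{2\}$. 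Applying naturality along $g : 2_r \to A$, $i \mapsto a_i$, and then $\mu$-compatibility delivers $\phi(a_1 + a_2) = \phi(a_1) + \phi(a_2)$. Continuity, combined with naturality on single-generator elements, then determines $\tau$ on all of $\Lambda_A(X)$ and shows $\tau = \Lambda(\phi)$, proving fullness.

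Finally, for flow compatibility in the short setting, an element $\sum a_x\{x\} \in (\sigma_t \Lambda_A)(X) = \sigma_t(\ell^{p_A}(\sigma_{1/t} X, A))$ has norm
\[
\Bigl(\sum |a_x|_A^{p_A} |x|_X^{p_A/t}\Bigr)^{t/p_A} = \Bigl(\sum (|a_x|_A^t)^{p_A/t} |x|_X^{p_A/t}\Bigr)^{t/p_A},
\]
exactly matching the norm in $\Lambda_{\sigma_t A}(X) = \ell^{p_A/t}(X, \sigma_t A)$ with $p_{\sigma_t A} = p_A/t$. The Lipschitz case is analogous, using the tree-based norm of Section \ref{Direct sums}.
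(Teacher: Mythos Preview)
Your approach is the same as the paper's in outline---evaluate at the singleton for faithfulness, direct computation for the flow---but you supply a genuine argument for fullness where the paper simply asserts it in one line from the identification $\Lambda_R(\{1_1\})\cong R$. In that sense your proof is more complete.

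There is one small gap in your fullness argument: the inequality $p_A\leq p_B$ (required for $\phi$ to be a halo morphism) cannot be extracted from $\tau_{\{*\}_1}$ alone. A monad morphism is just a natural transformation of endofunctors on $\NSets_\Sigma$, so $\tau_{\{*\}_1}$ is merely a short map of normed sets, and at the singleton the constants $p_A,p_B$ are invisible. The fix is immediate with the machinery you have already built: once you know $\tau_{2_1}(\{1\}+\{2\})=\{1\}+\{2\}$, shortness of $\tau_{2_1}$ gives
\[
2^{1/p_B}=\|\{1\}+\{2\}\|_{\ell^{p_B}(2_1,B)}\;\leq\;\|\{1\}+\{2\}\|_{\ell^{p_A}(2_1,A)}=2^{1/p_A},
\]
hence $p_A\leq p_B$. (In the Lipschitz setting the analogous inequality on $(C,D)$ is recovered the same way from the tree norm on $2_1$.) With this correction the argument goes through.
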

\begin{proof}
In the short situation, the functoriality in $A$ follows from Lemma \ref{p-norm-q-norm}. Indeed, if $f:R\to S$ is a morphism
of Banach halos and $\sum a_x\{x\}\in \Lambda_R(X)$, then we have
$$
|\sum f(a_x)\{x\}|_{p_S}=\|(|f(a_x)|_S\cdot |x|_X)_{x\in X}\|_{p_S}\leq \|(|a_x|_R\cdot |x|_X)_{x\in X}\|_{p_S}\leq
\|(|a_x|_R\cdot |x|_X)_{x\in X}\|_{p_R}
$$
because $p_R\leq p_S$, so that $f$ induces a short map
$\Lambda_R(X)\to \Lambda_S(X)$.
The same result is also true in the Lipschitz setting.
The full faithfulness of the functor follows from the identification
$$\Lambda_R(\{1_1\})=R.$$
The fact that it is compatible with the flow, in the short setting, follows from $p_{\sigma_t R}=p_R/t$ and from the equalities
$$
\begin{array}{ccl}
|\sum a_x\{x\}|_{p_{\sigma_t R}}	& =	& \|(|a_x|_R\cdot |x|_X)\|_{p_{\sigma_t R}}\\
							& =	& \left(\sum |a_x|_{\sigma_t R}^{p_{\sigma_tR}}\cdot |x|^{p_{\sigma_tR}}\right)^{1/p_{\sigma_tR}}\\
							& =	& \left(\left(\sum (|a_x|_R^t)^{p_R/t}\cdot(|x|^{p_R})^{1/t}\right)^{1/p_R}\right)^{t}\\
							& =	& \|(|a_x|_R\cdot |x|_X^{1/t})\|_{p_R}^t
\end{array}
$$
This compatibility with the flow is clear in the Lipschitz setting because the monoidal structure $\oplus^\infty$ on normed set is compatible with the flow.
\end{proof}

Let $S$ be a Lipschitz Banach halo with the underlying ring $S_0$. It is notable that the notion of the freeness of an $R$-module is strictly stronger than the freeness of the underlying $S_0$-module even if it is of finite rank. For example, $\Lip(\Z,|\cdot|_0,\infty)$ is a $\Lip(\Z,|\cdot|_{\infty},1)$-module which is not free but whose underlying $\Z$-module is a free $\Z$-module of rank $1$. Indeed, there exists no non-trivial Lipschitz $\Lip(\Z,|\cdot|_{\infty},1)$-linear homomorphism $\Lip(\Z,|\cdot|_0,\infty) \to \Lip(\Z,|\cdot|_{\infty},1)$. In order to avoid such a pathlogic setting, we give a criterion for the boundedness of $S_0$-linear homomorphisms using $\|\cdot\|_{S,\infty}$ (cf.\ Definition \ref{abstract sum}).

\begin{proposition}
\label{criterion for boundedness}
Let $M$ and $N$ be $S$-modules. Suppose that the underlying $S_0$-module of $M$ is a free $S_0$-module of rank $n \in \N$ with an $S_0$-linear basis $(e_j)_{j=1}^{n} \in M^n$ and there exists a $C \in [0,\infty[$ such that for any $(s_j)_{j=1}^{n} \in S_0^n$, $\|(s_j)_{j=1}^{n}\|_{S,\infty} \leq C |\sum_{j=1}^{n} s_j e_j|_M$. Then every $S_0$-linear homomorphism $f:M \to N$ is Lipschitz.
\end{proposition}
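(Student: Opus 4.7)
The plan is to unwind $m \in M$ in the given basis, push through $f$, and bound each coordinate contribution using the submultiplicativity of the scalar action on $N$ together with an iterated Lipschitz triangle inequality. Concretely, every $m \in M$ writes uniquely as $m = \sum_{j=1}^n s_j e_j$ with $s_j \in S_0$, so that $f(m) = \sum_{j=1}^n s_j f(e_j)$ in $N$. The hypothesis then provides
$$\max_{1 \leq j \leq n} |s_j|_S \;=\; \|(s_j)_{j=1}^n\|_{S,\infty} \;\leq\; C\,|m|_M,$$
which is the coordinatewise control needed to bound $f(m)$ in terms of $|m|_M$.

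First I would control the individual terms via the Lipschitz module axiom for $N$, giving $|s_j f(e_j)|_N \leq D_N \,|s_j|_S\, |f(e_j)|_N$ for each $j$. Setting $L := \max_j |f(e_j)|_N$, a quantity depending only on $f$ and the chosen basis, this yields $|s_j f(e_j)|_N \leq D_N L \, |s_j|_S$.

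Next, I would establish by induction on $n$ that the Lipschitz triangle inequality $|a-b|_N \leq C_N \max(|a|_N, |b|_N)$ iterates to a finite-sum version: pairing the $n$ summands via a balanced binary tree produces a constant $K_n$ (one may take $K_n = C_N^{\lceil \log_2 n \rceil}$) such that
$$\left| \sum_{j=1}^n x_j \right|_N \;\leq\; K_n \max_{1 \leq j \leq n} |x_j|_N \qquad \text{for every } (x_j)_{j=1}^n \in N^n.$$
Applying this with $x_j = s_j f(e_j)$ and combining with the previous two paragraphs gives
$$|f(m)|_N \;\leq\; K_n D_N L \max_j |s_j|_S \;\leq\; (K_n D_N L C)\,|m|_M,$$
so $f$ is Lipschitz with constant $K_n D_N L C$.

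There is no substantive obstacle: the only point worth flagging is that $K_n$ grows with $n$, so the argument genuinely uses the finite-rank hypothesis. The uniqueness of the coefficients $s_j$ is guaranteed by $(e_j)_{j=1}^n$ being an $S_0$-basis, and every other ingredient is either an axiom of a Lipschitz module or the hypothesis of the proposition.
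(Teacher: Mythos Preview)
Your proof is correct and follows essentially the same approach as the paper: expand in the basis, bound each term via the module axiom $|s_j f(e_j)|_N \leq D_N |s_j|_S |f(e_j)|_N$, iterate the Lipschitz triangle inequality to pass to a maximum, and then invoke the hypothesis $\|(s_j)\|_{S,\infty} \leq C|m|_M$. The only cosmetic difference is that the paper iterates the triangle inequality linearly to obtain the constant $C_N^{n-1}$, whereas your balanced binary tree yields the sharper $C_N^{\lceil \log_2 n \rceil}$; either suffices since only boundedness is at stake.
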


\begin{proof}
Put $C_E = \max{}_{j=1}^{n} |e_j|_M \in ]0,\infty[$ and $C_f = \max{}_{j=1}^{n} |f(e_j)|_N$. For any $(s_j)_{j=1}^{n} \in S_0^n$, we have
\begin{eqnarray*}
& & \left|f \left( \sum_{j=1}^{n} s_j e_j \right)\right|_N = \left|\sum_{j=1}^{n} s_j f(e_j)\right|_N \leq C_N^{n-1} \left\|(s_j f(e_j))_{j=1}^{n}\right\|_{N,\infty} \\
& = & C_N^{n-1} \left\|(|s_j f(e_j)|_N)_{j=1}^{n}\right\|_{{\infty}} \leq C_N^{n-1} \left\|(D_N |s_j|_S |f(e_j)|_N)_{j=1}^{n}\right\|_{{\infty}} \\
& \leq & C_N^{n-1} D_N C_f \left\|(|s_j|_S)_{j=1}^{n}\right\|_{{\infty}} = C_N^{n-1} D_N C_f \left\|(s_j)_{j=1}^{n}\right\|_{S,\infty} \\
& \leq & C_N^{n-1} D_N C_f C \left| \sum_{j=1}^{n} s_j e_j\right|_M.
\end{eqnarray*}
This implies that $f$ is of operator norm $\leq C_N^{n-1} D_N C_f C$.
\end{proof}

\subsection{Direct sums, tensor products and symmetric algebras}

Let $\Sigma$ denote either ``short'' or ``Lipschitz''.

\begin{proposition}
Let $R$ be a $\Sigma$ Banach halo and fix $p\geq p_R$.
Let $(M_i)_{i\in I}$ be a family of modules over $R$ such that $\sup_{i\in I}(p_{M_i})=p$.
Then the coproduct of the family in both $\SMod^p_R$ and $\SMod_R$, denoted $\bigoplus_{i\in I} M_i$ exists.
\end{proposition}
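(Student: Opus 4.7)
My plan is to take $\bigoplus_{i \in I} M_i$ to be the same object constructed in \S\ref{Direct sums}: the completion of the algebraic direct sum $\bigoplus_{i \in I} \underline{M}_i$ with respect to the seminorm $\|\cdot\|_{I,p}$. The hypothesis $\sup_{i \in I} p_{M_i} = p$ ensures that the parameter used to form $\|\cdot\|_{I,p}$ there agrees with the fixed $p$ in the proposition statement (recalling that on $[0,\infty]^2$ the supremum corresponds to the componentwise infimum, matching the Lipschitz definition $p = (\inf_i C_{M_i}, \inf_i D_{M_i})$). The first step is to verify that this object lies in $\SMod_R^p$: the $p$-triangular inequality on $\|\cdot\|_{I,p}$ is intrinsic to its definition, completeness holds by construction, and the $R$-module structure extends continuously from the algebraic direct sum. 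Hence the constant of $\bigoplus_i M_i$ equals $p$, which is $\geq p_R$.

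Next I would verify that the canonical injections $\iota_i : M_i \hookrightarrow \bigoplus_j M_j$ are morphisms in $\SMod_R^p$ and in $\SMod_R$. The constraint $p_{M_i} \leq p$ is immediate from the definition of $p$ as a supremum. Shortness $\|\iota_i(m)\|_{I,p} \leq |m|_{M_i}$ follows directly from the definition of $\|\cdot\|_{I,p}$: in the short case the $\ell^p$-norm of $\iota_i(m)$ contains only the single non-zero term $|m|_{M_i}$, and in the Lipschitz case the trivial binary tree with one leaf labelled $m$ realizes $|m|_{M_i}$ as an upper bound for the defining infimum.

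For the universal property, let $N$ be an object of $\SMod_R^p$ and let $(f_i : M_i \to N)_{i \in I}$ be short $R$-linear maps satisfying $p_{M_i} \leq p_N$. Define $\tilde f : \bigoplus_i \underline{M}_i \to N$ by the finite sum $\tilde f((m_i)) = \sum_i f_i(m_i)$. The key step is to show that $\tilde f$ is short with respect to $\|\cdot\|_{I,p}$ on the source. In the short case this reduces via Lemma \ref{p-norm-q-norm} applied to $N$ (using $p_N \geq p$) to
$$\left|\sum_i f_i(m_i)\right|_N \leq \|(|f_i(m_i)|_N)_i\|_{p_N} \leq \|(|f_i(m_i)|_N)_i\|_p \leq \|(|m_i|_{M_i})_i\|_p = \|(m_i)\|_{I,p}.$$
In the Lipschitz case I would argue by structural induction on a binary tree $t \in BT((m_i))$ representing the sum: the shortness of each $f_i$ together with the inequality $C_N \leq C_{M_i}$ (which is precisely $p_N \geq p$) yields $|\sum_i f_i(m_i)|_N \leq \||\cdot|_{\sqcup_j M_j} \circ t\|_{C_N}$, and taking the infimum over $t$ gives shortness. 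Once $\tilde f$ is short on the algebraic direct sum, it extends uniquely by continuity to the completion, providing the required factorization; uniqueness follows from density of the algebraic direct sum. Since $\SMod^p_R$ is a full subcategory of $\SMod_R$, the same universal property holds in $\SMod_R$. The only genuinely delicate point is the binary-tree estimate in the Lipschitz setting, but this is a routine induction once the recursive definition of $\|\cdot\|_{I,C}$ is unfolded.
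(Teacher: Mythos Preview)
Your argument is correct and fills in precisely the verification that the earlier proposition in \S\ref{Direct sums} stated without detail. The paper's own proof of the present proposition, however, takes a different and more categorical route: rather than re-verifying the universal property by hand, it realizes $\bigoplus_i M_i$ as the quotient
\[
\Res_R^{R_p}\,\ell^{p}\Bigl(\coprod_{i\in I} M_i,\,R_p\Bigr)\big/\sim
\]
of the free $p$-normed module on the disjoint union $\coprod_i M_i$ (viewed as a normed set) by the closure of the direct-sum relations, invoking Proposition~\ref{free-module-p}. The universal property then follows formally from the adjunction established there together with the usual algebraic description of a direct sum as a quotient of a free module. Your direct approach is more elementary and self-contained, avoiding the change-of-base machinery $R \to R_p$ of Lemma~\ref{enlarge-p}; the paper's approach is terser once that machinery is in place and makes the role of $p$ (as opposed to $p_R$) transparent through the passage to $R_p$.

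One small correction: your last sentence is not quite the right justification. That $\SMod_R^p$ is a full subcategory of $\SMod_R$ does not by itself imply that a coproduct in the former remains one in the latter. The correct observation is that any cocone $(f_i : M_i \to N)_{i\in I}$ in $\SMod_R$ forces $p_N \geq p_{M_i}$ for every $i$, hence $p_N \geq \sup_i p_{M_i} = p$, so $N$ already lies in $\SMod_R^p$ and the universal property there suffices.
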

\begin{proof}
One may write the direct sum as a quotient
$$
\bigoplus_{i\in I} M_i:=\Res_{R}^{R_p}\ell^{p}\left(\coprod_{i\in I} M_i,R_p\right)/\sim
$$
of the free $p$-normed module of Proposition \ref{free-module-p} by the closure of the appropriate direct sum relations.
This will fulfill the desired universal property.
\end{proof}

\begin{definition}
Let $(M_i)_{i\in I}$ be a finite ordered family of modules over a $\Sigma$ Banach halo $R$ such that $\sup(p_{M_i})=p$.
Then we define the tensor product $\bigotimes_{i\in I}M_i$ as the quotient
$$
\bigotimes_{i\in I} M_i=\Res_{R}^{R_p}\ell^{p}(\otimes_{m,i\in I} M_i,R_p)/\sim
$$
of the free $p$-module of Proposition \ref{free-module-p} on the multiplicative tensor product of the underlying normed sets
by the appropriate bilinearity relations. The corresponding constant is fixed to be $p$.
\end{definition}

\begin{proposition}
\label{universal-property-tensor-product}
Let $(M_i)_{i\in I}$ be a finite ordered family of modules over a 
$\Sigma$ Banach halo $R$ such that $\sup(p_{M_i})=p$.
Then the tensor product $\otimes_{i\in I} M_i$ represents the functor
$$\Multilin((M_i)_{i\in I},-):\Mod_R^p\to \NSets_\Sigma$$
that sends a $R$-module $M$ with $p_M\geq p$ to the set of $R$-multilinear $\Sigma$ maps $f:\otimes_{m,i\in I} M_i\to M$ equipped with the operator norm.
\end{proposition}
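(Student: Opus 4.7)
The plan is to assemble the universal property from the two universal constructions already in hand: the free module functor of Proposition \ref{free-module-p} and the quotient by the bilinearity relations. Concretely, suppose $f \colon \prod_{i \in I} M_i \to M$ is an $R$-multilinear $\Sigma$-map in the sense of the statement, where $M$ is an $R$-module with $p_M \geq p$. The key observation is that $\Multilin((M_i)_{i\in I},M)$ can be described in two stages: first the normed-set data of a short (resp.\ Lipschitz) map on the multiplicative product $\otimes_{m,i \in I} M_i$, whose norm at a tuple $(m_i)_i$ is $\prod_i |m_i|_{M_i}$; second the algebraic data expressing that this map is $R$-multilinear.

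First, I would use Proposition \ref{free-module-p} applied to the normed set $X := \otimes_{m,i\in I} M_i$ and the target module $M$ (viewed as an object of $\SMod_R^p$, which makes sense because $p_M \geq p$). The proposition yields a canonical bijection
$$
\Hom_{\SMod_R^p}\!\bigl(\Res_R^{R_p}\ell^{p}(X,R_p),\,M\bigr) \;\cong\; \Hom_{\NSets_\Sigma}(X,M),
$$
and moreover this bijection identifies the operator norm on the left-hand side with the best Lipschitz/short constant on the right-hand side. Thus an $R$-multilinear $\Sigma$-map $f$ produces first a short map $X \to M$ (short because $f$ is multilinear with operator norm bounded by the product of the $|m_i|_{M_i}$ factors), and then a unique short $R_p$-linear, hence short $R$-linear, extension $\tilde f \colon \Res_R^{R_p}\ell^{p}(X,R_p) \to M$, with the same operator norm as $f$.

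Second, I would show that $\tilde f$ factors through the quotient by the bilinearity relations defining $\bigotimes_{i\in I} M_i$. Since $f$ is $R$-multilinear, the generators of these relations (the differences $(m_i + m_i', m_{-i}) - (m_i, m_{-i}) - (m_i', m_{-i})$ and $(rm_i,m_{-i}) - r(m_i,m_{-i})$ for each slot $i$, where $m_{-i}$ denotes the remaining entries) map to $0$ in $M$. Since $M$ is complete and $\tilde f$ is continuous, the kernel of $\tilde f$ contains the closure of these relations, so $\tilde f$ descends to a short $R$-linear map $\bigotimes_{i\in I} M_i \to M$, still with the same operator norm. Conversely, pre-composition with the canonical $R$-multilinear map $\iota \colon \prod_i M_i \to \bigotimes_{i\in I} M_i$ sends a short $R$-linear map on $\bigotimes_{i\in I} M_i$ to an $R$-multilinear $\Sigma$-map on $\prod_i M_i$, and this operation is inverse to $f \mapsto \tilde f$ by uniqueness in the two universal properties above.

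The only slightly delicate point, and the main step to be careful about, is checking that the assignment $f \leftrightarrow \tilde f$ is an isometry for the operator norms on both sides, i.e.\ that the quotient norm on $\bigotimes_{i \in I} M_i$ induced from $\ell^p(X, R_p)$ is exactly what makes the above correspondence norm-preserving. This reduces to the standard fact that the free-module norm of a generator $\{(m_i)_i\} \in \ell^p(X,R_p)$ is $\prod_i |m_i|_{M_i}$, matching the operator norm bound for a multilinear map, and that taking the quotient by the bilinearity relations can only decrease operator norms, while the inverse direction (restricting from $\bigotimes_{i \in I} M_i$ back to $\prod_i M_i$) preserves the bound by the short-ness of $\iota$. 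Thus the functor $\Multilin((M_i)_{i\in I},-)$ on $\Mod_R^p$ is representable by $\bigotimes_{i \in I} M_i$ with the stated constant $p$.
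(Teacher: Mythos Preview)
Your proposal is correct and follows essentially the same approach as the paper: the paper's proof is a single sentence stating that the result follows from the free module construction (Proposition~\ref{free-module}) together with the classical construction of the tensor product of modules over rings, and you have simply spelled out those details (free module adjunction, then quotient by the multilinearity relations, with the operator-norm bookkeeping). If anything, your citation of Proposition~\ref{free-module-p} is more precise, since the tensor product is defined via $\Res_R^{R_p}\ell^{p}(-,R_p)$.
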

\begin{proof}
This follows from Proposition \ref{free-module} and the classical construction of tensor product of modules on rings.
\end{proof}

\begin{proposition}
The forgetful functor $F:\CAlg_R\to \SMod_R$ from commutative algebras over $R$ to the category of $R$-modules with short morphisms has a left adjoint called the symmetric algebra
functor and denoted $M\mapsto \Sym_R(M)$.
\end{proposition}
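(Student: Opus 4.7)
The plan is to construct $\Sym_R(M)$ as a quotient of the tensor algebra and verify the universal property against commutative $R$-algebras.

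First, I would form the tensor algebra $T_R(M):=\bigoplus_{n\geq 0} M^{\otimes n}$, where $M^{\otimes 0}=R$ and $M^{\otimes n}$ is the iterated tensor product over $R$ defined in the previous subsection, taking all tensor powers with the same constant $p=p_M$. Since $\sup_n p_{M^{\otimes n}}=p_M$, the direct sum construction applies and yields an $R$-module in $\SMod_R^{p_M}$. The multiplication $T_R(M)\otimes T_R(M)\to T_R(M)$ will be defined componentwise by the canonical concatenation maps $M^{\otimes i}\otimes M^{\otimes j}\to M^{\otimes (i+j)}$, which exist by associativity of $\otimes$ and whose universality as multilinear maps is given by Proposition \ref{universal-property-tensor-product}. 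A short check shows that these maps assemble into a short bilinear product on the direct sum (in the Lipschitz case one absorbs the constants $C_{M^{\otimes n}}$ uniformly into the bookkeeping pair $p$), making $T_R(M)$ into an $R$-algebra in the appropriate bimonoidal category.

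Second, I would pass to the commutative quotient. Let $I\subset T_R(M)$ be the closure of the two-sided ideal generated by all elements of the form $m\otimes n-n\otimes m$ with $m,n\in M\subset T_R(M)$. Set $\Sym_R(M):=T_R(M)/I$ with the quotient norm $|\bar{x}|:=\inf_{y\in I}|x-y|$. A standard verification shows that the quotient norm still satisfies the $p$-triangular inequality and submultiplicativity, and that completeness is preserved because we quotient by a closed ideal; the resulting object is a commutative $R$-algebra in the $\Sigma$ Banach halo sense, with the same constant $p=p_M$.

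For the adjunction, given a short $R$-linear map $\varphi\colon M\to A$ into a commutative $R$-algebra $A$ (so $p_A\geq p_M$ automatically), the universal properties of the tensor product (Proposition \ref{universal-property-tensor-product}) and of the direct sum yield a unique short $R$-algebra morphism $\tilde{\varphi}\colon T_R(M)\to A$ extending $\varphi$: on the $n$-th summand it is the $n$-linear map $(m_1,\dots,m_n)\mapsto \varphi(m_1)\cdots\varphi(m_n)$, whose operator norm is bounded by $1$ because $\varphi$ is short and the product in $A$ is short. Commutativity of $A$ forces $\tilde{\varphi}$ to vanish on the generators of $I$, hence on $I$ by continuity and $R$-bilinearity, so $\tilde{\varphi}$ descends uniquely to a short $R$-algebra morphism $\Sym_R(M)\to A$. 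This gives the required natural bijection $\Hom_{\CAlg_R}(\Sym_R(M),A)\simeq \Hom_{\SMod_R}(M,F(A))$.

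The main obstacle is the norm bookkeeping at each step: one must check that the concatenation product on $T_R(M)$ is actually short (respectively Lipschitz with uniformly controlled constants in the Lipschitz case), that the quotient norm on $\Sym_R(M)$ remains non-degenerate and submultiplicative, and that the operator bound on $\tilde{\varphi}$ really is $\leq 1$ at every level. All of these reduce to systematic application of the universal properties of the direct sum and tensor product proved above, together with Lemma \ref{p-norm-q-norm} to compare $p$-norms when $p_A>p_M$.
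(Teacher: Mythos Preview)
Your proposal is correct and follows essentially the same approach as the paper: both construct $\Sym_R(M)$ as the quotient of the tensor algebra $T_R(M)=\bigoplus_{n\in\N}M^{\otimes n}$ by the closed two-sided ideal generated by commutators $m\otimes n-n\otimes m$. The paper's proof is terser, phrasing the adjunction as a composition of the two adjoints (associative algebras to modules, then commutative to associative), while you unwind the universal property directly; but the content is the same.
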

\begin{proof}
One may describe explicitely the symmetric algebra by
$$
\Sym_R(M):=\bigoplus_{n\in \N}M^{\otimes n}/I_c
$$
where $I_c$ is the closed two sided ideal generated by commutators $x\otimes y-y\otimes x$ in the free associative
algebra $T_R(M):=\bigoplus_{n\in \N}M^{\otimes n}$ over $R$. This gives an adjoint because it is a composition
of two adjoint functors: the adjoint to the forgetful functor from commutative $R$-algebras to associative
$R$-algebras and the adjoint to the forgetful functor from associative $R$-algebras to $R$-modules.
\end{proof}

\subsection{Scalar extensions of modules}
\begin{proposition}
\label{extension-of-scalars}
Let $R\to S$ be a $\Sigma$-Banach halo morphism. Then the restriction of scalar functor $\Mod_S\to \Mod_R$
has an adjoint, denoted $\otimes_R S$, and called the scalar extension along $R\to S$.
\end{proposition}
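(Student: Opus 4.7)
The plan is to define $M \otimes_R S$ for an $R$-module $M$ by reusing the tensor product construction of the previous subsection applied to the pair $(M,S)$, where $S$ is viewed as an $R$-module by restriction of scalars along $R\to S$, and then to equip the resulting object with an $S$-module structure inherited from right multiplication on the second factor.

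First, given an $R$-module $(M,|\cdot|_M,p_M)$, I would view $S$ as an $R$-module with constant $p_S\geq p_R$; setting $p=\max(p_M,p_S)$ (or the componentwise min in the Lipschitz case, using the ordering on $]0,\infty]^2$ introduced above), I form the $R$-module $M\otimes_R S$ as defined in the previous proposition, with constant $p$. Step two is to define an $S$-module structure on it: the map $S\otimes^m (M\otimes^m S)\to M\otimes_R S$ sending $(s,m\otimes s')\mapsto m\otimes(ss')$ is $R$-bilinear in $(m,s')$ for each $s\in S$ and short with respect to the submultiplicativity of $|\cdot|_S$, hence by Proposition \ref{universal-property-tensor-product} it factors through $M\otimes_R S$ and yields, for each $s$, a short $R$-linear endomorphism of $M\otimes_R S$ of operator norm at most $|s|_S$. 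This assembles into a ring homomorphism $S\to \End_R(M\otimes_R S)$ and, by Proposition \ref{endomorphism ring}, equips $M\otimes_R S$ with the structure of a short $S$-module with constant $p$. (In the Lipschitz case one picks up the expected factor $D_S$; the constants remain compatible with $p_S$.)

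Step three is to verify the adjunction. Given an $S$-module $N$ with $p_N\geq p$ and a short $R$-linear map $f:M\to \mathrm{Res}_R^S N$, the map
\[
\tilde f_0:M\otimes^m S\longrightarrow N,\qquad (m,s)\longmapsto s\cdot f(m),
\]
is $R$-bilinear and short (submultiplicativity of the $S$-action together with shortness of $f$), so by Proposition \ref{universal-property-tensor-product} it extends uniquely to a short $R$-linear map $\tilde f:M\otimes_R S\to N$, which is manifestly $S$-linear on elementary tensors and hence on all of $M\otimes_R S$ by continuity and density. Conversely, any short $S$-linear $g:M\otimes_R S\to N$ restricts to $m\mapsto g(m\otimes 1)$, giving a short $R$-linear map $M\to \mathrm{Res}_R^S N$. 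These two assignments are inverse to each other (by $R$-bilinearity and density), producing the natural bijection
\[
\Hom_{\Mod_S}(M\otimes_R S,N)\;\cong\;\Hom_{\Mod_R}(M,\mathrm{Res}_R^S N).
\]

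The main obstacle I anticipate is the bookkeeping of the constants $p_M,p_S,p_N$ and the verification that the induced $S$-action is truly short (in the Lipschitz case: Lipschitz with the expected constants), so that $M\otimes_R S$ lies in the correct subcategory $\SMod_S^p$ and the extension $\tilde f$ is a bona fide morphism there; this reduces to a direct computation combining submultiplicativity of $|\cdot|_S$, Lemma \ref{p-norm-q-norm} for the passage between different $\ell^p$-type norms, and the universal property of Proposition \ref{universal-property-tensor-product} applied over the enlarged base $R_p$ of Lemma \ref{enlarge-p}. Everything else is the formal translation of the classical scalar extension construction into the Banach halo setting.
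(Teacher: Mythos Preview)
Your proposal is correct and follows precisely the paper's approach: define $M\otimes_R S$ via the tensor product construction, endow it with an $S$-module structure from multiplication on the second factor, and deduce the adjunction from Proposition \ref{universal-property-tensor-product}. The paper compresses all of this into two sentences, simply declaring that $M\otimes_R S$ ``seen as an $S$-module'' is the adjoint by the universal property of the tensor product; your write-up is a faithful unpacking of that sketch.
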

\begin{proof}
The scalar extension of an $R$-module $M$ by a morphism $R\to S$ is given by the tensor product
$M\otimes_R S$, seen as an $S$-module. Proposition \ref{universal-property-tensor-product} implies
that it is an adjoint of the restriction of scalars.
\end{proof}

\begin{example}
We abbreviate $\|\cdot\|_{(\R,|\cdot|_{\infty},q)}$ to $\|\cdot\|_{\infty,\ell^q}$ and $\|\cdot\|_{(\Q_p,|\cdot|_p,q)}$ to $\|\cdot\|_{p,\ell^q}$ for any $q \in [1,\infty)$. Let us consider the case where $\Sigma$ is ``short'', $R = (\Z,|\cdot|_{\infty},1)$,
$S = (\Z_p,|\cdot|_p,\infty)$, and
$M = (\Z^n,|\cdot|_{\infty,\ell^q},1)$ for a $q \in [1,\infty)$.
We have a natural $S$-linear isomorphism $M \otimes^{alg}_{R} S\cong \Z_p^n$, and hence we identify them.
Then $|\cdot|_{M \otimes_{R} S}$ is bounded by $|\cdot|_{p,\ell^{\infty}}$ by the definition of the left hand side
using the infimum. On the other hand, $|\cdot|_{p,\ell^{\infty}}$ is known to satisfy the axiom of a module
over $S$. Therefore for any $m \in M \otimes^{alg}_{R} S$ with a
presentation $\sum_{i=0}^{n} m_i \otimes f_i = m$, we have

$$|m|_{p,\ell^{\infty}} \leq \|(|m_i \otimes 1|_{p,\ell^{\infty}},|f_i|_p)_{i=0}^{n}\|_{{\infty}}
\leq \|(|m_i|_{\infty,\ell^1},|f_i|_p)_{i=0}^{n}\|_{{\infty}}
= \|(|m_i|_M |f_i|_{S})_{i=0}^{n}\|_{{\rho_{S}}}$$

Taking the infimum, we obtain $|m|_{p,\ell^{\infty}} \leq |m|_{M \otimes_{R} S}$.
Thus we obtain $|\cdot|_{p,\ell^{\infty}} = |\cdot|_{M \otimes_{R} S}$. Since $\Z_p^n$ is complete
with respect to $|\cdot|_{p,\ell^{\infty}}$, its completion is isomorphic to itself, so that we get
$$(\Z^n,|\cdot|_{\infty,\ell^q},1)\otimes_{(\Z,|\cdot|_{\infty},1)} (\Z_p,|\cdot|_p,\infty)\cong (\Z_p^n,|\cdot|_{p,\ell^\infty},\infty).$$
\end{example}

We can also apply the completely same argument to other norms $|\cdot|'$ on $\Z^n$, as long as it satisfies the following
two conditions:
\begin{enumerate}
\item The norm of a vector whose entries are 0 except for a single entry 1 is 1. (The condition is used to show
that $|\cdot|_{M \otimes_{R} S}$ is bounded by $|\cdot|_{p,\ell^{\infty}}$.)
\item $(\Z^n,|\cdot|') \to (\Z_p^n,|\cdot|_{p,\ell^{\infty}})$ is short. (This condition is used to
show $\|(|m_i \otimes 1|_{p,\ell^{\infty}},|f_i|_p)_{i=0}^{n}\|_{{\infty}} \leq \|(|m_i|_{\infty,\ell^1},|f_i|_p)_{i=0}^{n}\|_{{\infty}}$ 
above).
\end{enumerate}

\begin{example}
\label{scalar-extension-operator-norm}
In particular, the scalar extension of $M_n(\Z) \cong \Z^{n^2}$ equipped with the $\ell^2$-operator norm along
$(\Z,|\cdot|_\infty,1)\to (\Z_p,|\cdot|_p,\infty)$ is isomorphic to $M_n(\Z_p)$
equipped with the $\ell^{\infty}$ norm. Indeed, the first condition is clear, and the second follows from the following argument.
Let $A \in M_n$. We denote by $A_{i,j}$ the $(i,j)$-entry of $A$ for each $i,j \in \N$ with $1 \leq i,j \leq n$.
Then we have $A \delta_j =(A_{i,j})_{i=1}^{n}$ and hence
$|A|_{\ell^2,op} = |A|_{\ell^2,op} |\delta_j|_{\infty,\ell^2} \geq |(A_{i,j})_{i=1}^{n}|_{\infty,\ell^2} \geq |(A_{i,j})_{i=1}^{n}|_{\infty,\ell^{\infty}} $
(where $|\cdot|_{\infty,\ell^q}$ denotes $\|\cdot\|_{q}$) for
any $j \in \N$ with $1 \leq j \leq n$. Therefore we obtain $|A|_{p,\ell^{\infty}} \leq |A|_{\infty,\ell^{\infty}} \leq |A|_{\ell^2,op}$
under the natural identification of $M_n(\Z)$ and $\Z^{n^2}$.
\end{example}

\subsection{Finite colimits of algebras and converging power series}
Another good motivation for working with halos is given by the following result.
\begin{proposition}
Let $A$ be a commutative $\Sigma$ Banach halo. Then finite coproducts exist in the category of commutative $A$-algebras.
\end{proposition}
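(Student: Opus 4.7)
The plan is to construct the coproduct of two commutative $A$-algebras $B$ and $C$ as the tensor product $M := B \otimes_A C$ (already available as an $A$-module from the previous section), equipped with a natural multiplication extending the formula $(b_1 \otimes c_1)(b_2 \otimes c_2) = (b_1 b_2) \otimes (c_1 c_2)$, and to check its coproduct universal property. The constant of $M$ is $p := \sup(p_B, p_C) \geq p_A$ in the short case, and the analogous pointwise infimum pair in the Lipschitz case.

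To define the multiplication, I would apply Proposition \ref{universal-property-tensor-product} to the four-fold $A$-multilinear map
$$
B \otimes^m C \otimes^m B \otimes^m C \longrightarrow B \otimes_A C, \qquad (b_1, c_1, b_2, c_2) \longmapsto (b_1 b_2) \otimes (c_1 c_2),
$$
whose boundedness follows from the sub-multiplicativity of the norms in $B$ and $C$ (and in $B \otimes_A C$ on elementary tensors): in the short case one has $|(b_1 b_2)\otimes(c_1 c_2)| \leq |b_1|_B|b_2|_B|c_1|_C|c_2|_C$, and the analogous estimate with the multiplicative constants $D_B, D_C$ holds in the Lipschitz case. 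Using the canonical associativity isomorphism $B \otimes_A C \otimes_A B \otimes_A C \cong (B \otimes_A C) \otimes_A (B \otimes_A C)$, this induces the desired bilinear multiplication $\mu : M \otimes_A M \to M$. The unit is obtained by composing the unit $A \to A$ with the structure morphism $A \to M$ given by $a \mapsto a(1_B \otimes 1_C)$.

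Next, I would verify that $(M, \mu)$ is a commutative unital associative $A$-algebra in the $\Sigma$ sense. All the algebra axioms hold on the dense subspace spanned by elementary tensors, where they reduce directly to the axioms for $B$ and $C$, and extend to $M$ by continuity; the bound computed above shows sub-multiplicativity of the norm on $M$, with the appropriate constant in each setting. The canonical maps $\iota_B : b \mapsto b \otimes 1_C$ and $\iota_C : c \mapsto 1_B \otimes c$ are then short $A$-algebra morphisms.

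Finally, for the universal property, given a commutative $A$-algebra $D$ with $p_D \geq p$ and $A$-algebra morphisms $\phi : B \to D$, $\psi : C \to D$, the map $B \otimes^m C \to D$ sending $(b,c) \mapsto \phi(b)\psi(c)$ is $A$-bilinear and short (as $|\phi(b)\psi(c)|_D \leq |\phi(b)|_D |\psi(c)|_D \leq |b|_B |c|_C$, up to the Lipschitz constant in that case). By Proposition \ref{universal-property-tensor-product} it factors uniquely as a short $A$-linear map $M \to D$, which one checks on elementary tensors to be multiplicative and unital; uniqueness follows from density. The main technical obstacle is tracking constants carefully in the Lipschitz case, since the sub-multiplicativity constants of $B$, $C$, and $D$ must be compatible with those of the tensor product, and one must ensure that the multiplication defined on algebraic tensors is uniformly continuous so as to extend to the completion.
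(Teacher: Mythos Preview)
Your argument is correct and reaches the same conclusion as the paper, but the construction of the multiplication is organized differently. The paper does not build $\mu$ from the four-fold multilinear map directly; instead it first uses the extension-of-scalars adjunction to endow $B\otimes_A C$ with a $B$-module structure (from the left factor) and a $C$-module structure (from the right factor), obtaining a $(B,C)$-bimodule. These two actions combine into an $A$-bilinear map $B\otimes^m C\to \End_A(B\otimes_A C)$, which is then extended by the universal property of the tensor product to an $A$-linear map $B\otimes_A C\to \End_A(B\otimes_A C)$, i.e.\ a multiplication. The ring axioms are then checked, as you do, by continuity from the algebraic tensor product.

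Your direct route via the four-fold multilinear map is perfectly fine and arguably more transparent; the only extra ingredient you invoke is the associativity identification $B\otimes_A C\otimes_A B\otimes_A C\cong (B\otimes_A C)\otimes_A(B\otimes_A C)$, which is available. The paper's route has the virtue of reusing the already-established endomorphism-ring and scalar-extension machinery, so the constant bookkeeping in the Lipschitz case is absorbed into those earlier results rather than redone by hand. Neither the paper nor you spells out the coproduct universal property in detail; your sketch of it is adequate.
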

\begin{proof}
Let $B$ and $C$ be two $A$-algebras.
The $(B,C)$-bimodule structure on $B \otimes C$ is given by Proposition \ref{extension-of-scalars}  applied to the pair
of $B$ and the underlying $A$-module structure of $C$, and the pair of $C$ and the underlying $A$-module structure
of $B$, because of the symmetry of $\otimes$. It gives an $A$-bilinear map $B \otimes_m C \to \End_A(B \otimes C)$,
which extends to $B \otimes C$ as an $A$-module homomorphism by Proposition \ref{universal-property-tensor-product}.
Thus $B \otimes C$ is equipped with a multiplication.
The ring axiom immediately follows from the continuity of the operations and the ring axiom of the image of algebraic
tensor product.
\end{proof}

\begin{corollary}
Let $A$ be a commutative $\Sigma$ Banach halo.
Finite colimits exist in the category $\CBanhalos_A$ of $A$-algebras.
\end{corollary}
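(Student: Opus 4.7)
The plan is to reduce the statement to the existence of an initial object, binary coproducts, and coequalizers, since any finite colimit in a category can be built from these three. The previous proposition already gives us binary coproducts, which by induction yields all finite (nonempty) coproducts.

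First I would identify $A$ itself as the initial object of $\CBanhalos_A$: for any commutative $A$-algebra $B$ with structural morphism $\alpha \colon A \to B$, $\alpha$ is the unique $A$-algebra morphism from $A$ to $B$, and it is automatically a $\Sigma$ morphism by assumption. This handles the empty colimit.

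Next I would construct coequalizers. Given two parallel $A$-algebra morphisms $f, g \colon B \to C$, let $J \subset C$ be the two-sided ideal generated by the set $\{f(b) - g(b) \mid b \in B\}$, and let $I$ be its closure in $C$ for the topology induced by $|\cdot|_C$. The quotient $C/I$, equipped with the quotient seminorm $|\bar{c}|_{C/I} := \inf_{x \in I} |c - x|_C$ and constant $p_C$, is the candidate coequalizer. The axioms of a $\Sigma$ Banach halo pass to the quotient: the $p$-triangular inequality (resp.\ $C$-triangular inequality) is preserved by infima, sub-multiplicativity is preserved since $I$ is an ideal, and completeness follows from $I$ being closed. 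The canonical projection $C \to C/I$ is a short $A$-algebra morphism equalizing $f$ and $g$, and its universal property is immediate from the algebraic quotient universal property together with the continuity-by-shortness observation.

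Finally I would combine these: for a finite diagram $D \colon \Jc \to \CBanhalos_A$, form the coproduct $\bigsqcup_{j \in \Ob(\Jc)} D(j)$ using the previous proposition, then take the coequalizer of the two natural morphisms induced by the domain and codomain assignments on the arrows of $\Jc$ (using coproducts of arrows in $\CBanhalos_A$, again available from the previous proposition). This is the standard presentation of a colimit via coproducts and coequalizers, and its universal property transfers verbatim from the set-theoretic situation.

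The main conceptual point to verify is that the quotient construction in the coequalizer step lands in $\CBanhalos_A$; this is where one must check that the quotient seminorm is actually a norm, which holds precisely because we divided by the \emph{closed} ideal $I$ rather than $J$ itself. This is the only nontrivial step; everything else is formal from the previous proposition and the universal properties already established.
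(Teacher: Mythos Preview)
Your proposal is correct and follows essentially the same approach as the paper: construct coequalizers as the quotient by the closure of the ideal generated by $\{f(b)-g(b)\mid b\in B\}$, invoke the previous proposition for finite coproducts, and conclude via the standard reduction of finite colimits to coproducts and coequalizers. The paper's proof is terser (it does not mention the initial object or elaborate on why the quotient seminorm is a norm), but the content is the same.
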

\begin{proof}
The coequalizer of a pair $f,g:R\to S$ of parallel arrows is defined as the quotient of $S$
by the closure of the ideal generated by $\{f(x)-g(x),\; x\in R\}$. It fulfills the universal property
of the coequalizer. Since finite coproducts exist in the category of $A$-algebras,
we get the desired result.
\end{proof}

\begin{proposition}
Let $A$ be a commutative $\Sigma$ Banach halo.
The adjoint to the forgetful functor $\Alg_A\to \NSets_\Sigma$ exists and is denoted $X\mapsto A\langle X\rangle$.
The $A$-algebra $A\langle X\rangle$ is called the $A$-algebra of converging power series on the normed
set $X$.
\end{proposition}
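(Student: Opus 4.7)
The plan is to obtain $X \mapsto A\langle X\rangle$ as a composition of two left adjoints that are already available. By Proposition \ref{free-module-p}, the forgetful functor $\SMod_A \to \NSets_\Sigma$ admits as left adjoint the free module functor $X \mapsto \ell^{p_A}(X,A)$. By the symmetric algebra proposition, the forgetful functor $\CAlg_A \to \SMod_A$ admits $\Sym_A$ as left adjoint. Composing, the assignment
\[
A\langle X\rangle := \Sym_A\bigl(\ell^{p_A}(X,A)\bigr)
\]
is left adjoint to the composed forgetful functor $\CAlg_A \to \NSets_\Sigma$. In the non-commutative variant one replaces $\Sym_A$ by the tensor algebra $T_A(M) = \bigoplus_{n\in\N} M^{\otimes n}$ (already used in the proof of the symmetric algebra proposition), giving the adjoint to the forgetful functor from associative $A$-algebras to $\NSets_\Sigma$.

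The only thing to check is that the composition of two left adjoints is a left adjoint, which is formal: for any $A$-algebra $B$ and any short (resp.\ Lipschitz) map $f \colon X \to B$ of normed sets, $f$ extends uniquely to a short (resp.\ Lipschitz) module morphism $\tilde f \colon \ell^{p_A}(X,A) \to B$ by Proposition \ref{free-module-p}, and $\tilde f$ extends uniquely to an $A$-algebra morphism $\Sym_A(\ell^{p_A}(X,A)) \to B$ by the universal property of $\Sym_A$. The naturality of the two bijections in $B$ and in $X$ composes to a natural bijection
\[
\Hom_{\Alg_A}(A\langle X\rangle, B) \cong \Hom_{\NSets_\Sigma}(X, B).
\]

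The "converging power series" terminology is justified by unwinding the explicit construction. An element of $A\langle X\rangle$ is a limit, in the quotient of $\bigoplus_n \ell^{p_A}(X,A)^{\otimes n}$ by the commutator ideal (or the full tensor algebra in the associative case), of expressions $\sum_{\alpha} a_\alpha\, x^\alpha$ where $\alpha$ runs over (symmetric, in the commutative case) multi-indices in $X$, and whose norm is controlled by an $\ell^{p_A}$-condition expressing that $|a_\alpha|_A \cdot \prod_i |x_{\alpha_i}|_X$ is summable in the appropriate sense. Thus $A\langle X\rangle$ plays for $(A,X)$ the role that the Tate algebra plays in classical non-archimedean analytic geometry, but now fits into the $\R_+^*$-flow on Banach halos thanks to the flexibility of the $\ell^{p}$-triangular inequality. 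No step is genuinely difficult; the real work was already done in constructing the free module $\ell^{p_A}(X,A)$ (where one uses Lemma \ref{enlarge-p} to enlarge $p_A$ when needed) and the symmetric algebra $\Sym_A$, so the present statement is a formal consequence of general adjoint-composition.
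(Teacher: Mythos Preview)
Your proof is correct and is precisely the second description the paper itself offers: after giving the direct construction $A\langle X\rangle:=\ell^{p_A}((X)^\N_m,A)$ via the free multiplicatively normed monoid $(X)^\N_m$, the paper adds that ``one may also use the composition of the symmetric algebra functor and of the free module construction to describe the same algebra.'' The only differences are presentational: the paper leads with the monoid description (which makes the power-series shape immediate) and leaves the adjoint-composition as a remark, whereas you take the composition as primary; also note that the statement concerns $\Alg_A$, so the tensor-algebra variant you mention is the one that literally matches, with $\Sym_A$ covering the commutative case.
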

\begin{proof}
Let $X$ be a normed set and $(X)^\N_m$ be the corresponding free multiplicatively normed monoid.
Then
$$A\langle X\rangle:=\ell^{p_A}((X)^\N_m,A)$$
is an $A$-algebra that fulfills the desired universal property.
One may also use the composition of the symmetric algebra functor
and of the free module construction to describe the same algebra.
\end{proof}

\section{Short isometries and classical compact groups}

\subsection{Short isometries of Lipschitz coalgebras}
\label{Short isometries of Lipschitz coalgebras}
Let $R$ be a Lipschitz Banach halo. 
In this section, we abbreviate the tensor product $\otimes_R$ of Lipschitz $R$-modules, which naturally makes sense by extending the construction for modules over a short Banach halo using free modules, to $\otimes$ as long as there is no ambiguity.

\begin{definition}
A monoid (resp.\ comonoid) with involution $(C,\sigma)$ over $R$ is a pair composed 
of a monoid (resp.\ comonoid) object in the
monoidal category of Lipschitz $R$-modules and of an
involution $\sigma$ 
that is a Lipschitz monoid (resp.\ comonoid) morphism $C \to C^{op}$.
\end{definition}

For an $R$-module $M$, we denote by $\uHom_R(M,R)$ the $R$-linear dual of $M$, i.e.\ the $R$-module given as the subset of $\uHom(M,R)$ consisting of Lipschitz $R$-linear homomorphisms equipped with the operator norm $|\cdot|_{\uHom(M,R)} = |\cdot|_{op}$ and the constant $p_{\uHom_R(M,R)} = p_M$. We abbreviate $\uHom_R(M,R)$ to $M^{\vee}$ unless there is no ambiguity of $R$. Then we obtain a contravariant functor $\vee \colon \Mod(R) \to \Mod(R), \ M \mapsto M^{\vee}$.

\begin{proposition}
\label{dual of comonoid}
Let $(C,\sigma)$ be a comonoid with involution over $R$. Then $(C^\vee,\sigma^{\vee})$
is a monoid with involution over $R$.
\end{proposition}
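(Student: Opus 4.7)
The plan is to construct the monoid structure on $C^\vee$ by transposing the comonoid structure of $C$ through the duality functor $(-)^\vee$. The main ingredient is a natural Lipschitz $R$-linear pairing map
$$\mu_{M,N} \colon M^\vee \otimes N^\vee \to (M \otimes N)^\vee$$
for Lipschitz $R$-modules $M,N$, sending elementary tensors $f \otimes g$ to the linear form $m \otimes n \mapsto f(m)g(n)$. The underlying map $M^\vee \times N^\vee \to (M \otimes N)^\vee$ is $R$-bilinear and Lipschitz, with a bound controlled by the operator norms and the multiplicative constant of the halo structure; by Proposition \ref{universal-property-tensor-product}, it factors uniquely through $M^\vee \otimes N^\vee$ and extends to the completion.

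Writing the comonoid structure of $C$ as $\Delta \colon C \to C \otimes C$ (coassociative) and $\epsilon \colon C \to R$ (counit), I will set
$$m_{C^\vee} := \Delta^\vee \circ \mu_{C,C} \colon C^\vee \otimes C^\vee \to C^\vee, \qquad u_{C^\vee} := \epsilon^\vee \colon R \cong R^\vee \to C^\vee,$$
using the canonical identification $R \cong R^\vee$ sending $1$ to the identity functional. Associativity of $m_{C^\vee}$ will follow from coassociativity of $\Delta$ via a diagram chase based on the naturality of $\mu$ and the compatibility $\mu_{M \otimes N, P} \circ (\mu_{M,N} \otimes \id_{P^\vee}) = \mu_{M, N \otimes P} \circ (\id_{M^\vee} \otimes \mu_{N,P})$ modulo the associator of the tensor product. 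The unit axioms for $u_{C^\vee}$ dualize the counit axioms in the same fashion. The resulting structure respects the $R$-module operations and Lipschitz constants by construction, so $(C^\vee, m_{C^\vee}, u_{C^\vee})$ is a monoid in the monoidal category of Lipschitz $R$-modules.

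For the involution, the map $\sigma^\vee \colon C^\vee \to C^\vee$ is Lipschitz and squares to the identity because $\sigma$ does. Since $\sigma$ is a comonoid morphism $C \to C^{op}$, one has the identity $(\sigma \otimes \sigma) \circ \Delta = s \circ \Delta \circ \sigma$, where $s \colon C \otimes C \to C \otimes C$ is the swap. Dualizing this identity and combining it with the naturality of $\mu_{C,C}$ in both arguments together with the compatibility of $s^\vee$ with the swap on $C^\vee \otimes C^\vee$ via $\mu_{C,C}$, one obtains $\sigma^\vee \circ m_{C^\vee} = m_{C^\vee} \circ s_{C^\vee, C^\vee} \circ (\sigma^\vee \otimes \sigma^\vee)$, which is precisely the statement that $\sigma^\vee$ is a monoid morphism $C^\vee \to (C^\vee)^{op}$.

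The main obstacle will be checking carefully that the pairing $\mu_{M,N}$ is well-defined, Lipschitz, and has constants compatible with the halo structure, so that all composites, in particular $m_{C^\vee}$, meet the bounds required in the definition of a Lipschitz $R$-module morphism. Once this bookkeeping is in place, the rest of the verification is a routine diagrammatic dualization argument, formally identical to the one showing that the linear dual of a coalgebra over a commutative ring is an algebra.
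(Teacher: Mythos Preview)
Your proposal is correct and follows essentially the same approach as the paper: both construct the multiplication on $C^\vee$ by first producing the canonical pairing map $C^\vee \otimes C^\vee \to (C \otimes C)^\vee$ (the paper obtains it from the evaluation $C \otimes C^\vee \to R$, you from the bilinear form $(f,g) \mapsto (m \otimes n \mapsto f(m)g(n))$, which is the same map) and then composing with $\Delta^\vee$. Your write-up is in fact more detailed than the paper's, which omits the explicit verification of associativity, the unit axiom, and the compatibility of $\sigma^\vee$ with the multiplication that you spell out.
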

\begin{proof}
For every $R$-module $M$, there is a natural evaluation homomorphism
$$M\otimes M^\vee\to R.$$
This gives an $R$-linear morphism
$$C^\vee\otimes C^\vee\otimes C\otimes C\to R.$$
This induces an $R$-linear morphism
$$C^\vee\otimes C^\vee\to (C\otimes C)^\vee.$$
Composing it with the dual of the comultiplication on $C$, we get a multiplication
$$C^\vee\otimes C^\vee\to C^\vee,$$
for which $C^\vee$ forms a monoid object for $\otimes$. This is equivalent to an $R$-algebra
structure on $A$.
\end{proof}

\begin{remark}
\label{dual of monoid}
We note that the dual statement of Proposition \ref{dual of comonoid} is not necessarily true. In other words, the dual of a monoid with involution $(A,\sigma)$ might not necessarily a comonoid with involution, as the dual of the multiplication $A \otimes A \to A$ is a morphism $A^{\vee} \to (A \otimes A)^{\vee}$ which might not factor through the natural morphism $A^{\vee} \otimes A^{\vee} \to (A \otimes A)^{\vee}$.
\end{remark}

Let $(C,\sigma)$ be a comonoid with involution over $R$.
For a commutative $R$-algebra $S$ and $(c,a) \in C \times (C^{\vee} \otimes S)$, we abbreviate to $a(c) \in S$ the image of $c \otimes a \in C \otimes C^{\vee} \otimes S$ by the scalar extension by $S$ of the pairing $C \otimes C^{\vee} \to R$ associated to the $R$-bilinear map $C \otimes^m C^{\vee} \to R, (c,f) \mapsto f(c)$ by the universality of $\otimes$.

\begin{definition}
We denote by $\CAlg_R$ be the category of commutative $R$-algebras
and bounded $R$-algebra homomorphisms, and define the functor of
short isometries of $(C,\sigma)$, which will be denoted by $\SIso(C,\sigma)$, by 
$$
\begin{array}{rcl}
\CAlg_R   & \to       & \Grp\\
                 S         &\mapsto    &
                \left\{a \in C^{\vee} \otimes S\:
                \begin{array}{|l}
                \exists D \in ]0,\infty[, \forall n \in \N,
                \forall (c_i)_{i=0}^{n-1} \in C^n,\\
                |a(c_0) \cdots a(c_{n-1})|_S \leq D |c_0|_C \cdots |c_{n-1}|_C, \\
                a \sigma_S(a) = \sigma_S(a) a = 1
                \end{array}\right\}
\end{array}
$$
\end{definition}

For a comonoid $C$ in $\Mod_R$ and an $R$-algebra $S$, we denote by $\alpha_{C,S}$ the Lipshictz $S$-linear homomorphism
$$C^{\vee}\otimes_R S = \uHom_R(C,R) \otimes_R S \to \uHom_S(C\otimes_R S,S)$$
assigning to each $a \in C^{\vee} \otimes_R S$ the Lipschitz $S$-linear homomorphism given as the $S$-linear extension of the evaluation map $C \to S, \ c \mapsto a(c)$.

\begin{proposition}
\label{representability-short-isometries-comonoid}
Let $(C,\sigma)$ be a comonoid with involution over $R$ and suppose that for every $R$-algebra $S$, $\alpha_{C,S}$ is an isomorphism.
Then the group of short isometries $\SIso(C,\sigma)$ is represented by the algebra
$$
\Ac(\SIso(C,\sigma)):=\Sym_R(C)/\overline{(\{\nabla((1 \otimes \sigma)(\Delta(c)))-\eta(c) \mid c \in C\})}.
$$
\end{proposition}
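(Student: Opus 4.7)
The plan is to establish a natural bijection
$$\Hom_{\CAlg_R}(\Ac(\SIso(C,\sigma)),S)\;\cong\;\SIso(C,\sigma)(S)$$
for every commutative $R$-algebra $S$, and to conclude by Yoneda. I will proceed in three stages: first, identify bounded $R$-algebra homomorphisms $\Sym_R(C)\to S$ with elements of $C^\vee\otimes_R S$ satisfying the multiplicative boundedness condition in the definition of $\SIso$; second, translate the unitarity relations $a\sigma_S(a)=\sigma_S(a)a=1$ into the vanishing on the generators of the closed ideal defining $\Ac(\SIso(C,\sigma))$; third, glue the two via the universal property of the quotient.

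For the first stage, the hypothesis that $\alpha_{C,S}$ is an isomorphism, combined with restriction-of-scalars, identifies $C^\vee\otimes_R S$ with the $R$-module $\uHom_R(C,S)$ of Lipschitz $R$-linear maps. Given such an $a$, the universal property of the tensor algebra produces an $R$-algebra homomorphism $\bigoplus_n C^{\otimes n}\to S$ on the algebraic level, which factors through $\Sym_R(C)$ since $S$ is commutative. The uniform estimate $|a(c_0)\cdots a(c_{n-1})|_S\leq D\,|c_0|_C\cdots|c_{n-1}|_C$ from the definition of $\SIso$ is precisely what guarantees that this algebraic extension is bounded, with operator norm controlled by $D$, and therefore extends continuously to the completion $\Sym_R(C)$; conversely, restricting a bounded algebra homomorphism to $C\subset\Sym_R(C)$ yields an element of $C^\vee\otimes_R S$ fulfilling the same multiplicative bound.

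For the second stage, the multiplication on $C^\vee\otimes_R S$ is the convolution inherited from Proposition~\ref{dual of comonoid}: writing $\Delta(c)=\sum c_{(1)}\otimes c_{(2)}$ in Sweedler notation, one computes
$$(a\sigma_S(a))(c)\;=\;\sum a(c_{(1)})\,a(\sigma(c_{(2)}))\;=\;\tilde a\bigl(\nabla((1\otimes\sigma)(\Delta(c)))\bigr),$$
where $\tilde a\colon\Sym_R(C)\to S$ is the extension from stage one, while the unit of $C^\vee\otimes_R S$ corresponds to the map $c\mapsto\eta(c)=\tilde a(\eta(c))$. Hence $a\sigma_S(a)=1$ is equivalent to $\tilde a$ vanishing on every generator $\nabla((1\otimes\sigma)(\Delta(c)))-\eta(c)$, and by continuity of $\tilde a$ this vanishing automatically extends to the closed ideal they generate. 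Commutativity of $\Sym_R(C)$ then forces $\sigma_S(a)a=1$ as well, since $c_{(1)}\sigma(c_{(2)})$ and $\sigma(c_{(2)})c_{(1)}$ have the same image in $\Sym_R(C)$, giving the two-sided unitarity condition for free.

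The main obstacle will be the careful verification in stage one that the multiplicative boundedness of $a$ is genuinely equivalent to boundedness of $\tilde a$ with respect to the halo norm on $\Sym_R(C)$, which is itself defined as a completed quotient of $\bigoplus_n C^{\otimes n}$. One must control both directions: that the norm of $c_0\cdots c_{n-1}$ in $\Sym_R(C)$ is bounded above by the product of the norms $|c_i|_C$, so that the bound with constant $D$ yields a genuine operator bound; and conversely that the infimum-defined direct-sum norm on $\bigoplus_n C^{\otimes n}$ dominates the norm of pure products. This comparison requires tracking the Lipschitz constants $C_R$ and $D_R$ through the tensor product and symmetric algebra constructions, which is where the main technical work lies.
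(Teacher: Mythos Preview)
Your three-stage plan is exactly the paper's approach: pass from algebra maps $\Ac(\SIso(C,\sigma))\to S$ to Lipschitz linear maps $f:C\to S$ via the universal property of $\Sym_R$, identify $f$ with an $a\in C^\vee\otimes_R S$ via $\alpha_{C,S}$, and then run a Sweedler computation to match the quotient relation with $a\sigma_S(a)=1$. You are in fact more explicit than the paper about why the multiplicative bound $|a(c_0)\cdots a(c_{n-1})|_S\le D\prod|c_i|_C$ is exactly what is needed for $\tilde a$ to extend boundedly to the completed $\Sym_R(C)$; the paper asserts the correspondence without isolating this point.

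One caution on your second stage: the claim that commutativity of $\Sym_R(C)$ forces $\sigma_S(a)a=1$ is not justified by the reason you give. Commutativity yields $\sum c_{(1)}\sigma(c_{(2)})=\sum\sigma(c_{(2)})c_{(1)}$ in $\Sym_R(C)$, but $(\sigma_S(a)a)(c)=\sum a(\sigma(c_{(1)}))a(c_{(2)})$ corresponds to $\nabla((\sigma\otimes 1)\Delta(c))=\nabla((1\otimes\sigma)\tau\Delta(c))$, and $\tau\Delta(c)$ is not of the form $\Delta(c')$ unless $C$ is cocommutative. The paper's own proof also checks only $a\sigma_S^\vee(a)=1$ and is silent on the other side, so this is a shared lacuna rather than a flaw specific to your write-up; in the applications of the paper $C^\vee\otimes S$ is a matrix algebra over $S$ and one-sided inverses are two-sided, but for the general statement you should either add the symmetric generators $\nabla((\sigma\otimes 1)\Delta(c))-\eta(c)$ to the ideal or supply a separate argument.
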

\begin{proof}
The comonoid structure $(\Delta,\eta)$ (resp. the involution $\sigma$) on $C$ induces a comultiplication
and a counit (resp. an involution $\sigma$) on the tensor $R$-algebra $(T_R(C),\nabla)$ given by
$$T_R(C)=\bigoplus_{n\in N}C^{\otimes n}$$
by the functoriality of $\oplus$ and $\otimes$, the universality of $\oplus$, and the symmetry of $\otimes$.
Moreover, $T_R(C)$ satisfies the axioms of a bimonoid expect for the coassociativity with respect to the
comultiplication by construction. Those operations induce operations on $\Sym_R(C)$ and $\Ac(\SIso(C,\sigma))$, for which $\Sym_R(C)$ forms a bimonoid and $\Ac(\SIso(C,\sigma))$ forms a Hopf monoid.

Let $S$ be a commutative $R$-algebra. Then every $R$-algebra homomorphism
$\Ac(\SIso(C,\sigma))\to S$ corresponds to an $R$-linear homomorphism $f:C\to S$ such that
$$
\nabla((f\otimes f)((1\otimes \sigma)(\Delta(c))))=\eta(c)
$$
for any $c\in C$. We denote $f_S:C\otimes S\to S$ the scalar extension of $f$ to $S$.
By hypothesis, $f_S$ can be represented as $\alpha_{C,S}(a)$ for a unique $a\in C^\vee\otimes_R S$.
By definition, the image of $c\otimes a$ and $c\otimes \sigma_S^\vee(a)$ by
the natural pairing
$$C\otimes C^\vee\otimes S\to R\otimes S\cong S$$
coincides with $f(c)$ and $f(\sigma(c))$ respectively for any $c\in C$. We now show that
$a\in \SIso(C,\sigma)(S)$.

The converging infinite sums may be permuted in a given halo.
Take a presentation $\sum_{h\in H} a_h \otimes t_h$ of $a$. Let $c \in C \otimes S$. Take a presentation $\sum_{i\in I} c_i \otimes s_i$ of $c$. For each $i\in I$, take a presentation $\sum_{j\in J_i} c_{i,j,1} \otimes c_{i,j,2}$ of $\Delta(c)$. We have
\begin{eqnarray*}
& & \iota_S(a \sigma^{\vee}_S(a) - 1)(c) = \iota_S(a \sigma^{\vee}_S(a) - 1) \left( \sum_{i\in I} c_i \otimes s_i \right) \\
& = & \sum_{h_1,h_2\in H} \sum_{i\in I} \iota_S(a_{h_1} \sigma^{\vee}(a_{h_2}) \otimes t_{h_1} t_{h_2} - 1)(c \otimes s_i) \\
& = & \sum_{h_1,h_2\in H} \sum_{i\in I} \sum_{j\in J_i} \iota_S(a_{h_1} \sigma^{\vee}(a_{h_2}) \otimes t_{h_1} t_{h_2})(c_{i,j,1} \otimes c_{i,j,2} \otimes s_i)\\
& & - \sum_{i\in I} \iota_S(\eta \otimes 1)(c \otimes s_i) \\
& = & \sum_{h_1,h_2\in H} \sum_{i\in I} \sum_{j\in J_i} a_{h_1}(c_{i,j,1}) \sigma^{\vee}(a_{h_2})(c_{i,j,2}) t_{h_1} t_{h_2}s_i
- \sum_{i\in I} \eta(c) s_i \\
& = & \sum_{h_1,h_2\in H} \sum_{i\in I} \sum_{j\in J_i} a_{h_1}(c_{i,j,1}) a_{h_2}(\sigma(c_{i,j,2})) t_{h_1} t_{h_2}s_i
- \sum_{i\in I} \eta(c) s_i\\
& = & \sum_{i\in I} s_i \sum_{j\in J_i} \left( \sum_{h_1\in H} a_{h_1}(c_{i,j,1}) t_{h_1} \right) \left( \sum_{h_2\in H} a_{h_2}(\sigma(c_{i,j,2})) t_{h_2} \right)
- \sum_{i\in I} \eta(c) s_i \\
& = & \sum_{i\in I} s_i \sum_{j\in J_i} \iota_S(a)(c_{i,j,1} \otimes 1) \iota_S(a)(\sigma(c_{i,j,2}) \otimes 1) - \sum_{i\in I} \eta(c) s_i \\
& = & \sum_{i\in I} s_i \sum_{j=1}^{m_i} f_S(c_{i,j,1} \otimes 1) f_S(\sigma(c_{i,j,2}) \otimes 1) - \sum_{i\in I} \eta(c) s_i \\
& = & \sum_{i\in I} s_i \sum_{j\in J_i} f(c_{i,j,1}) f(\sigma(c_{i,j,2})) - \sum_{i\in I} \eta(c) s_i \\
& = & \sum_{i\in I} s_i \nabla((1 \otimes \sigma(\Delta(c))) - \sum_{i\in I} \eta(c) s_i \\
& = & \sum_{i\in I} s_i \eta(c) - \sum_{i\in I} \eta(c) s_i \\
& = & 0,
\end{eqnarray*}
and hence $\iota_S(a \sigma^{\vee}_S(a) - 1) = 0$. Since $\iota_S$ is injective, we obtain $a \sigma^{\vee}_S(a) - 1 = 0$, i.e.\ $a \in \SIso(C,\sigma)$. The opposite implication follows from the same computation.
\end{proof}

A Lipschitz Banach halo $S$ is said to be {\it with submultiplicative norm} if $|f g|_S \leq |f|_S |g|_S$ for any $(f,g) \in S^2$, and is said to be {\it uniform} if $|f^n|_S = |f|_S^n$ for any $(f,n) \in S \times \N$. For example, if $D_R \leq 1$, then every $R$-algebra $S$ is with submultiplicative norm by $D_S \leq D_R = 1$.

\begin{proposition}
\label{inclusion of SIso}
For any commutative $R$-algebra $S$ with submultiplicative norm,
$$\{a \in C^{\vee} \otimes S \mid \forall c \in C, |a(c)|_S \leq |c|_C, a \sigma_S(a) = \sigma_S(a)a = 1\}$$
is contained in $\SIso(C,\sigma)(S)$.
\end{proposition}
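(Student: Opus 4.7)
The plan is to verify directly that any element $a$ in the set on the left-hand side satisfies the two defining conditions of $\SIso(C,\sigma)(S)$. The involutivity relations $a\sigma_S(a) = \sigma_S(a)a = 1$ appear verbatim in both sets, so the only nontrivial content of the statement is to exhibit a constant $D\in\,]0,+\infty[\,$ such that
$$
|a(c_0)\cdots a(c_{n-1})|_S \;\leq\; D\cdot |c_0|_C \cdots |c_{n-1}|_C
$$
holds for every $n\in\N$ and every $(c_i)_{i=0}^{n-1}\in C^n$, the product on the left being taken in the $S$-algebra $C^{\vee}\otimes S$ evaluated at the $c_i$ (equivalently, the product in $S$ of the $a(c_i)$, since the pairing is $S$-algebra compatible in the sense built into the definition of the monoid structure on $C^{\vee}$ via Proposition \ref{dual of comonoid}).

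The key observation is that the submultiplicativity hypothesis on $S$ allows one to dominate the product in $S$ by the product of the individual factor norms. For $n\geq 1$, I would argue by induction on $n$, invoking $|fg|_S\leq |f|_S\cdot|g|_S$ at each step, to obtain
$$
|a(c_0)\cdots a(c_{n-1})|_S \;\leq\; |a(c_0)|_S\cdots |a(c_{n-1})|_S.
$$
The pointwise bound $|a(c_i)|_S\leq |c_i|_C$ coming directly from the hypothesis on $a$ then yields the required inequality with constant $D=1$.

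The only remaining case is the empty product $n=0$, where the inequality reduces to $|1_S|_S\leq D$. If $S\neq\{0\}$ the Banach halo axioms force $|1_S|_S=1$, so $D=1$ is admissible; if $S=\{0\}$ then $C^{\vee}\otimes S=0$, the unique element satisfies every condition, and the statement is vacuous. Hence the value $D=1\in\,]0,+\infty[\,$ works in all cases, giving the desired inclusion. I expect no real obstacle: the argument is essentially a one-line invocation of submultiplicativity combined with the hypothesis on $a$, and the only point requiring a moment's care is the bookkeeping for the empty product in the definition of $\SIso(C,\sigma)(S)$.
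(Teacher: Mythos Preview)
Your proposal is correct and essentially identical to the paper's own proof, which is simply the one-line chain $|a(c_0)\cdots a(c_{n-1})|_S \leq |a(c_0)|_S\cdots |a(c_{n-1})|_S \leq |c_0|_C\cdots |c_{n-1}|_C$ using submultiplicativity followed by the hypothesis on $a$. Your additional remarks on the empty product are a harmless extra layer of care that the paper omits (since $|1|_S=1$ is already an axiom of a nonzero Lipschitz Banach halo).
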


\begin{proof}
For any $(c_i)_{i=0}^{n-1} \in C^n$ with $n \in \N$, we have
\begin{eqnarray*}
|a(c_0) \cdots a(c_{n-1})|_S \leq |a(c_0)|_S \cdots |a(c_{n-1})|_S \leq |c_0|_C \cdots |c_{n-1}|_C.
\end{eqnarray*}
\end{proof}

\begin{proposition}
\label{coincidence of SIso}
For any uniform commutative $R$-algebra $S$ with submultiplicative norm, we have
$$\SIso(C,\sigma)(S) = \{a \in C^{\vee} \otimes S \mid \forall c \in C, |a(c)|_S \leq |c|_C, a \sigma_S(a) = \sigma_S(a)a = 1\}.$$
\end{proposition}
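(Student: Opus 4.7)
The plan is to combine the easy inclusion already proved in Proposition \ref{inclusion of SIso} with a short diagonal argument using the uniformity hypothesis on $S$.

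First, the inclusion $\supseteq$ is exactly the content of Proposition \ref{inclusion of SIso}, since submultiplicativity suffices there. So the real work is the reverse inclusion: given $a \in \SIso(C,\sigma)(S)$, I must show that $|a(c)|_S \leq |c|_C$ for every $c \in C$ (the identity $a\sigma_S(a) = \sigma_S(a)a = 1$ is already part of the definition of $\SIso$).

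The idea is to specialize the defining bound of $\SIso(C,\sigma)(S)$ to a constant tuple. By hypothesis, there exists $D \in \,]0,\infty[$ such that for every $n \in \N$ and every $(c_i)_{i=0}^{n-1} \in C^n$,
$$|a(c_0)\cdots a(c_{n-1})|_S \leq D\, |c_0|_C \cdots |c_{n-1}|_C.$$
Fix $c \in C$ and apply this with $c_0 = \cdots = c_{n-1} = c$. One obtains $|a(c)^n|_S \leq D\, |c|_C^n$ in $S$. Using the uniformity $|f^n|_S = |f|_S^n$ of $S$, the left-hand side equals $|a(c)|_S^n$, so
$$|a(c)|_S \leq D^{1/n}\, |c|_C.$$

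Letting $n \to \infty$ and using $D^{1/n} \to 1$ (since $D>0$) gives $|a(c)|_S \leq |c|_C$, which is the desired shortness of the evaluation on $c$. No step looks to be a genuine obstacle: submultiplicativity enters only through Proposition \ref{inclusion of SIso}, while uniformity is precisely what converts the uniform polynomial-growth constant $D$ into the sharp short bound. The proof is therefore short and only requires writing out the specialization to a constant tuple and the limit $n \to \infty$.
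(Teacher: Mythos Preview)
Your proof is correct and follows essentially the same approach as the paper: both invoke Proposition \ref{inclusion of SIso} for one inclusion, then for the other take the constant tuple $(c,\ldots,c)$, apply uniformity to get $|a(c)|_S^n \leq D\,|c|_C^n$, and let $n\to\infty$.
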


\begin{proof}
The right hand side is contained in the left hand side by Proposition \ref{inclusion of SIso}. Let $a \in \SIso(C,\sigma)(S)$. By the definition of $\SIso(C,\sigma)(S)$, there exists some $R \in ]0,\infty[$ such that $|a(c_0) \cdots a(c_{n-1})|_S \leq R |c_0|_C \cdots |c_{n-1}|_C$ for any $n \in \N$ and $(c_i)_{i=0}^{n-1} \in C^n$. Let $c \in C$. For any $n \in \N$, we have $|a(c)|_S^n = |a(c)^n|_S \leq R |c|_C ^n$ by the uniformity of $S$, and hence $|a(c)|_S \leq R^{\frac{1}{n}} |c|_C$. It implies $|a(c)|_S \leq |c|_C$, as $\lim_{n \to \infty} R^{\frac{1}{n}} = 1$.
\end{proof}

We finish this subsection by giving sufficient conditions for the assumption in Proposition \ref{representability-short-isometries-comonoid}.

\begin{proposition}
\label{sufficient condition 1}
Let $C$ be a comonoid of $\Mod_R$. If the underlying module of $C$ over the underlying ring $R_0$ of $R$ is free of finite rank and every $R$-linear homomorphism $C \to R$ is Lipschitz, then for every $R$-algebra $S$, the morphism
$$\alpha_{C,S}:\uHom_R(C,R) \otimes_R S \to \uHom_S(C\otimes_R S,S)$$
is an isomorphism.
\end{proposition}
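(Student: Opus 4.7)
The plan is to apply Proposition \ref{criterion for boundedness} to both $C$ and $C \otimes_R S$, using the $R_0$-basis $e_1,\ldots,e_n$ of $C$ together with its dual basis to produce the required coercive estimates. First, since every $R$-linear map $C \to R$ is Lipschitz by hypothesis, the dual basis elements $e_k^{*} \colon C \to R$ lie in $C^{\vee}$; this identifies $C^{\vee}$ with the algebraic $R_0$-dual of $C$, which is $R_0$-free of rank $n$ with basis $(e_k^{*})$. Applying the Lipschitz bound $|e_k^{*}(c)|_R \leq |e_k^{*}|_{op}\,|c|_C$ to $c = \sum_j s_j e_j$ gives $\|(s_j)_j\|_{R,\infty} \leq K\,|c|_C$ with $K = \max_k |e_k^{*}|_{op}$, which is exactly the coercivity hypothesis of Proposition \ref{criterion for boundedness}.

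Next I would construct, for each $k$, a Lipschitz $S$-linear coordinate projection $\phi_k := \alpha_{C,S}(e_k^{*} \otimes 1) \colon C \otimes_R S \to S$. The $R$-linear map $c \mapsto e_k^{*}(c)\cdot 1_S$ from $C$ to $S$ is Lipschitz, so by the scalar extension adjunction of Proposition \ref{extension-of-scalars} it corresponds uniquely to a Lipschitz $S$-linear $\phi_k$ satisfying $\phi_k(e_j \otimes 1) = \delta_{jk}$. To identify the underlying $S_0$-module of $C \otimes_R S$, I would approximate any $y \in C \otimes_R S$ by elements $y_N = \sum_j s_{N,j}(e_j \otimes 1)$ of the dense image of the algebraic tensor product; the Lipschitz estimate $|s_{N,j} - s_{M,j}|_S = |\phi_j(y_N - y_M)|_S \leq |\phi_j|_{op}\,|y_N - y_M|_{C \otimes_R S}$ shows $(s_{N,j})_N$ is Cauchy in the complete ring $S$, so it converges to $s_j := \phi_j(y)$ and $y = \sum_j s_j(e_j \otimes 1)$. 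Thus $C \otimes_R S$ is algebraically $S_0$-free of rank $n$ with basis $(e_j \otimes 1)$, and the coercivity $\|(s_j)\|_{S,\infty} \leq (\max_k |\phi_k|_{op})\,|y|_{C \otimes_R S}$ holds, so Proposition \ref{criterion for boundedness} gives that every $S_0$-linear map $C \otimes_R S \to S$ is Lipschitz, identifying $\uHom_S(C \otimes_R S, S)$ with the algebraic $S$-dual, which is $S_0$-free of rank $n$.

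The same argument will apply to $C^{\vee}$: the hypothesis that every $R_0$-linear map $C^{\vee} \to R$ is Lipschitz is inherited from the fact that the evaluations $\xi \mapsto \xi(c)$ for $c \in C$ span the algebraic double dual of $C$ (which equals $C$ since $C$ is finite-rank free) and each such map is Lipschitz of operator norm bounded by $|c|_C$. So $C^{\vee} \otimes_R S$ is likewise $S_0$-free of rank $n$ with basis $(e_k^{*} \otimes 1)$. Under these identifications $\alpha_{C,S}$ sends $e_k^{*} \otimes 1$ to $\phi_k$, which is the basis of $\uHom_S(C \otimes_R S, S)$ dual to $(e_j \otimes 1)$, so $\alpha_{C,S}$ is bijective with explicit inverse $\psi \mapsto \sum_k e_k^{*} \otimes \psi(e_k \otimes 1)$, and a direct finite-rank estimate using the Lipschitz triangle inequality and the operator norm of $\psi$ shows both directions are Lipschitz.

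The hard part will be in the second paragraph: showing that no element of the completed tensor product $C \otimes_R S$ lies outside the finite $S$-linear span of the $e_j \otimes 1$. This requires extracting coordinate-wise convergence from convergence in the (implicitly defined) Lipschitz tensor norm, which can only be done once the coordinate functionals $\phi_k$ are already available. So the argument must proceed slightly non-linearly, producing $\phi_k$ via the universal property of $\otimes_R$ \emph{before} identifying the underlying $S_0$-module of $C \otimes_R S$ — and only then closing the loop with Proposition \ref{criterion for boundedness}.
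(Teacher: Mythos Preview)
Your proof is correct and follows essentially the same approach as the paper: both use the dual basis $(e_k^{*})$ to reduce everything to finite-rank coordinate estimates and then verify that $\alpha_{C,S}$ matches bases with a Lipschitz inverse. You are more explicit than the paper about why the completed tensor products $C \otimes_R S$ and $C^{\vee} \otimes_R S$ remain $S_0$-free of rank $n$ (your Cauchy-sequence argument via the coordinate projections $\phi_k$), a step the paper asserts in one line, and you package the final inverse bound through Proposition~\ref{criterion for boundedness} whereas the paper writes out the explicit inequality $|d'|_{op} \geq |1|_S^{-1} C_0^{-1} C_1^{-1} C_S^{1-n}\,|d|_{C^{\vee}\otimes_R S}$ by hand.
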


\begin{proof}
We denote by $R_0$ the underlying ring of $R$, and by $S_0$ the underlying ring of $S$. Take an $R_0$-linear basis $(c_i)_{i=0}^{n-1}$ with $n \in \N$ of the underlying $R_0$-module of $C$. Since every $R$-linear homomorphism $C \to R$ is Lipschitz, the underlying $R_0$-module of $C^{\vee}$ admits its dual basis $(\delta_i)_{i=0}^{n-1}$, and hence the underlying $S_0$-module of $C^{\vee} \otimes_R S$ is a free $S_0$-module of rank $n$ with the $S_0$-linear basis $(\delta_i \otimes 1)_{i=0}^{n-1}$. Put $C_0 \coloneqq \max_{i=0}^{n-1} |c_i|_C$ and $C_1 \coloneqq \max_{i=0}^{n-1} |\delta_i|_{C^{\vee}}$. The composite of the given morphism and the canonical embedding from $\uHom_S(C\otimes_R S,S)$ into the $S_0$-module of all $R$-linear homomorphisms $C \to S$ is an $S_0$-linear isomorphism, every $R$-linear homomorphism $C \to S$ extends to a Lipschitz $R$-linear homomorphism $C\otimes_R S \to S$. This implies that the given morphism is also an $S_0$-linear isomorphism. Therefore it suffices to show that the inverse of the given morphism is Lipschitz. For this purpose, it suffices to show that for any $d \in C^{\vee} \otimes_R S$, its image $d' \in \uHom_S(C\otimes_R S,S)$ is of norm $\geq |1|_S^{-1} C_0^{-1} C_1^{-1} C_S^{1-n} |d|_{C^{\vee} \otimes_R S}$.

\vspace{0.1in}
Take a unique $(s_i)_{i=0}^{n-1} \in S_0^n$ with $d = \sum_{i=0}^{n-1} \delta_i \otimes s_i$. We have
\begin{eqnarray*}
& & |d|_{C^{\vee} \otimes_R S} = \left| \sum_{i=0}^{n-1} \delta_i \otimes s_i \right|_{C^{\vee} \otimes_R S} \leq C_S^{n-1} \left\| (|\delta_i|_{C^{\vee}} |s_i|_S)_{i=0}^{n-1} \right\|_{\infty} \\
& \leq & C_S^{n-1} \left\| (|\delta_i|_{C^{\vee}})_{i=0}^{n-1} \right\|_{\infty} \left\| (|s_i|_S)_{i=0}^{n-1} \right\|_{\infty} = C_S^{n-1} C_1 \left\| (|s_i|_S)_{i=0}^{n-1} \right\|_{\infty}.
\end{eqnarray*}
On the other hand, we have
\begin{eqnarray*}
& & |d'|_{\uHom_S(C\otimes_R S,S)} = |d'|_{op} \geq \frac{|d'(c_i \times 1)|_S}{|c_i \otimes 1|_{C \otimes_R S}} \\
& = & \frac{|s_i|_S}{|c_i \otimes 1|_{C \otimes_R S}} \geq \frac{|s_i|_S}{|c_i|_C |1|_S} \geq |1|_S^{-1} C_0^{-1} |s_i|_S
\end{eqnarray*}
for any $0 \leq i \leq n-1$ and hence
\begin{eqnarray*}
& & |d'|_{\uHom_S(C\otimes_R S,S)} \geq |1|_S^{-1} C_0^{-1} \left\|(|s_i|_S)\right\|_{\infty} \geq |1|_S^{-1} C_0^{-1} C_1^{-1} C_S^{1-n} |d|_{C^{\vee} \otimes_R S}.
\end{eqnarray*}
\end{proof}

\begin{corollary}
\label{sufficient condition 2}
Let $C$ be a comonoid of $\Mod_R$. Suppose the underlying module of $C$ over the underlying ring $R_0$ of $R$ is free of rank $n \in \N$ with an $R_0$-linear basis $(c_i)_{i=1}^{n}$, and suppose further that there exists a constant $D \in ]0,\infty[$ such that for any $(r_i)_{i=1}^{n} \in R^n$, the equality $|(r_i)_{i=1}^{n}|_{S,\infty} \leq D |\sum_{i=1}^{n} r_i c_i|_C$ holds. Then the morphism
$$\alpha_{C,S}:\uHom_R(C,R) \otimes_R S \to \uHom_S(C\otimes_R S,S)$$
is an isomorphism for any $R$-algebra $S$.
\end{corollary}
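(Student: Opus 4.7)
The plan is to deduce this corollary as an immediate consequence of Proposition \ref{sufficient condition 1} by verifying its non-trivial hypothesis, namely that every $R$-linear homomorphism $C \to R$ is Lipschitz. The remaining hypothesis of Proposition \ref{sufficient condition 1}, namely that the underlying module of $C$ over $R_0$ is free of finite rank, is given directly in the assumption of the corollary.

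To verify the Lipschitz condition, I would apply Proposition \ref{criterion for boundedness} with $M := C$ and $N := R$ (viewed as the regular Lipschitz $R$-module over itself), taking for the basis of the underlying $R_0$-module of $M$ precisely the basis $(c_i)_{i=1}^n$ supplied by the hypothesis of the corollary. The hypothesis appearing in the criterion of Proposition \ref{criterion for boundedness} is the existence of a constant $C \in [0,\infty[$ such that $\|(s_j)_{j=1}^n\|_{S,\infty} \leq C |\sum_j s_j e_j|_M$ for all coefficient tuples; this is exactly what the hypothesis of the corollary asserts (with $S$ played by $R$, $M$ played by $C$, $e_j$ played by $c_j$, and $C$ played by $D$). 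Hence every $R_0$-linear homomorphism $C \to R$ is Lipschitz, so in particular every $R$-linear homomorphism $C \to R$ is.

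Having verified both hypotheses of Proposition \ref{sufficient condition 1}, its conclusion yields at once that $\alpha_{C,S}$ is an isomorphism for every $R$-algebra $S$, which is the claim of the corollary. There is no real obstacle in this argument: it amounts to checking that the notation $|(r_i)_{i=1}^n|_{S,\infty}$ in the statement of the corollary is to be read as $\|(r_i)_{i=1}^n\|_{R,\infty}$ in the sense of Definition \ref{abstract sum} applied to the halo $R$, which matches the framework of Proposition \ref{criterion for boundedness}, and then invoking the two previous results in succession.
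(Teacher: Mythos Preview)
Your proposal is correct and follows exactly the same approach as the paper: the paper's proof simply states that the assertion follows immediately from Proposition~\ref{criterion for boundedness} and Proposition~\ref{sufficient condition 1}, which is precisely the two-step argument you spelled out. Your reading of $|(r_i)_{i=1}^n|_{S,\infty}$ as $\|(r_i)_{i=1}^n\|_{R,\infty}$ in the sense of Definition~\ref{abstract sum} is the intended one.
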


\begin{proof}
The assertion immediately follows from Proposition \ref{criterion for boundedness} and Proposition \ref{sufficient condition 1}.
\end{proof}

\subsection{The global short isometry subgroup of $\GL_n$}
\label{short-isometries-GLn}
The aim of this whole Section is to prove the following theorem,
that is the main result of this paper.
\begin{theorem}
\label{main-theorem}
There exists an explicit Lipschitz coalgebra with involution $(C,\sigma)$
over $\Zbf=\Lip(\Z,|\cdot|_\infty,1)$ whose short isometry group, denoted
$K_n=\SIso(C,\sigma)$, is representable by a Lipschitz Banach halo $\Ac(K_n)$ and fulfils
$$K_n(R)\subset \GL_n(R)$$
for every $R\in \CAlg_\Zbf$ and such that, if we denote $\Rbf=\Lip(\R,|\cdot|_\infty,1)$ and $\Qbf_p=\Lip(\Q_p,|\cdot|_p,\infty)$,
we have
\begin{enumerate}
    \item $K_n(\Rbf)\cong O_n(\R)$,
    \item $K_n(\Qbf_p)\cong \GL_n(\Z_p)$,
    \item $K_n(\Zbf)\cong O_n(\R)\cap \GL_n(\Z)\equiv \GL_n(\F_{\{\pm 1\}})$
\end{enumerate}
\end{theorem}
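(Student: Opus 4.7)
The plan is to engineer an explicit Lipschitz coalgebra $(C,\sigma)$ whose points $a \in C^\vee \otimes S$ encode a pair of matrices $(A,B) \in M_n(S)^2$ with the involution swapping them, so that the condition $a\sigma(a) = 1$ forces $B = A^{-1}$ and hence $A \in \GL_n(S)$, while the short condition imposes the appropriate operator-type bound on both $A$ and its inverse.

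Concretely, I would take $C = C_+ \oplus C_-$ where $C_+$ and $C_-$ are two copies of the free $\Z$-module of rank $n^2$, each identified with $M_n(\Z)$ and equipped with a norm whose dual on $C_\pm^\vee$ yields the $\ell^2$-operator norm over $\R$ and the $\ell^\infty$ norm on entries over $\Q_p$ after scalar extension (essentially a nuclear/trace-type norm, whose compatibility with the Lipschitz direct sum of Section \ref{Direct sums} and with scalar-extension computations analogous to Example \ref{scalar-extension-operator-norm} needs to be verified). With basis $(X_{ij})$ for $C_+$, endow it with the standard matrix coalgebra structure $\Delta(X_{ij}) = \sum_k X_{ik} \otimes X_{kj}$, $\eta(X_{ij}) = \delta_{ij}$, and with basis $(Y_{ij})$ for $C_-$ put the opposite structure $\Delta(Y_{ij}) = \sum_k Y_{kj} \otimes Y_{ik}$, $\eta(Y_{ij}) = \delta_{ij}$, reflecting that $(AB)^{-1} = B^{-1}A^{-1}$. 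Setting $\sigma(X_{ij}) = Y_{ij}$ and $\sigma(Y_{ij}) = X_{ij}$, a direct verification shows that $\sigma \colon C \to C^{\mathrm{op}}$ is a comonoid morphism.

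Since $C$ is free of rank $2n^2$ over $\Z$ and the chosen norm dominates the $\ell^\infty$ on coefficients, Corollary \ref{sufficient condition 2} ensures $\alpha_{C,S}$ is an isomorphism for every $\Zbf$-algebra $S$, so Proposition \ref{representability-short-isometries-comonoid} represents $K_n := \SIso(C,\sigma)$ by
$$\Ac(K_n) = \Sym_\Zbf(C)/\overline{\left(\textstyle\sum_k X_{ik}Y_{kj} - \delta_{ij}\right)_{i,j}}.$$
Under the identification $C^\vee \otimes S \cong M_n(S) \times M_n(S)$ with multiplication $(A_1,B_1)(A_2,B_2) = (A_1A_2, B_2B_1)$ (owing to the opposite comultiplication on $C_-$), an element $a$ corresponds to $(A,B)$, $\sigma^\vee(a)$ to $(B,A)$, and $a\sigma^\vee(a) = 1$ yields $AB = I$; as $A,B$ are $n \times n$ matrices over a commutative ring, this automatically gives $BA = I$, so $a$ is determined by $A \in \GL_n(S)$.

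For each $S \in \{\Rbf, \Qbf_p, \Zbf\}$, which is a uniform commutative $\Zbf$-algebra with submultiplicative norm, Proposition \ref{coincidence of SIso} reduces the short condition to $|a(c)|_S \leq |c|_C$ for every $c \in C$; splitting across the two summands, the norm choice forces $|A|_{\mathrm{op}}, |A^{-1}|_{\mathrm{op}} \leq 1$ in the archimedean cases and $A, A^{-1} \in M_n(\Z_p)$ in the $p$-adic case. Consequently $K_n(\Rbf)$ consists of invertible real matrices $A$ such that both $A$ and $A^{-1}$ are isometries of $\R^n$, i.e., $O_n(\R)$; $K_n(\Qbf_p) = \GL_n(\Z_p)$; and $K_n(\Zbf) = O_n(\R) \cap \GL_n(\Z)$ is precisely the group of signed permutation matrices $\GL_n(\Fpm)$. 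The principal technical obstacle is the choice and analysis of the norm on $C_\pm$: its scalar extensions to each of the three target halos, computed via the Lipschitz projective tensor product, must recover the $\ell^2$-operator norm over $\R$ and the $\ell^\infty$-entry norm over $\Q_p$ after passing to the dual, which requires careful case-by-case computations generalizing Example \ref{scalar-extension-operator-norm}.
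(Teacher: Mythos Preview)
Your construction is essentially the paper's: the paper takes $C(K_n)$ to be the $\Zbf$-dual of $\End(V)\times\End(V^\vee)$ equipped with the (restricted real) $\ell^2$-operator norm and involution $(f,g)\mapsto(g^\vee,f^\vee)$, which is isomorphic as a monoid-with-involution to your $(M_n\times M_n^{\mathrm{op}},\text{swap})$ via $(A,B)\mapsto(A,B^T)$, and then proceeds exactly as you outline via Corollary~\ref{sufficient condition 2}, Proposition~\ref{representability-short-isometries-comonoid}, and Proposition~\ref{coincidence of SIso}. The ``principal technical obstacle'' you flag is precisely where the paper invests its effort---a chain of lemmas showing that the comparison maps $\iota_{S,2}$ (for $S=\Rbf$) and $\iota_{S,\infty}$ (for $S=\Qbf_p$) from $C(K_n)^\vee\otimes\Lip(S)$ to the matrix algebras $M(n,S,q)$ are isometric isomorphisms and that the unit ball condition on $a$ translates to $\lvert\iota(a)\rvert\leq 1$---and your sketch has not yet supplied these computations.
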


\begin{remark}
We may change $\Zbf$ to $\sigma_t(\Zbf)$ for $t\in \R_+^*$ without changing
these results.
\end{remark}

We fix a positive integer $n$. We set $R = (\Z,|\cdot|_{\infty},1)$. Let $S$ be a commutative $R$-algebra with the underlying ring $S_0$ with $p_S = 1$. We fix a $q \in [1,\infty]$. We denote by $S^n_{\ell^q}$ the triad $(S_0^n,\|\cdot\|_{S,q},1)$ (cf.\ Definition \ref{abstract sum}), which forms an $S$-module by $1 \leq \min \{q,p_S\}$. We observe the structures of the endomorphism ring of $S^n_{\ell^q}$ (cf.\ proposition \ref{endomorphism ring}).

\begin{proposition}
\label{boundedness of dual}
\begin{itemize}
    \item [(i)] Every $S_0$-linear endomorphism of the $S$-modules $S^n_{\ell^q}$ and $\uHom_S(S^n_{\ell^q},S)$ is Lipschitz. In particular, for any $f \in \End_S(S^n_{\ell^q})$, its $S_0$-linear dual $f^{\vee}$ belongs to $\End_S(\uHom_S(S^n_{\ell^q},S))$.
    \item [(ii)] The map $\vee \colon \End_S(S^n_{\ell^q}) \to \End_S(\uHom_S(S^n_{\ell^q},S)), \ f \mapsto f^{\vee}$ is a Lipschitz $S_0$-linear isomorphism whose inverse is also Lipschitz.
\end{itemize}
\end{proposition}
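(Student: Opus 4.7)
The plan is to apply Proposition~\ref{criterion for boundedness} successively to the four $S$-modules $S^n_{\ell^q}$, $\uHom_S(S^n_{\ell^q},S)$, $\End_S(S^n_{\ell^q})$, and $\End_S(\uHom_S(S^n_{\ell^q},S))$, each equipped with its natural basis (standard basis, dual basis, elementary matrices, dual elementary matrices). In every case the coefficient-estimate required by the criterion is obtained by evaluating a linear combination at a single basis vector, so the whole argument reduces to the elementary bound ``a coefficient is dominated by the operator norm of the combination it participates in.''

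For part~(i), I would first apply the criterion to $M=S^n_{\ell^q}$ with the standard basis $(\delta_j)_{j=1}^{n}$: the required estimate $\|(s_j)\|_{S,\infty}\leq \|(s_j)\|_{S,q}$ is the classical inequality furnished by Lemma~\ref{p-norm-q-norm}. This yields the Lipschitz-ness of every $S_0$-linear endomorphism of $S^n_{\ell^q}$, and in particular every $S_0$-linear form $S^n_{\ell^q}\to S$ is Lipschitz. Consequently the underlying $S_0$-module of $\uHom_S(S^n_{\ell^q},S)$ coincides with $\Hom_{S_0}(S_0^n,S_0)$ and is free on the dual basis $(\delta_j^{\ast})_{j=1}^{n}$. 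Evaluating $\phi=\sum s_j\delta_j^{\ast}$ at $\delta_i$ gives $|s_i|_S=|\phi(\delta_i)|_S\leq|\phi|_{op}$, so the criterion applies (with constant $1$) to $\uHom_S(S^n_{\ell^q},S)$ as well. Hence every $S_0$-linear endomorphism of $\uHom_S(S^n_{\ell^q},S)$ is Lipschitz, and in particular $f^{\vee}$ lies in $\End_S(\uHom_S(S^n_{\ell^q},S))$ for every $f\in\End_S(S^n_{\ell^q})$.

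For part~(ii), the map $\vee$ is manifestly $S_0$-linear, and on the underlying free $S_0$-modules of rank $n^2$ it coincides with the usual transposition isomorphism on $n\times n$ matrices, hence is bijective. The Lipschitz bounds on $\vee$ and $\vee^{-1}$ then follow from the same criterion applied to the source $\End_S(S^n_{\ell^q})$ with the basis of elementary matrices: evaluating $A=\sum a_{ij}E_{ij}$ at $\delta_k$ gives $|a_{ik}|_S\leq|A(\delta_k)|_{S^n_{\ell^q}}\leq|A|_{op}$, so the coefficient-estimate holds with constant $1$. The analogous computation on the dual basis vectors $\delta_k^{\ast}$ shows that $\End_S(\uHom_S(S^n_{\ell^q},S))$ also satisfies the criterion. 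Applying Proposition~\ref{criterion for boundedness} once in each direction, to $\vee$ and to its $S_0$-linear inverse, concludes the proof.

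The only technically delicate point is the identification at the start of part~(i) of $\uHom_S(S^n_{\ell^q},S)$ with $\Hom_{S_0}(S_0^n,S_0)$: one needs the first application of the criterion to be in place before the dual basis $(\delta_j^{\ast})$ can be asserted to span $\uHom_S(S^n_{\ell^q},S)$. Once this identification is secured, the three remaining applications of Proposition~\ref{criterion for boundedness} are routine.
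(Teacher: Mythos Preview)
Your proposal is correct and follows the same approach as the paper, which simply states that the assertions follow from Proposition~\ref{criterion for boundedness} ``because we can choose $E$ to be the canonical bases and $C$ to be $1$.'' You have unpacked exactly what this means: the standard basis for $S^n_{\ell^q}$, the dual basis for $\uHom_S(S^n_{\ell^q},S)$, and the elementary matrices for the two endomorphism rings, together with the verification that the coefficient estimate holds with constant~$1$ in each case. Your explicit remark that one must first establish boundedness of linear forms before the dual basis can be invoked is a point the paper leaves implicit.
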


\begin{proof}
The assertions immediately follow from Proposition \ref{criterion for boundedness}, because we can choose $E$ to be the canonical bases and $C$ to be $1$.
\end{proof}

We also denote by $\vee$ the inverse of $\vee$ in the assertion of Proposition \ref{boundedness of dual} (ii). We denote by $M(n,S,q)$ the direct product $\Lip(S)$-algebra
\begin{eqnarray*}
\End_{\Lip(S)}(\Lip(S^n_{\ell^q})) \times \End_{\Lip(S)}(\uHom_{\Lip(S)}(\Lip(S^n_{\ell^q}),\Lip(S))),
\end{eqnarray*}
and equip it with the involution
\begin{eqnarray*}
t_{M(n,S,q)} \colon M(n,S,q) & \to & M(n,S,q) \\
(f,g) & \mapsto & (g^{\vee},f^{\vee}),
\end{eqnarray*}
which is a monoid homomorphism $M(n,S,q) \to M(n,S,q)^{op}$. In particular, $M(n,S,q)$ is a monoid with involution over $\Lip(S)$, whose underlying $S_0$-algebra with involution represents $\GL_n \times_{\Spec(\Z)} \Spec(S_0)$.

We note that the underlying $S_0$-module of $M(n,S,q)$ is a free $S_0$-module of rank $2n^2$. Therefore its $S_0$-linear dual is also a free $S_0$-module of rank $2n^2$ and the multiplication of $M(n,S,q)$ induces a comultiplication on it. Contrary to Remark \ref{dual of monoid}, the comultiplication is Lipschitz as a map
\begin{eqnarray*}
\uHom_{\Lip(S)}(M(n,S,q),\Lip(S)) \to \uHom_{\Lip(S)}(M(n,S,q),\Lip(S))^{\otimes_S 2}
\end{eqnarray*}
by Proposition \ref{criterion for boundedness}. Therefore $\uHom_{\Lip(S)}(M(n,S,q),\Lip(S))$ forms a comonoid with involution over $\Lip(S)$ with respect to the dual involution of $t_{M(n,S,q)}$.

We denote by $(C(K_n),\sigma_{C(K_n)})$ the $\Zbf$-linear dual of the monoid with involution over $\Zbf$ given as $(M(n,R,2),t_{M(n,R,2)})$ except that the norm $|\cdot|_{M(n,R,2)}$ is replaced by the restriction of $|\cdot|_{M(n,(\R,|\cdot|_{\infty},1),2)}$ along
the natural map
$$
M(n,R,2)=M(n,(\Z,|\cdot|_\infty,1),2)\to
M(n,(\R,|\cdot|_\infty,1),2).
$$
Then $(C(K_n),\sigma_{C(K_n)})$ forms a comonoid with involution over $\Zbf$ again by Proposition \ref{criterion for boundedness}.

Since $(C(K_n),\sigma_{C(K_n)})$ is a comonoid object in $\Mod(\Zbf)$ with an involution, we can apply $\SIso$ to it.
The reader should be careful that the $\ell^2$ operator norm on matrices over $\Z$ might not be reflexive, i.e. might not coincide with the second dual norm. That is why we use the restriction of $|\cdot|_{M(n,(\R,|\cdot|_{\infty},1),2)}$ instead of $|\cdot|_{M(n,R,2)}$ here.

\begin{proposition}
The group of short isometries $\SIso(C(K_n),\sigma_{C(K_n)})$ is represented by $\Ac(K_n):=\Ac(\SIso(C(K_n),\sigma_{C(K_n)}))$.
\end{proposition}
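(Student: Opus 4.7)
The plan is to apply Proposition \ref{representability-short-isometries-comonoid} to $(C(K_n), \sigma_{C(K_n)})$ over $\Zbf$; for this it suffices to verify that the morphism $\alpha_{C(K_n),S}$ is an isomorphism for every $\Zbf$-algebra $S$, and I would obtain this by invoking Corollary \ref{sufficient condition 2}.

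The first step is to exhibit an explicit $\Z$-basis of $C(K_n)$. The underlying $\Z$-module of $M(n,R,2)$ is canonically isomorphic to $M_n(\Z) \times M_n(\Z)$, free of rank $2n^2$, with the standard matrix-unit basis $\{(E_{ij}, 0), (0, E_{ij})\}_{1 \leq i, j \leq n}$. By construction, the norm on $M(n,R,2)$ is the restriction along $M(n,R,2) \hookrightarrow M(n,(\R,|\cdot|_\infty,1),2)$ of the product of $\ell^2$-operator norms; evaluating a matrix on each standard $\Z^n$-basis vector, one sees that for every integer linear combination $A = \sum a_{ij}^{(\ell)} E_{ij}^{(\ell)}$ one has $\max_{ij,\ell} |a_{ij}^{(\ell)}| \leq |A|_{M(n,R,2)}$. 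By Proposition \ref{criterion for boundedness} every $\Z$-linear functional $M(n,R,2) \to \Z$ is then Lipschitz, and the $\Z$-linear dual admits the dual basis $\{\delta_{ij}^{(\ell)}\}$; in particular, $C(K_n)$ is free of rank $2n^2$ over $\Z$.

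The second step is to verify the norm bound required by Corollary \ref{sufficient condition 2} with respect to this dual basis. For $\phi = \sum r_{ij}^{(\ell)} \delta_{ij}^{(\ell)}$ in $C(K_n)$, evaluation on the matrix-unit basis gives $r_{ij}^{(\ell)} = \phi(E_{ij}^{(\ell)})$, and since each matrix unit has norm $1$ in $M(n,R,2)$ (its $\ell^2$-operator norm, computed over $\R$), the inequality $|r_{ij}^{(\ell)}| \leq |\phi|_{C(K_n)}$ holds for every index. This yields $\|(r_{ij}^{(\ell)})\|_{\Zbf, \infty} \leq |\phi|_{C(K_n)}$, so the hypothesis of Corollary \ref{sufficient condition 2} is satisfied with $D = 1$. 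Corollary \ref{sufficient condition 2} then supplies the isomorphism $\alpha_{C(K_n),S}$ for every $\Zbf$-algebra $S$, and Proposition \ref{representability-short-isometries-comonoid} furnishes the desired representability by $\Ac(K_n)$.

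The main obstacle I anticipate is the bookkeeping around the non-reflexivity of the $\ell^2$ operator norm on integer matrices: the norm on $C(K_n)$ is defined via the restriction to integer matrices of the $\ell^2$ operator norm over $\R$ precisely in order to sidestep this reflexivity failure, and one must confirm that this restriction remains compatible with the hypothesis of Corollary \ref{sufficient condition 2}. The computation above suggests that it is, because matrix units have $\ell^2$-operator norm exactly $1$ over $\R$ and this is preserved by restriction to integer coefficients, but this is the pivotal verification that enables the entire argument.
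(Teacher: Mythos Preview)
Your proposal is correct and follows essentially the same route as the paper: apply Proposition \ref{representability-short-isometries-comonoid} after checking the hypothesis of Corollary \ref{sufficient condition 2} for the dual basis $(e_{i,j,b})$ with constant $D=1$, which reduces to the observation that the matrix units $E_{i,j,b}$ have $\ell^2$-operator norm $1$ over $\R$. The paper isolates exactly this verification as Proposition \ref{criterion for the sufficient condition 2} (and the companion fact $|e_{i,j,b}|_{C(K_n)}=1$ as Proposition \ref{norm of e_i,j,b}), so your anticipated ``pivotal verification'' is precisely what the paper records separately.
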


\begin{proof}
The assertion immediately follows from Proposition \ref{representability-short-isometries-comonoid}. Indeed, we will show in Proposition \ref{criterion for the sufficient condition 2} that the dual basis of the canonical basis of $M(n,R,2)$ satisfies the assumption of Corollary \ref{sufficient condition 2} with respect to the constant $D = 1$.
\end{proof}

We will give an explicit computation of $|\cdot|_{C(K_n)}$ for the reader's convenience in \S \ref{Computation of $p$-adic and real points}.

\subsection{Global short isometry subgroups of other classical groups}
\label{Global short isometry subgroups of other classical groups}

Let $V=\Z^n$ be the free module, that we first consider as a mere module.
Recall from \cite{Involutions} the definition of the group of isometries of an algebra with involution $(A,\sigma)$ over a base commutative ring $R$, i.e.\ a pair of an $R$-algebra $A$ and an involution $\sigma$ on $A$ that is a ring homomorphism $A \to A^{op}$.

\begin{definition}
Let $(A,\sigma)$ be an algebra with involution over a commutative ring $R$ and
$S$ be a commutative $R$-algebra. Then the group of isometries of $(A,\sigma)$
with values in $S$ is
$$
\Iso(A,\sigma)(S):=\{a\in A\otimes_R S \mid a\sigma(a)=\sigma(a)a=1_A\}.
$$
\end{definition}
By abuse of notation, if $R$, $S$ and $A$ are Lipschitz Banach halos, we will also
denote $\Iso(A,\sigma)(S)$ the corresponding multiplicative subgroup of the scalar extension
$A\otimes_R S$. By Proposition \ref{inclusion of SIso} and Proposition \ref{coincidence of SIso}, we obtain the following justification of the terminology of ``the functor of short isometries'' in \S \ref{Short isometries of Lipschitz coalgebras}:

\begin{proposition}
\label{coincidence of Iso}
Let $R$ be a commutative Lipschitz Banach halo with $D_R = 1$, $(C,\sigma)$ a coalgebra with involution over $R$, and $S$ a commutative $R$-algebra. We have
$$
\{a\in \Iso(C^{\vee},\sigma^{\vee})(S) \mid \forall c \in C, |a(c)|_S \leq |c|_C\} \subset \SIso(C,\sigma)(S).
$$
In addition, if $S$ is uniform, then the inclusion is an equality.
\end{proposition}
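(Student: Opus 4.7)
The plan is to observe that this proposition is essentially a direct consequence of the already established Propositions \ref{inclusion of SIso} and \ref{coincidence of SIso}, combined with a small verification that the hypothesis $D_R = 1$ ensures the submultiplicativity needed to invoke those results.

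First I would unfold the definitions to identify the set on the left hand side. The involution $\sigma^{\vee}$ on $C^{\vee}$ extends by scalars to an involution $\sigma^{\vee}_S$ on $C^{\vee} \otimes_R S$, and this extension coincides with what is denoted $\sigma_S$ in the definition of $\SIso(C,\sigma)(S)$. Hence
$$
\{a \in \Iso(C^{\vee},\sigma^{\vee})(S) \mid \forall c \in C,\ |a(c)|_S \leq |c|_C\}
$$
unwinds to
$$
\{a \in C^{\vee} \otimes_R S \mid a\sigma^{\vee}_S(a)=\sigma^{\vee}_S(a)a=1,\ \forall c \in C,\ |a(c)|_S \leq |c|_C\},
$$
which is precisely the set appearing in Propositions \ref{inclusion of SIso} and \ref{coincidence of SIso}.

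Next I would verify that $S$ has submultiplicative norm. Since $S$ is an $R$-algebra, the morphism structure requires $p_S \geq p_R$, which in the Lipschitz setting means $C_S \leq C_R$ and $D_S \leq D_R$. Because $D_R = 1$, we obtain $D_S \leq 1$, and therefore for all $f,g \in S$,
$$
|fg|_S \leq D_S\, |f|_S\, |g|_S \leq |f|_S\, |g|_S,
$$
so $S$ is with submultiplicative norm in the sense preceding Proposition \ref{inclusion of SIso}.

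With these two observations in hand, the first assertion is now exactly Proposition \ref{inclusion of SIso} applied to $S$. For the equality under the additional hypothesis that $S$ is uniform, I would simply invoke Proposition \ref{coincidence of SIso}, which gives the reverse inclusion under the combined hypotheses of submultiplicativity (established above) and uniformity (supplied by hypothesis). There is no real obstacle here beyond bookkeeping: the only subtle point is the verification that the identifications $a\sigma(a)$ in the two definitions genuinely agree after taking $\sigma^{\vee}$ rather than $\sigma$, which is tautological once one recalls that $\SIso$ uses the dual involution on $C^{\vee}$ through the pairing $C \otimes C^{\vee} \to R$.
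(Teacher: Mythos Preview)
Your proposal is correct and is exactly the argument the paper intends: the paper states this proposition immediately after remarking that it follows from Proposition~\ref{inclusion of SIso} and Proposition~\ref{coincidence of SIso}, and the observation that $D_R=1$ forces $D_S\leq 1$ (hence submultiplicativity of $|\cdot|_S$) is also made explicitly in the text preceding those propositions. There is nothing further to add.
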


We already know that $\GL_n$ is
naturally identified with the isometry group $\Iso(A,\sigma)$ of an algebra with involution given
by $A=\End(V)\times \End(V^\vee)$ with its transposition involution
$$
\sigma(f,g)=(g^\vee,f^\vee),
$$
where $\End$ denotes the algebraic endomorphism ring of a mere module.
Let $\phi:V\times V\to \Z$ be a non-degenerate bilinear pairing and consider
the associated involution algebra $(\End(V),\sigma_\phi)$,
where
$$\sigma_\phi(f)=(\phi^\flat)^{-1}\circ f^\vee\circ \phi^\flat,$$
with $\phi^\flat:V\overset{\sim}{\longrightarrow} V^\vee$ the isomorphism induced by $\phi$.

There is a natural injective morphism of involution algebras
$$
\begin{array}{ccc}
\End(V)     & \to       & \End(V)\times \End(V^\vee)\\
f           & \mapsto   & (f,[\sigma_\psi(f)]^\vee)
\end{array}
$$
that is compatible with the involutions $\sigma$ and $\sigma_\phi$ and
induces the natural map
$$\Iso(\End(V),\sigma_\phi)\to \GL_n$$
between the classical group associated to the given non-degenerate
bilinear form and $\GL_n$.

By $\Z$-linear duality, we get a surjective morphism of comonoid with involution
$$
[\End(V)\times \End(V^\vee)]^\vee\to \End(V)^\vee.
$$
Since $\End(V)\times \End(V^\vee)$ is the underlying monoid with involution over $\Z$ of $M(2,(\Z,|\cdot|_{\infty},1,2)$, the source of the morphism precisely coincides with the underlying comonoid with involution over $\Z$ of the comonoid with involution $C(K_n)$ over $\Mod(\Zbf)$. We use this map to induce
a quotient norm $\|\cdot\|_{\phi}$ on the target comonoid with involution $(\End(V)^\vee,\sigma_\phi^\vee)$ over $\Z$.

This gives a comonoid $C(K_n(\phi)) = (\End(V)^{\vee},\|\cdot\|_{\phi},(2,1))$ with involution $\sigma_\phi^\vee$ over $\Zbf$.

\begin{definition}
The short isometry group $\SIso(C(K_n(\phi)),\sigma_\phi^\vee)$ of the comonoid with involution
$(C(K_n(\phi)),\sigma_\phi^\vee)$ is denoted $K_n(\phi)$ and
called the short isometry group of the given bilinear form.
\end{definition}

The short surjective comonoid homomorphism
$$C(K_n) \to C(K_n(\phi))$$
given by the quotient map
$$[\End(V) \times \End(V^{\vee})]^{\vee} \to \End(V)^{\vee}$$
induces a natural embedding
$$
K_n(\phi) \to K_n \subset \GL_n
$$
because of the compatibility of $\sigma_{C(K_n)}$ and $\sigma_\phi$. By Proposition \ref{coincidence of SIso}, this embedding induces a natural embedding
\begin{eqnarray*}
& & K_n(\phi)(S) = \SIso(C(K_n(\phi)),\sigma_\phi^\vee)(S) = \SIso(\End(V)^{\vee},\|\cdot\|_{\phi},\sigma_\phi^\vee)(S) \\
& \to & \Iso(\End(V)^{\vee \vee},\sigma_\phi^{\vee \vee})(S) \cong \Iso(\End(V),\sigma_\phi)(S)
\end{eqnarray*}
of subgroups of $\GL_n(S)$ for any uniform $\Zbf$-algebra $S$.

\begin{remark}
More generally, let $B$ be a ring which is a free $\Z$-module of finite rank $m$, and $\gamma$ be an involution of $B$ that is a ring homomorphism $B \to B^{op}$. Fix a left $B$-module $M$ which is a free $\Z$-module of finite rank $n$ such that the canonical morphism $B \to \End(M)$ is injective. We have considered the specific setting $(B,M) = (\End(V),V)$, but $B$ can be chosen arbitrarily because $M$ can be taken as the regular left $B$-module.

Then we may
embed $B$ in
$$A=\End(M)\times \End(M^\vee)$$
by the left action on $M$ and the dual action on
$M^\vee$ twisted by $\gamma$, i.e.\, by $(b\cdot f)(m):=f(\gamma(b) m)$ for $(b,f,m)\in B \times M^\vee \times M$.
A construction similar to the above one gives a quotient comonoid map
$$
A^\vee\to B^\vee
$$
that allows us to define a subgroup
$$
K(B,\gamma) \subset K_n
$$
of the standard subgroup $K_n$ of $\GL_n\cong \GL(M)$ contained in $\Iso(B,\gamma)$.
\end{remark}

\subsection{Computation of $p$-adic and real points}
\label{Computation of $p$-adic and real points}

Following the convention in \S \ref{Global short isometry subgroups of other classical groups}, we demonstrate the computation of $p$-adic and real points of $K_n$. Throughout this subsection, let $R$ denote the short Banach halo $(\Z,|\cdot|_{\infty},1)$, and $S$ denote a commutative $R$-algebra with undering ring $S_0$ and $p_S = 1$.

For an $A \in \End(V)$ (resp.\ $\End(V^{\vee})$) and an $(i,j) \in \N^2$ with $1 \leq i,j \leq n$, we denote by $A[i,j]$ the $(i,j)$-entry of $A$ with respect to the matrix presentation for the canonical basis of $V$ (resp.\ $V^{\vee}$). For an $(i,j) \in \N^2$ with $1 \leq i,j \leq n$, we denote by $E_{i,j} \in \End_{\Z}(\Z^n)$ the matrix whose $(i,j)$-entry is $1$ and whose other entries are $0$. For any $(i,j,b) \in \N^3$ with $1 \leq i,j \leq n$ and $b \leq 1$, we denote by $E_{i,j,b} \in M(n,R,2)$ the image of $E_{i,j}$ in $\End_R(R^n_{\ell^2})$ when $b = 0$ and in $\End_R((R^n_{\ell^2})^{\vee})$ when $b = 1$. Then $((E_{i,j,b})_{i,j=1}^{n})_{b=0}^{1}$ forms a $\Z$-linear basis of the underlying $\Z$-module of $M(n,R,2)$. We denote by $((e_{i,j,b})_{i,j=1}^{n})_{b=0}^{1}$ the $\Z$-linear basis of the underlying $\Z$-module of $C(K_n)$ dual to $((E_{i,j,b})_{i,j=1}^{n})_{b=0}^{1}$.

\begin{proposition}
\label{norm of e_i,j,b}
For any $(i,j,b) \in \N^3$ with $1 \leq i,j \leq n$ and $b \leq 1$, the equality $|e_{i,j,b}|_{C(K_n)} = 1$ holds.
\end{proposition}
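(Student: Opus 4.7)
My plan is to establish $|e_{i,j,b}|_{C(K_n)} = 1$ by proving the two matching inequalities $\leq 1$ and $\geq 1$ separately. The overarching observation is that $e_{i,j,b}$, being dual to the basis element $E_{i,j,b}$, is nothing but the coordinate functional on $M(n,R,2) = \End_R(R^n_{\ell^2}) \times \End_R((R^n_{\ell^2})^{\vee})$ picking out the $(i,j)$-entry of the $b$-th factor.

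For the upper bound, I would unwind what the norm on $C(K_n)$ means: by construction it is the operator norm of Lipschitz functionals with respect to $|\cdot|_{M(n,R,2)}$, which in turn is the restriction of the norm on $M(n,(\R,|\cdot|_\infty,1),2)$. Hence for $m=(A_0,A_1)\in M(n,R,2)$ one has $e_{i,j,b}(m) = A_b[i,j]$, and the standard Cauchy--Schwarz estimate
$$
|A_b[i,j]| = |\langle A_b\delta_j,\delta_i\rangle| \leq \|A_b\delta_j\|_2 \leq \|A_b\|_{\mathrm{op},\ell^2}
$$
on the canonical basis $(\delta_k)$ of $\R^n$ (applied analogously to the dual basis when $b=1$) combines with shortness of the projections from the product to yield $|A_b[i,j]| \leq |m|_{M(n,R,2)}$, and hence $|e_{i,j,b}|_{C(K_n)} \leq 1$.

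For the lower bound I would simply test on $E_{i,j,b}$ itself: by duality of the bases, $e_{i,j,b}(E_{i,j,b}) = 1$, so it suffices to check that $|E_{i,j,b}|_{M(n,R,2)} \leq 1$. By the isometric nature of the coordinate embedding into the direct product, this reduces to showing that the standard matrix unit $E_{i,j}$ has $\ell^2$-operator norm equal to $1$, which is elementary: $E_{i,j}$ sends $\delta_j$ to $\delta_i$ and kills $\delta_k$ for $k\neq j$, so the supremum of $\|E_{i,j}x\|_2/\|x\|_2$ is attained at $x=\delta_j$ and equals $1$.

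The main technical point I anticipate is the careful handling of the norm on the direct product $\End_R(R^n_{\ell^2}) \times \End_R((R^n_{\ell^2})^{\vee})$ viewed as a Lipschitz $\Zbf$-algebra, since both steps above rely on shortness of the coordinate projections and on each factor embedding isometrically. These are standard properties of the categorical direct product in the appropriate category, but they are not spelled out in the preceding text, and a clean write-up should verify them explicitly before invoking them.
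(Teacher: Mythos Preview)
Your proposal is correct and follows essentially the same route as the paper's proof: both establish $\leq 1$ by bounding the matrix entry $|A_b[i,j]|$ by the $\ell^2$-operator norm of $A_b$ (hence by $|(A_0,A_1)|_{M(n,S,2)}$ with $S=(\R,|\cdot|_\infty,1)$), and both establish $\geq 1$ by evaluating at $E_{i,j,b}$ and using $|E_{i,j,b}|_{M(n,S,2)}=1$. Your write-up is in fact slightly more detailed than the paper's, which asserts the entry bound without spelling out the Cauchy--Schwarz step, and your caveat about verifying shortness of the projections and isometric embedding of factors in the direct product is well taken, as the paper does not make this explicit either.
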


\begin{proof}
Set $S = (\R,|\cdot|_{\infty},1)$. For any $(B_0,B_1) \in M(n,R,2)$ naturally regarded as an element of $M(n,S,2)$, we have
$$|e_{i,j,b}(B_0,B_1)|_{\infty} = |B_b[i,j]|_{\infty} \leq |(B_0,B_1)|_{M(n,S,2)}.$$
This implies $|e_{i,j,b}|_{\Cc(K_n)} \leq 1$. On the other hand, we have $|e_{i,j,b}(E_{i,j,b})|_{\infty} = |1|_{\infty} = 1$. This implies $|e_{i,j,b}|_{C(K_n)} \geq 1$ by $|E_{i,j,b}|_{M(n,S,2)} = 1$.
\end{proof}

\begin{proposition}
\label{criterion for the sufficient condition 2}
For any $((r_{i,j,b})_{i,j=1}^{n})_{b=0}^{1} \in ((R^n)^n)^2$, the equality $\|((|r_{i,j,b}|)_{i,j=1}^{n})_{b=0}^{1}\|_{\infty}\| \leq |\sum_{b=0}^{1} \sum_{i,j=1}^{n} r_{i,j,b} e_{i,j,b}|_{C(K_n)}$ holds.
\end{proposition}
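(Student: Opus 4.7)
The plan is to exploit the definition of $|\cdot|_{C(K_n)}$ as the dual operator norm and to evaluate the linear functional $\phi := \sum_{b=0}^{1}\sum_{i,j=1}^{n} r_{i,j,b}\, e_{i,j,b}$ on a single well-chosen basis matrix $E_{i_0,j_0,b_0}$, essentially reusing the computation already performed in the proof of Proposition \ref{norm of e_i,j,b}. By construction, $|\phi|_{C(K_n)}$ is the operator norm of $\phi$ viewed as a Lipschitz $\Zbf$-linear map from $M(n,R,2)$, equipped with the norm inherited from $M(n,(\R,|\cdot|_\infty,1),2)$, to $\Zbf$; hence for every nonzero $v \in M(n,R,2)$,
\[
|\phi|_{C(K_n)} \;\geq\; \frac{|\phi(v)|_\infty}{|v|_{M(n,(\R,|\cdot|_\infty,1),2)}}.
\]

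First I would fix a triple $(i_0,j_0,b_0)$ realizing $\max_{i,j,b}|r_{i,j,b}|$, which is exactly $\|((|r_{i,j,b}|)_{i,j=1}^{n})_{b=0}^{1}\|_\infty$, and apply this inequality with $v := E_{i_0,j_0,b_0}$. Biorthogonality of the dual bases $((E_{i,j,b}))$ and $((e_{i,j,b}))$ gives $\phi(E_{i_0,j_0,b_0}) = r_{i_0,j_0,b_0}$, and hence $|\phi(E_{i_0,j_0,b_0})|_\infty = |r_{i_0,j_0,b_0}|$. It then remains to show $|E_{i_0,j_0,b_0}|_{M(n,(\R,|\cdot|_\infty,1),2)} \leq 1$; substituting will yield $|\phi|_{C(K_n)} \geq |r_{i_0,j_0,b_0}|$, and the conclusion will follow by maximizing over $(i_0,j_0,b_0)$.

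The expected main point is this last norm estimate, which is also (implicitly) the key step in the proof of Proposition \ref{norm of e_i,j,b}. It follows directly from the Lipschitz direct-sum construction of \S \ref{Direct sums}: by definition, $|E_{i_0,j_0,b_0}|_{M(n,(\R,|\cdot|_\infty,1),2)}$ is an infimum over binary trees in $BT(E_{i_0,j_0,b_0})$, and since $E_{i_0,j_0,b_0}$ is supported in a single factor of the direct sum $\End_{\Lip(\R)}(\Lip(\R^n_{\ell^2})) \oplus \End_{\Lip(\R)}(\uHom(\Lip(\R^n_{\ell^2}),\Lip(\R)))$, it lies in the disjoint union $X$ used to color the leaves. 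The trivial one-leaf tree colored by $E_{i_0,j_0,b_0}$ is therefore admissible, and its value is the $\ell^2$-operator norm of the single-entry matrix $E_{i_0,j_0}$ (acting on either $\R^n_{\ell^2}$ or its dual), which equals $1$.

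Once this single-leaf bound is granted, the rest of the argument is a one-line unwinding of the dual-norm inequality above. The significance of the proposition is that it supplies precisely the hypothesis required to apply Corollary \ref{sufficient condition 2} with constant $D = 1$ to the dual basis $((e_{i,j,b}))$ inside $C(K_n)$, thereby justifying the use of Proposition \ref{representability-short-isometries-comonoid} in the preceding representability result for $\SIso(C(K_n),\sigma_{C(K_n)})$.
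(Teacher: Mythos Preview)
Your proposal is correct and follows essentially the same approach as the paper: evaluate $\phi$ at each $E_{i_0,j_0,b_0}$, use biorthogonality to get $\phi(E_{i_0,j_0,b_0}) = r_{i_0,j_0,b_0}$, and divide by $|E_{i_0,j_0,b_0}|_{M(n,S,2)} = 1$ in the operator-norm inequality. The only difference is cosmetic: the paper simply invokes $|E_{i_0,j_0,b_0}|_{M(n,S,2)} = 1$ (already recorded in the proof of Proposition~\ref{norm of e_i,j,b}) rather than re-justifying it, and note that $M(n,S,2)$ is introduced as a \emph{direct product} algebra, so the binary-tree direct-sum apparatus of \S\ref{Direct sums} is not needed here---the norm of $E_{i_0,j_0,b_0}$ is just the $\ell^2$-operator norm of $E_{i_0,j_0}$ in the relevant factor.
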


\begin{proof}
Set $S = (\R,|\cdot|_{\infty},1)$. For any $(i_0,j_0,b_0) \in \N^3$ with $1 \leq i_0,j_0 \leq n$ and $b_0 \leq 1$, we have
\begin{eqnarray*}
\left| \left( \sum_{b=0}^{1} \sum_{i,j=1}^{n} r_{i,j,b} e_{i,j,b} \right)(E_{i_0,j_0,b_0}) \right|_R = |r_{i_0,j_0,b_0}|_R.
\end{eqnarray*}
and hence
\begin{eqnarray*}
\left| \sum_{b=0}^{1} \sum_{i,j=1}^{n} r_{i,j,b} e_{i,j,b} \right|_R \geq \frac{|r_{i_0,j_0,b_0}|_R}{|E_{i_0,j_0,b_0}|_{M(n,S,2)}} = |r_{i_0,j_0,b_0}|_R.
\end{eqnarray*}
This implies the assertion.
\end{proof}

Let $(f,s) \in C(K_n)^{\vee} \times S$. We denote by $i_S(f,s)_0 \in \End_{S_0}(S_0^n)$ the $S_0$-linear endomorphism
\begin{eqnarray*}
S_0^n & \to & S_0^n \\
(t_j)_{j=1}^{n} & \mapsto & \left( \sum_{j=1}^{n} s f(e_{i,j,b}) t_j \right)_{i=1}^{n},
\end{eqnarray*}
and by $i_S(f,s)_1 \in \End_{S_0}(\uHom_{S_0}(S_0^n,S_0))$ the $S_0$-linear endomorphism
\begin{eqnarray*}
\uHom_{S_0}(S_0^n,S_0) & \to & \uHom_{S_0}(S_0^n,S_0) \\
u & \mapsto & \left( (t_i)_{i=1}^{n} \mapsto \sum_{i=1}^{n} s t_i u \left( (f(e_{i,j,b}))_{j=1}^{n} \right) \right).
\end{eqnarray*}
We put $i_S(f,s) = (i_S(f,s)_0,i_S(f,s)) \in \End_{S_0}(S_0^n) \times \End_{S_0}(\uHom_{S_0}(S_0^n,S_0))$. By Proposition \ref{abstract sum}, we have $i_S(f,s) \in M(n,S,q)$ for any $q \in [1,\infty]$.

\begin{proposition}
\label{boundedness of i_S}
For any $(f,s,q) \in C(K_n)^{\vee} \times S_0 \times [1,\infty]$, the inequality $|i_S(f,s)|_{M(n,S,q)} \leq n^{1+\frac{1}{q}} |f|_{C(K_n)^{\vee}} |s|_S$ holds.
\end{proposition}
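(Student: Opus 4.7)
The plan is to split the estimate into the two coordinates of the direct product algebra $M(n,S,q) = \End_{\Lip(S)}(\Lip(S^n_{\ell^q})) \times \End_{\Lip(S)}(\uHom_{\Lip(S)}(\Lip(S^n_{\ell^q}),\Lip(S)))$, since the norm on such a product is controlled (up to a universal constant coming from the binary-tree definition of direct sums in the Lipschitz setting) by the maximum of the two rescaled operator norms on the factors. It thus suffices to prove $|i_S(f,s)_b|_{op} \leq n^{1+1/q}\,|f|_{C(K_n)^\vee}\,|s|_S$ for each $b \in \{0,1\}$. The loose numerical constant $n^{1+1/q}$ (as opposed to the sharper $n$ a H\"older argument would give) is precisely what lets one absorb such universal constants without tracking them carefully.

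For $b=0$, I would evaluate $i_S(f,s)_0((t_j)_{j=1}^n)$ coordinate by coordinate. Because $p_S = 1$, iterating the triangle inequality yields $\bigl|\sum_{j=1}^n s f(e_{i,j,0})t_j\bigr|_S \leq \sum_{j=1}^n |s|_S\,|f(e_{i,j,0})|_S\,|t_j|_S$ for each $i$. Proposition \ref{norm of e_i,j,b} gives $|e_{i,j,0}|_{C(K_n)} = 1$, and combining this with the definition of the dual norm on $C(K_n)^\vee$ and the shortness of the structural morphism $R \to S$ (valid since $p_S = 1$) yields $|f(e_{i,j,0})|_S \leq |f|_{C(K_n)^\vee}$. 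The crude bounds $\sum_j |t_j|_S \leq n\,\|(t_j)\|_{S,\infty} \leq n\,\|(t_j)\|_{S,q}$ (the second valid because $q \geq 1$) then bound each output coordinate by $n\,|s|_S\,|f|_{C(K_n)^\vee}\,\|(t_j)\|_{S,q}$, and taking the $\ell^q$-norm over the $n$ output coordinates contributes the remaining factor $n^{1/q}$.

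For $b=1$ the argument is dual. Given a test element $u \in \uHom_{S_0}(S_0^n,S_0)$ I would first bound $|u((f(e_{i,j,1}))_{j=1}^n)|_S \leq |u|_{op}\,\|(f(e_{i,j,1}))_j\|_{S,q}$, then bound the $\ell^q$-norm of the vector $(f(e_{i,j,1}))_j$ by $n^{1/q}|f|_{C(K_n)^\vee}$ entrywise via Proposition \ref{norm of e_i,j,b}. Pairing the resulting expression against a test vector $(t_i)$ and repeating the same crude majorization $\sum_i |t_i|_S \leq n\,\|(t_i)\|_{S,q}$ produces the identical bound $n^{1+1/q}\,|s|_S\,|f|_{C(K_n)^\vee}\,|u|_{op}$. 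The main obstacle I anticipate is not either coordinatewise inequality, which is elementary, but rather the precise bookkeeping of the product-algebra norm: one must verify that the conventions of \S\,2.3 give a norm on $M(n,S,q)$ for which the maximum of the two operator norms is an upper bound up to constants comfortably absorbed by the non-sharp factor $n^{1+1/q}$.
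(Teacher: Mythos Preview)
Your proposal is correct and follows essentially the same argument as the paper: split into the two factors $b=0,1$, use $p_S=1$ to pass to $\ell^1$-sums coordinatewise, invoke Proposition~\ref{norm of e_i,j,b} to bound each $|f(e_{i,j,b})|$ by $|f|_{C(K_n)^\vee}$, and then apply the crude comparisons $\|\cdot\|_1 \leq n\,\|\cdot\|_\infty \leq n\,\|\cdot\|_q$ together with an extra $n^{1/q}$ from the $\ell^q$-norm over the $n$ output coordinates. The only cosmetic difference is the order in which the two crude inequalities are applied. As for your concern about the product-algebra norm on $M(n,S,q)$: the paper simply bounds each factor separately and passes directly to the conclusion, implicitly treating the product norm as the maximum of the two operator norms; so your bookkeeping worry, while reasonable, is not something the paper itself tracks.
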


\begin{proof}
For any $(t_i)_{i=1}^{n} \in S_0^n$, we have
\begin{eqnarray*}
& & \|i_S(f,s)_0((t_i)_{i=1}^{n})\|_{S,q} = \left\|\left(\sum_{j=1}^{n} s f(e_{i,j,0}) t_j \right)_{i=1}^{n} \right\|_{S,q} \\
&=& \left\|\left(\left|\sum_{j=1}^{n} s f(e_{i,j,0}) t_j \right|_S \right)_{i=1}^{n} \right\|_{q} \leq \left\|(\|(|s f(e_{i,j,0}) t_j|_S)_{j=1}^{n}\|_{1})_{i=1}^{n}\right\|_{q} \\
&\leq& \left\|(\|(|s|_S |f|_{op} |e_{i,j,0}|_{\Cc(K_n)} |t_j|_S)_{j=1}^{n}\right\|_{1})_{i=1}^{n}\|_{q} \\
&\leq& |s|_S |f|_{op} \left\|(\|(|t_j|_S)_{j=1}^{n}\|_{1})_{i=1}^{n}\right\|_{q} = n^{\frac{1}{q}} |s|_S |f|_{op} \|(|t_j|_S)_{j=1}^{n}\|_{1} \\
&\leq& n^{1+\frac{1}{q}} |s|_S |f|_{op} \|(|t_j|_S)_{j=1}^{n}\|_{{\infty}} \leq n^{1+\frac{1}{q}} |s|_S |f|_{op} \|(|t_j|_S)_{j=1}^{n}\|_{q} \\
&=& n^{1+\frac{1}{q}} |s|_S |f|_{\Cc(K_n)^{\vee}} \|(t_j)_{j=1}^{n}\|_{S,q}
\end{eqnarray*}
by $p_S = 1$ and Proposition \ref{norm of e_i,j,b}. This implies
\begin{eqnarray*}
|i_S(f,s)_0|_{\End_S(S^n_{\ell^q})} \leq n^{1+\frac{1}{q}} |f|_{\Cc(K_n)^{\vee}} |s|_S.
\end{eqnarray*}
For any $u \in \uHom_{S_0}(S_0^n,S_0)$ and $(t_i)_{i=1}^{n} \in S_0^n$, we have
\begin{eqnarray*}
& & |i_S(f,s)_1(u)((t_i)_{i=1}^{n})|_S = \left|\left(\sum_{i=1}^{n} s t_i u \left( (f(e_{i,j,1}))_{j=1}^{n} \right) \right) \right|_S \\
&\leq& \left\|\left( \left| s t_i u \left( (f(e_{i,j,1}))_{j=1}^{n} \right) \right|_S \right)_{i=1}^{n} \right\|_{1} \\
&\leq& |s|_S |u|_{op} \left\|\left( |t_i|_S \left\|(f(e_{i,j,1}))_{j=1}^{n}\right\|_{S,q} \right)_{i=1}^{n} \right\|_{1} \\
&\leq& |s|_S |u|_{op} \left\|\left( |t_i|_S \left\|(|f(e_{i,j,1})|_S)_{j=1}^{n}\right\|_{q} \right)_{i=1}^{n} \right\|_{1} \\
&\leq& |s|_S |u|_{op} \left\|\left( |t_i|_S \left\|(|f|_{op} |e_{i,j,1}|_{\Cc(K_n)})_{j=1}^{n}\right\|_{q} \right)_{i=1}^{n} \right\|_{1} \\
&\leq& |s|_S |u|_{op} \left\|\left( |t_i|_S \left\|(|f|_{op})_{j=1}^{n}\right\|_{q} \right)_{i=1}^{n} \right\|_{1} \leq n^{\frac{1}{q}} |s|_S |u|_{op} |f|_{op} \left\| (|t_i|_S)_{i=1}^{n} \right\|_{1} \\
&\leq& n^{1+\frac{1}{q}} |s|_S |u|_{op} |f|_{op} \left\| (|t_i|_S)_{i=1}^{n} \right\|_{{\infty}} \leq n^{1+\frac{1}{q}} |s|_S |u|_{op} |f|_{op} \left\| (|t_i|_S)_{i=1}^{n} \right\|_{q} \\
&=& n^{1+\frac{1}{q}} |s|_S |f|_{\Cc(K_n)^{\vee}} |u|_{\uHom_S(S^n_{\ell^q},S)} \left\| (t_i)_{i=1}^{n} \right\|_{S,q}
\end{eqnarray*}
again by $p_S = 1$ and Proposition \ref{norm of e_i,j,b}, and hence $|i_S(f,s)_1(u)|_{\uHom_S(S^n_{\ell^q},S)} \leq n^{1+\frac{1}{q}} |s|_S |f|_{\Cc(K_n)^{\vee}} |u|_{\uHom_S(S^n_{\ell^q},S)}$. This implies
\begin{eqnarray*}
|i_S(f,s)_1|_{\End_S(S^n_{\ell^q})} \leq n^{1+\frac{1}{q}} |f|_{\Cc(K_n)^{\vee}} |s|_S.
\end{eqnarray*}
Thus we obtain $|i_s(f,s)|_{M(n,S,q)} \leq n^{1+\frac{1}{q}} |f|_{C(K_n)^{\vee}} |s|_S$.
\end{proof}

For any $q \in [1,\infty]$, we obtain the Lipschitz $R$-bilinear map
\begin{eqnarray*}
i_S \colon C(K_n)^{\vee} \otimes^m S & \to & M(n,S,q) \\
(f,t) & \mapsto & i_S(f,t)
\end{eqnarray*}
whose Lipschitz constant is bounded by $n^{1+\frac{1}{q}}$ by Proposition \ref{boundedness of i_S}, and it induces a bounded $S_0$-linear homomorphism
\begin{eqnarray*}
\iota_{S,q} \colon \Cc(K_n)^{\vee} \otimes \Lip(S) \to M(n,S,q)
\end{eqnarray*}
whose Lipschitz constant is bounded by $n^{1+\frac{1}{q}}$ by Proposition \ref{universal-property-tensor-product} applied to $i_S$.

\begin{proposition}
\label{real orthogonal group}
When $S = (\R,|\cdot|_{\infty},1)$, the restriction of $\iota_{S,2}$ to
$$\SIso(C(K_n),\sigma_{C(K_n)})(\Rbf) \subset C(K_n)^{\vee} \otimes \Rbf = C(K_n)^{\vee} \otimes \Lip(S)$$
is a bijective map onto $\{(U,U^{-1}) \mid U \in O_n(\R)\}$.
\end{proposition}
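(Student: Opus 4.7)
The plan is to reduce the statement to a finite-dimensional real linear algebra problem, exploiting reflexivity of the $\ell^2$ operator norm together with the fact that $\Rbf$ is uniform and has multiplicative norm. The first step is to identify $\iota_{\Rbf,2}$ with an $\R$-linear isomorphism $C(K_n)^\vee\otimes\Rbf\cong M(n,\Rbf,2)$ that is isometric when the target carries the norm $|\cdot|_{M(n,\Rbf,2)}$. Indeed, by construction the norm on $C(K_n)$ is obtained as the restriction to the $\Z$-lattice of integer-valued functionals of the dual norm of $M(n,(\R,|\cdot|_\infty,1),2)$, so after scalar extension to $\Rbf$ the norm on $C(K_n)\otimes\Rbf$ coincides with the continuous dual norm of $M(n,\Rbf,2)$ (the dense $\Z$-lattice detects the full operator norm by homogeneity together with density of $\Q$-points in the unit sphere). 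Taking one more dual and invoking reflexivity of the $\ell^2$ operator norm in the finite-dimensional setting yields the identification; unwinding the definition of $i_\Rbf(f,s)$ on the basis $(e_{i,j,b})$ confirms that $\iota_{\Rbf,2}$ is exactly this canonical double-dual isomorphism.

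The second step is to translate the short isometry conditions under this identification. Since $\Rbf$ is uniform with submultiplicative (in fact multiplicative) norm, Proposition \ref{coincidence of SIso} gives
$$\SIso(C(K_n),\sigma_{C(K_n)})(\Rbf)=\{a\in C(K_n)^\vee\otimes\Rbf\mid \forall c\in C(K_n),\;|a(c)|_\Rbf\leq|c|_{C(K_n)},\;a\sigma_{C(K_n)}^\vee(a)=1\}.$$
Writing $\iota_{\Rbf,2}(a)=(U,V)\in \End(\R^n)\times \End((\R^n)^\vee)$, the first condition translates, via Step~1, to $|(U,V)|_{M(n,\Rbf,2)}\leq 1$, i.e.\ both $|U|_{op}\leq 1$ and $|V|_{op}\leq 1$ (with respect to the $\ell^2$ operator norm). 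The involution equation becomes $(U,V)\cdot(V^\vee,U^\vee)=(UV^\vee,VU^\vee)=(1,1)$, so $U$ is invertible and $V=(U^{-1})^\vee$. Under the canonical identification of $\End((\R^n)^\vee)$ with $\End(\R^n)$ via the dual basis, this is exactly what the statement denotes by $U^{-1}$.

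It then remains to observe that any $U\in \End(\R^n)$ satisfying $|U|_{op}\leq 1$ and $|U^{-1}|_{op}\leq 1$ (the latter being equal to $|V|_{op}$, since dualization preserves the $\ell^2$ operator norm) must be orthogonal: for every $x\in\R^n$ one has $|x|_2=|U^{-1}Ux|_2\leq|Ux|_2\leq|x|_2$, forcing $|Ux|_2=|x|_2$ and hence $U\in O_n(\R)$. Conversely, for every $U\in O_n(\R)$ the pair $(U,(U^{-1})^\vee)$ has operator norm one in each factor and satisfies the involution equation, so lies in the image.

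I expect the main obstacle to be the first step, i.e.\ upgrading the Lipschitz bound of Proposition \ref{boundedness of i_S} (which only gives a constant $n^{3/2}$ for $q=2$) to an honest isometric identification. This requires carefully unwinding the restriction defining the norm on $C(K_n)$, showing that integer and real suprema agree for the dual norm by a density argument, and then using finite-dimensional reflexivity of the $\ell^2$ operator norm to recover $M(n,\Rbf,2)$ on the nose rather than up to an equivalence of norms.
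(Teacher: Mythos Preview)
Your proposal is correct and follows essentially the same route as the paper. The paper packages your Step~1 into a chain of lemmas (Lemmas \ref{dual of lattice}--\ref{isometry of i_real}) establishing that $\iota_{S,2}$ is an isometric isomorphism via the same density-of-lattice and finite-dimensional reflexivity arguments you outline, proves separately (Lemma \ref{closed unit ball of real counterpart}) that the pointwise condition $|a(c)|_S\leq|c|_{C(K_n)}$ is equivalent to $|\iota_{S,2}(a)|_{M(n,S,2)}\leq 1$, and then concludes exactly as you do using Proposition \ref{coincidence of SIso} and the characterization $O_n(\R)=\{U\in\GL_n(\R)\mid |U|_{op}=|U^{-1}|_{op}=1\}$.
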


In order to show Proposition \ref{real orthogonal group}, we prepare lemmata.

\begin{lemma}
\label{dual of lattice}
Set $S = (\R,|\cdot|_{\infty},1)$. Let $M$ be an $S$-module $M$ whose underlying $\R$-vector space is of dimension $d \in \N$, and $V \subset M$ an $R$-submodule whose underlying $\Z$-module is a free $\Z$-module of rank $d$. Then the natural embedding $V^{\vee} \to \uHom_S(M,S)$ is isometric.
\end{lemma}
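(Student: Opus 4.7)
The plan is to show that the natural embedding
$\iota : V^\vee \to \uHom_S(M,S)$ sends a Lipschitz $\Z$-linear form
$f : V \to \Z$ to its unique $\R$-linear extension $\tilde f : M \to \R$ (existing and unique because the rank-$d$ hypothesis together with $\dim_\R M = d$ forces the $\Z$-basis of $V$ to be an $\R$-basis of $M$, i.e.\ $V$ is a lattice of full rank in $M$), and that this extension has the same operator norm as $f$.

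First I would handle the easy inequality $|f|_{V^\vee} \leq |\tilde f|_{\uHom_S(M,S)}$: the inclusion $V \hookrightarrow M$ is isometric by definition of the norm on $V$ as the restriction of $|\cdot|_M$, and $\tilde f$ restricted to $V$ is $f$, so the supremum of $|f(v)|_\infty / |v|_M$ over $v \in V \setminus \{0\}$ is bounded by the supremum of $|\tilde f(m)|_\infty / |m|_M$ over $m \in M \setminus \{0\}$.

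For the reverse inequality, the key ingredient is that $V_\Q := \bigcup_{q \in \Z_{>0}} q^{-1} V$ is dense in $M$ for the norm topology. This follows because any $\Z$-basis $(v_1,\dots,v_d)$ of $V$ is also an $\R$-basis of $M$, so expressing elements in coordinates reduces density to the density of $\Q$ in $\R$ (using the equivalence of norms on the finite-dimensional $\R$-vector space $M$). Given $m \in M$, choose $w_n \in V$ and $q_n \in \Z_{>0}$ with $w_n/q_n \to m$. Since $\tilde f$ is $\R$-linear on the finite-dimensional space $M$ it is automatically continuous, so $\tilde f(w_n/q_n) \to \tilde f(m)$ and $|w_n/q_n|_M \to |m|_M$. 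For each $n$,
\[
|\tilde f(w_n/q_n)|_\infty \;=\; \frac{|f(w_n)|_\infty}{q_n} \;\leq\; \frac{|f|_{V^\vee}\cdot |w_n|_M}{q_n} \;=\; |f|_{V^\vee}\cdot |w_n/q_n|_M,
\]
and passing to the limit gives $|\tilde f(m)|_\infty \leq |f|_{V^\vee}\cdot |m|_M$, so $|\tilde f|_{\uHom_S(M,S)} \leq |f|_{V^\vee}$.

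The main subtlety — not really an obstacle but worth isolating — is justifying that the $\Z$-basis of $V$ is $\R$-linearly independent in $M$, so that $V \otimes_\Z \R \cong M$ and the extension $\tilde f$ is both well defined and unique; without this, the ``natural embedding'' in the statement would not even make sense. In the intended applications (e.g.\ $V = \Z^n \subset \R^n = M$) this holds tautologically, and in general it is the implicit content of calling $V$ a rank-$d$ lattice in a $d$-dimensional real space. Once that is in place, everything else is a density and continuity argument.
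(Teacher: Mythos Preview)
Your proof is correct and follows essentially the same approach as the paper: both argue the easy inequality from $V\subset M$ being isometric, and the reverse inequality from the density of $V\otimes_{\Z}\Q$ in $M$ together with continuity of the extension. Your sequential-limit formulation is a mild stylistic variation of the paper's $\epsilon$-approximation, and your explicit remark that the rank hypothesis forces the $\Z$-basis of $V$ to be an $\R$-basis of $M$ (so that the ``natural embedding'' is well defined) makes explicit a point the paper leaves implicit.
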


\begin{proof}
We denote by $i$ the natural embedding in the assertion. We show $||f|_{op} - |i(f)|_{op}| < \epsilon$ for any $(f,\epsilon) \in V^{\vee} \times ]0,\infty[$. By the definition of $|f|_{op}$, there exists a $v \in V$ such that $|f(v)|_V > (|f|_{op} - \epsilon)|v|_V$. We obtain
\begin{eqnarray*}
|i(f)(v)|_M = |f(v)|_V > (|f|_{op} - \epsilon)|v|_V = (|f|_{op} - \epsilon)|v|_M.
\end{eqnarray*}
This implies $|i(f)|_{op} > |f|_{op} - \epsilon$. Let $m \in M$. Since the image of $V \otimes_{\Z} \Q$ is dense in $M$, there exists a $(v,s) \in V \times (\Z \setminus \{0\})$ such that $|m - s^{-1}v| < 2^{-1} \min \{|f|_{op}^{-1},|i(f)|_{op}^{-1}\} |m|_M^{-1} \epsilon$. We have
\begin{eqnarray*}
i(f)(m) = i(f)(s^{-1}v) + i(f)(m - s^{-1}v) = s^{-1} f(v) + i(f)(m - s^{-1}v)
\end{eqnarray*}
and hence
\begin{eqnarray*}
& & |i(f)(m)|_S = |s^{-1} f(v) + i(f)(m - s^{-1}v)|_S \leq |s^{-1}|_S |f|_{op} |v|_V + |i(f)|_{op} |m - s^{-1}v|_S \\
&<& |s^{-1}|_S |f|_{op} |v|_V + 2^{-1} \epsilon = |s^{-1}|_S |f|_{op} |v|_M + 2^{-1} |m|_M \epsilon = |f|_{op} |s^{-1}v|_M + 2^{-1} |m|_M \epsilon \\
&\leq& |f|_{op}(|m|_M + |m-s^{-1}v|_M) + 2^{-1} |m|_M \epsilon\\
&<& |f|_{op} |m|_M + 2^{-1} |m|_M \epsilon + 2^{-1} |m|_M \epsilon = (|f|_{op} + \epsilon) |m|_M.
\end{eqnarray*}
It implies $|i(f)|_{op} \leq |f|_{op} + \epsilon$.
\end{proof}

\begin{lemma}
\label{embedding into the dual of M(n,S,2)}
When $S = (\R,|\cdot|_{\infty},1)$, the natural embedding $C(K_n) \to \uHom_S(M(n,S,2),S)$ is an isometry.
\end{lemma}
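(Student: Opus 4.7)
The plan is to reduce the statement to a direct application of Lemma \ref{dual of lattice}. The key observation is that, by the very definition given just before the statement, $C(K_n)$ is the $\Zbf$-linear dual of $M(n,R,2)$ equipped with the norm pulled back along the natural inclusion
$$M(n,R,2) = M(n,(\Z,|\cdot|_\infty,1),2) \hookrightarrow M(n,(\R,|\cdot|_\infty,1),2) = M(n,S,2).$$
Thus the norm on $C(K_n)$ is literally the operator norm of $\Z$-linear functionals on $M(n,R,2)$ viewed as a subset of $M(n,S,2)$ with the induced norm.

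First I would set up the hypotheses of Lemma \ref{dual of lattice} with $M = M(n,S,2)$ and $V = M(n,R,2)$. The underlying $\R$-vector space of $M(n,S,2) = \End_S(S^n_{\ell^2}) \times \End_S((S^n_{\ell^2})^\vee)$ has dimension $d = 2n^2$; the underlying $\Z$-module of $M(n,R,2)$ is a free $\Z$-module of rank $2n^2$ with basis $((E_{i,j,b})_{i,j=1}^n)_{b=0}^1$, which is simultaneously an $\R$-basis of $M(n,S,2)$. Hence $V \subset M$ is a full-rank $\Z$-lattice, and $V \otimes_\Z \Q$ is automatically dense in $M$.

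Next I would identify $V^{\vee}$ in the sense of Lemma \ref{dual of lattice} with $C(K_n)$ as normed $\Zbf$-modules. Indeed, on the level of $\Z$-modules both are the $\Z$-linear dual of $M(n,R,2)$, and their norms coincide because both are computed as the operator norm of a $\Z$-linear functional $f \colon M(n,R,2) \to \Z$ with respect to the pullback of $|\cdot|_{M(n,S,2)}$. The natural embedding $V^{\vee} \hookrightarrow \uHom_S(M,S)$ of the lemma is then exactly the map sending $f \in C(K_n)$ to its unique $\R$-linear extension $\tilde f \colon M(n,S,2) \to \R$, which is the embedding in the statement.

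Finally, an application of Lemma \ref{dual of lattice} yields the isometry. The only step where one might worry is the verification that the norm on $M(n,R,2)$ inherited inside $M(n,S,2)$ really agrees with the one used to define $C(K_n)$; but this is tautological from the definition of $C(K_n)$ in \S\ref{short-isometries-GLn}, so no computation is required and there is no genuine obstacle.
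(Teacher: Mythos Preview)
Your proposal is correct and is exactly the approach the paper takes: the paper's own proof simply says the assertion follows immediately from Lemma~\ref{dual of lattice} together with the definition of $|\cdot|_{C(K_n)}$ as the restriction of $|\cdot|_{M(n,S,2)}$ to $M(n,R,2)$. You have merely spelled out the verification of the hypotheses in more detail.
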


\begin{proof}
The assertion immediately follows from Lemma \ref{dual of lattice} by the definition of $|\cdot|_{\Cc(K_n)}$ using the restriction of $|\cdot|_{M(n,S,2)}$ to $M(n,R,2)$.
\end{proof}

\begin{lemma}
\label{embedding into the second dual of M(n,S,2)}
When $S = (\R,|\cdot|_{\infty},1)$, the natural embedding $C(K_n)^{\vee} \to \uHom_S(\uHom_S(M(n,S,2),S),S)$ is an isometry.
\end{lemma}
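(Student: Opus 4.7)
The plan is to recognize that this lemma is a direct application of Lemma \ref{dual of lattice}, once the isometric embedding of the previous lemma has been used to regard $C(K_n)$ as a lattice in $\uHom_S(M(n,S,2),S)$.

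First I would set $M := \uHom_S(M(n,S,2),S)$ and observe that $M(n,S,2) = \End_S(S^n_{\ell^2}) \times \End_S((S^n_{\ell^2})^\vee)$ is an $\R$-vector space of dimension $2n^2$, so $M$ itself has $\R$-dimension $d = 2n^2$. On the other side, the underlying $\Z$-module of $C(K_n)$ is free of rank $2n^2$, with the explicit basis $((e_{i,j,b})_{i,j=1}^{n})_{b=0}^{1}$ dual to the canonical $\Z$-basis $((E_{i,j,b})_{i,j=1}^{n})_{b=0}^{1}$ of $M(n,R,2)$. By Lemma \ref{embedding into the dual of M(n,S,2)}, the natural map $C(K_n) \to M$ is an isometric embedding, so $C(K_n)$ sits inside $M$ as an $R$-submodule whose underlying $\Z$-module is free of rank $d$, which is precisely the hypothesis required by Lemma \ref{dual of lattice}.

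Applying Lemma \ref{dual of lattice} to the pair $(V,M) = (C(K_n), \uHom_S(M(n,S,2),S))$ then yields that the natural embedding
$$C(K_n)^\vee \to \uHom_S(\uHom_S(M(n,S,2),S),S)$$
is an isometry, which is exactly the assertion. Since the genuinely analytic content is already packaged in Lemmata \ref{dual of lattice} and \ref{embedding into the dual of M(n,S,2)}, the argument reduces to a matter of checking hypotheses and unwinding notation; the only point demanding a moment of care is the implicit identification between the duality $(-)^\vee$ used to form $C(K_n)^\vee$ and the $\uHom_S(-,S)$ appearing in the target, which amounts to the standard compatibility between $\Z$-linear and $\R$-linear duals on a full $\Z$-lattice in a finite-dimensional $\R$-vector space. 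This compatibility is exactly what Lemma \ref{dual of lattice} encodes, so no additional obstacle arises.
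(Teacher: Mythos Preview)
Your proof is correct and follows exactly the same approach as the paper, which simply states that the assertion immediately follows from Lemma \ref{dual of lattice} and Lemma \ref{embedding into the dual of M(n,S,2)}. You have merely made explicit the verification of hypotheses that the paper leaves implicit.
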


\begin{proof}
The assertion immediately follows from Lemma \ref{dual of lattice} and Lemma \ref{embedding into the dual of M(n,S,2)}.
\end{proof}

\begin{lemma}
\label{isometry of restriction of i_real}
When $S = (\R,|\cdot|_{\infty},1)$, the composite of the scalar extension $C(K_n)^{\vee} \to C(K_n)^{\vee} \otimes \Rbf = C(K_n)^{\vee} \otimes \Lip(S)$ and
$$\iota_{S,2} \colon \Cc(K_n)^{\vee} \otimes \Lip(S) \to M(n,S,2)$$ is an isometry.
\end{lemma}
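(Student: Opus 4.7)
The plan is to identify the composite
$C(K_n)^{\vee} \to C(K_n)^{\vee} \otimes \Rbf \to M(n,S,2)$, sending $f$ to $i_S(f,1)$, with the composition of two isometries: the embedding of Lemma \ref{embedding into the second dual of M(n,S,2)} and the canonical reflexivity isomorphism $\uHom_S(\uHom_S(M(n,S,2),S),S) \cong M(n,S,2)$, which is an isometry because $M(n,S,2)$ is a finite-dimensional real Banach space.

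First, I would expand $i_S(f,1)$ in the canonical $\R$-basis $(E_{i,j,b})_{i,j,b}$ of $M(n,S,2)$. Reading off the defining formulas of $i_S(f,1)_0 \in \End_{S_0}(S_0^n)$ and $i_S(f,1)_1 \in \End_{S_0}(\uHom_{S_0}(S_0^n,S_0))$ immediately yields
$$
i_S(f,1) \;=\; \sum_{b=0}^{1}\sum_{i,j=1}^{n} f(e_{i,j,b})\,E_{i,j,b}.
$$

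Next, I would trace the embedding $\tau' \colon C(K_n)^{\vee} \to \uHom_S(\uHom_S(M(n,S,2),S),S)$ of Lemma \ref{embedding into the second dual of M(n,S,2)} through the dual basis. Since $e_{i,j,b}$ is by construction the $\Z$-linear functional on $M(n,R,2)$ dual to $E_{i,j,b}$, its image under the embedding $\alpha \colon C(K_n) \hookrightarrow \uHom_S(M(n,S,2),S)$ of Lemma \ref{embedding into the dual of M(n,S,2)} is the $\R$-linear dual basis functional $\epsilon_{i,j,b}$ of $E_{i,j,b}$ in $M(n,S,2)$. Hence $\tau'(f)(\epsilon_{i,j,b}) = f(e_{i,j,b})$, and under the reflexivity isomorphism the element $\tau'(f)$ corresponds to the unique $m \in M(n,S,2)$ characterized by $\epsilon_{i,j,b}(m) = f(e_{i,j,b})$ for all $(i,j,b)$; expanding $m$ in the basis $(E_{i,j,b})$ gives $m = \sum_{i,j,b} f(e_{i,j,b})\,E_{i,j,b} = i_S(f,1)$.

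Combining these two observations shows that the composite in question equals $\tau'$ followed by the reflexivity isometry, and is therefore itself an isometry. The main step requiring care is the coordinate identification in the preceding paragraph: matching the dual bases and verifying that the coordinates of $\tau'(f)$ recover the matrix entries of $i_S(f,1)_0$ and $i_S(f,1)_1$. Once this is in place, the isometry claim follows directly from Lemma \ref{embedding into the second dual of M(n,S,2)} and finite-dimensional reflexivity, without any further analytic input.
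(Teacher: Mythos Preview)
Your proof is correct and follows essentially the same approach as the paper: factor the composite through the isometric embedding of Lemma~\ref{embedding into the second dual of M(n,S,2)} and the reflexivity isometry $M(n,S,2)\cong \uHom_S(\uHom_S(M(n,S,2),S),S)$ for finite-dimensional real Banach spaces. You actually supply more detail than the paper, which simply asserts that the two composites agree; your explicit coordinate computation $i_S(f,1)=\sum_{i,j,b} f(e_{i,j,b})E_{i,j,b}$ and the dual-basis tracking make that identification transparent.
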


\begin{proof}
By Lemma \ref{embedding into the second dual of M(n,S,2)}, the natural embedding
\begin{eqnarray*}
i_0 \colon \Cc(K_n)^{\vee} \to \uHom_S(\uHom_S(M(n,S,2),S),S)
\end{eqnarray*}
is an isometry. We denote by $i_1$ the composite of the scalar multiplication $C(K_n)^{\vee} \to C(K_n)^{\vee} \otimes \Rbf$ and $\iota_{S,2}$. Since $i_0$ is an isometry and coincides with the composite of $i_1$ and the second dual embedding $M(n,S,2) \to \uHom_S(\uHom_S(M(n,S,2),S),S)$, which is an isometry by the classical real analysis, $i_1$ is an isometry.
\end{proof}

\begin{lemma}
\label{presentation of tensor}
When $S = (\R,|\cdot|_{\infty},1)$ or $S = (\Q_p,|\cdot|_p,\infty)$, then for any $a \in C(K_n)^{\vee} \otimes \Lip(S)$, the equality
$$
\begin{array}{c}
|a|_{C(K_n)^{\vee} \otimes \Lip(S)}\\
\Vert\\
\inf \left\{\|(|f_k|_{C(K_n)^{\vee}} |s_k|_{S})_{k=1}^{K}\|_{\{1,\ldots,K\},p_{\Rbf}}
\:
\begin{array}{|l}
K \in \N, (f_k)_{k=1}^{K} \in (C(K_n)^{\vee})^K,\\
(s_k)_{k=1}^{K} \in S^K, a = \sum_{k=1}^{K} f_k \otimes s_k
\end{array}\right\}
\end{array}
$$
holds.
\end{lemma}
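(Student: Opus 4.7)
The plan is to prove both directions of the claimed equality by unpacking the definition of $C(K_n)^{\vee} \otimes \Lip(S)$ as the completion of a quotient of the free Lipschitz module on $C(K_n)^{\vee} \otimes^{m} \Lip(S)$ by bilinearity relations, as constructed in Proposition \ref{free-module-p} and Proposition \ref{universal-property-tensor-product}.

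For the inequality $|a|_{C(K_n)^{\vee} \otimes \Lip(S)} \leq \inf\{\cdots\}$, I would first observe that the canonical $\Lip(S)$-bilinear map $C(K_n)^{\vee} \otimes^{m} \Lip(S) \to C(K_n)^{\vee} \otimes \Lip(S)$ is short, so $|f \otimes s|_{C(K_n)^{\vee} \otimes \Lip(S)} \leq |f|_{C(K_n)^{\vee}} |s|_S$ on every pure tensor. Given a finite presentation $a = \sum_{k=1}^{K} f_k \otimes s_k$, iterating the Lipschitz triangular inequality along binary-tree decompositions of the sum bounds $|a|_{C(K_n)^{\vee} \otimes \Lip(S)}$ by $\|(|f_k|_{C(K_n)^{\vee}} |s_k|_S)_{k=1}^{K}\|_{\{1,\ldots,K\},p_{\Rbf}}$, since the latter is by definition the infimum of such binary-tree bounds. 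Taking the infimum over presentations of $a$ yields the $\leq$ direction.

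For the reverse inequality, I would invoke the quotient construction directly: the norm on the algebraic tensor product is the quotient of the free Lipschitz-module seminorm under bilinearity relations, followed by completion. The free-module seminorm on a formal sum of generators is a binary-tree infimum in which each generator $[(f,s)]$ contributes the multiplicative norm $|f|_{C(K_n)^{\vee}} |s|_S$. Thus, for an element of the algebraic tensor product, the quotient norm coincides exactly with the infimum, over all finite presentations $a = \sum_{k} f_k \otimes s_k$, of $\|(|f_k|_{C(K_n)^{\vee}} |s_k|_S)_{k=1}^{K}\|_{\{1,\ldots,K\},p_{\Rbf}}$. For a general $a$ in the completion, one passes to the limit: approximate $a$ by elements of the algebraic tensor product whose norms converge to $|a|_{C(K_n)^{\vee} \otimes \Lip(S)}$, concatenate their presentations to produce presentations of $a$ whose binary-tree bound converges to the same value, and conclude by continuity.

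The main obstacle will be the careful bookkeeping that identifies the binary-tree seminorm on the free Lipschitz module over the normed set $C(K_n)^{\vee} \otimes^{m} \Lip(S)$ with the binary-tree expression $\|\cdot\|_{\{1,\ldots,K\},p_{\Rbf}}$ applied to the tuple $(|f_k|_{C(K_n)^{\vee}} |s_k|_S)_{k=1}^{K}$; the key ingredient is the multiplicativity $|(f,s)|_{C(K_n)^{\vee} \otimes^{m} \Lip(S)} = |f|_{C(K_n)^{\vee}} |s|_S$ inherent in the multiplicative monoidal structure. The specialization of $S$ to $\Rbf$ or $\Qbf_p$ intervenes only in the topological step guaranteeing that the infimum is attained in the completion: in the real case through density of $\Q$ in $\R$ (which permits redistributing scalar coefficients between the two tensor factors within arbitrary $\epsilon$), and in the $p$-adic case through the ultrametric collapse of the binary-tree infimum to a sup norm (making the limiting argument transparent).
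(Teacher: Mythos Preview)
Your $\leq$ direction is fine (and essentially tautological from the construction), and your identification of the quotient norm on the \emph{algebraic} tensor product with the right-hand infimum is correct. The gap is in your passage to the completion. You write ``approximate $a$ by elements of the algebraic tensor product \ldots\ concatenate their presentations to produce presentations of $a$'': but a presentation of an approximant $a_n$ is not a presentation of $a$, and concatenating finitely many of them still presents $a_n$, not $a$. Without further input, there is no reason an element of the completion should admit any finite presentation $a=\sum_{k=1}^K f_k\otimes s_k$ at all, in which case the set on the right is empty and the infimum is $+\infty$.

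The paper's proof dispatches exactly this point in one line, using a fact you never invoke: $C(K_n)^{\vee}$ is a free $\Z$-module of rank $2n^2$, so $C(K_n)^{\vee}\otimes\Lip(S)$ is a finite-dimensional vector space over the complete valued field $S$. The image of the algebraic tensor product is a dense linear subspace of a finite-dimensional normed $S$-vector space, hence closed, hence all of $C(K_n)^{\vee}\otimes\Lip(S)$. Thus every element already has a finite presentation, and the equality reduces to the definition of the quotient norm. Your closing remarks about density of $\Q$ in $\R$ and ultrametric collapse are not the mechanism at work; the role of the hypothesis $S\in\{(\R,|\cdot|_\infty,1),(\Q_p,|\cdot|_p,\infty)\}$ is precisely that these are complete fields over which finite-dimensional subspaces are automatically closed.
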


\begin{proof}
In both cases, the composite of the inclusion
$$\Z^{\oplus (C(K_n)^{\vee} \times S)} \hookrightarrow \ell^{p_S}(C(K_n)^{\vee} \otimes^m S,R_{p_S})$$
and the canonical projection
$$\ell^{p_S}(C(K_n)^{\vee} \otimes^m S,R)_{p_S} \twoheadrightarrow C(K_n)^{\vee} \otimes \Lip(S)$$
has the dense image by construction, and hence is surjective by the classical fact that every subspace of a finite dimensional normed $S$-vector space is closed. Therefore the assertion follows from the definition of the quotient norm.
\end{proof}

\begin{lemma}
\label{isometry of i_real}
When $S = (\R,|\cdot|_{\infty},1)$, the morphisme
$$\iota_{S,2} \colon \Cc(K_n)^{\vee} \otimes \Lip(S) \to M(n,S,2)$$
is an isometric isomorphism.
\end{lemma}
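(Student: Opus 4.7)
The plan is to upgrade the isometric embedding of the $\Z$-lattice $C(K_n)^\vee$ into $M(n,S,2)$ given by Lemma \ref{isometry of restriction of i_real} to a full isometric isomorphism via a density argument. First I would establish that $\iota_{S,2}$ is an $\R$-linear bijection: both $C(K_n)^\vee \otimes \Lip(S)$ and $M(n,S,2)$ have underlying real vector space of dimension $2n^2$, and $\iota_{S,2}$ sends the dual basis in $C(K_n)^\vee$ of $(e_{i,j,b})$ (extended by $1 \in \Lip(S)$) to the $\Z$-basis $(E_{i,j,b})$ of $M(n,R,2) \subset M(n,S,2)$. Since the tensor product is finite-dimensional over $\R$, its completion in any norm topology coincides with the algebraic tensor product, so no subtle completion issues intervene.

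For the short direction $|\iota_{S,2}(a)|_{M(n,S,2)} \leq |a|_{C(K_n)^\vee \otimes \Lip(S)}$, I would invoke the presentation formula of Lemma \ref{presentation of tensor}. Given $a = \sum_{k=1}^{K} f_k \otimes r_k$, $\R$-linearity yields $\iota_{S,2}(a) = \sum_k r_k\, \iota_{S,2}(f_k \otimes 1)$, Lemma \ref{isometry of restriction of i_real} identifies each $|\iota_{S,2}(f_k \otimes 1)|_{M(n,S,2)}$ with $|f_k|_{C(K_n)^\vee}$, and the ordinary $\ell^1$-triangle inequality for the operator norm gives $|\iota_{S,2}(a)|_{M(n,S,2)} \leq \sum_k |r_k|_S\, |f_k|_{C(K_n)^\vee}$. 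The observation that at every internal node of a binary tree with Lipschitz constant $C = 2$ one has $2\max(v_\ell, v_r) \geq v_\ell + v_r$ lets a straightforward induction show that the Lipschitz tree norm $\|\cdot\|_{\{1,\ldots,K\}, p_\Rbf}$ dominates the $\ell^1$-sum; taking the infimum over presentations then gives the desired bound.

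For the reverse inequality I would use density together with the single-term presentation trick. The image $\iota_{S,2}(C(K_n)^\vee)$ is a full-rank $\Z$-lattice in $M(n,S,2)$, so $\iota_{S,2}(C(K_n)^\vee \otimes \Q)$ is dense; approximating $\iota_{S,2}(a)$ by a sequence $\iota_{S,2}(b_m \otimes (1/N_m))$ for suitable $b_m \in C(K_n)^\vee$ and $N_m \in \Z \setminus \{0\}$, the one-term presentation yields $|b_m \otimes (1/N_m)|_{C(K_n)^\vee \otimes \Lip(S)} \leq |b_m|_{C(K_n)^\vee}/|N_m|$, which Lemma \ref{isometry of restriction of i_real} identifies with $|\iota_{S,2}(b_m \otimes (1/N_m))|_{M(n,S,2)}$. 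By equivalence of all norms on the finite-dimensional real vector space at stake, convergence in $M(n,S,2)$ forces convergence in the tensor-product norm, so passing to the limit delivers $|a|_{C(K_n)^\vee \otimes \Lip(S)} \leq |\iota_{S,2}(a)|_{M(n,S,2)}$.

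The main obstacle, beyond bookkeeping, lies in matching the Lipschitz tree norm (with its loose $C = 2$ rescaling at each node) to the sharp $\ell^1$-triangle inequality available for the operator norm on $M(n,S,2)$. The decisive point is that the constant $C = 2$ works in the favorable direction: the inequality $2\max(v_\ell, v_r) \geq v_\ell + v_r$ goes the right way for the short direction, so one does not need to search for an optimal tree or an optimal presentation, only to use the naive ones. Without this observation the short direction would collapse, and one would need a genuinely delicate analysis of optimal presentations, rather than a clean appeal to Lemma \ref{isometry of restriction of i_real} applied to one-term expressions after rational approximation.
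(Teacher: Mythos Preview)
Your proposal is correct and follows essentially the same route as the paper: both arguments combine Lemma \ref{isometry of restriction of i_real} on the integral lattice with Lemma \ref{presentation of tensor}, use the inequality $2\max(v_\ell,v_r)\geq v_\ell+v_r$ to bound the Lipschitz tree norm below by the $\ell^1$-sum for the short direction, and then obtain the reverse inequality by approximating with one-term rational presentations (the paper phrases this last step as a contradiction with an $\epsilon/2$ computation, you phrase it as a direct limit via equivalence of norms in finite dimension, but the content is identical). The bijectivity via the $2n^2$ dimension count is also the same.
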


\begin{proof}
Let $a \in C(K_n)^{\vee} \otimes \Rbf$. Let $\sum_{k=1}^{K} f_k \otimes s_k$ be a presentation of $a$ with $K \in \N$. By Lemma \ref{isometry of restriction of i_real}, we have
\begin{eqnarray*}
& & \|(|f_k|_{C(K_n)^{\vee}} |s_k|_S)_{k=1}^{K}\|_{\{1,\ldots,K\},p_{\Rbf}} = \|(|f_k|_{op} |s_k|_S)_{k=1}^{K}\|_{\{1,\ldots,K\},(2,1)} \\
&\leq& \|(|f_k|_{op} |s_k|_S)_{k=1}^{K}\|_{1} = \|(|\iota_{S,2}(f_k \otimes 1)|_{M(n,S,2)} |s_k|_S)_{k=1}^{K}\|_{1}\\
&=& \|(|\iota_{S,2}(f_k \otimes s_k)|_{M(n,S,2)})_{k=1}^{K}\|_{1} \geq \left|\sum_{k=1}^{K} \iota_{S,2}(f_k,s_k) \right|_{M(n,S,2)}\\
&=& \left|\iota_{S,2} \left(\sum_{k=1}^{K} f_k \otimes s_k \right) \right|_{M(n,S,2)} = |\iota_{S,2}(a)|_{M(n,S,2)}.
\end{eqnarray*}
By Lemma \ref{presentation of tensor}, we obtain $|a|_{C(K_n)^{\vee} \otimes \Rbf} \geq |\iota_{S,2}(a)|_{M(n,S,2)}$.

\vspace{0.1in}
Assume $|a|_{C(K_n)^{\vee} \otimes \Rbf} > |\iota_{S,2}(a)|_{M(n,S,2)}$ and set
$$\epsilon := 2^{-1}(|a|_{C(K_n)^{\vee} \otimes \Rbf} - |\iota_{S,2}(a)|_{M(n,S,2)}) > 0.$$
Since the algebraic tensor product $C(K_n)^{\vee} \otimes_{\Z} \Q$ is dense in $C(K_n)^{\vee} \otimes \Rbf$, there exists an $(f,s) \in C(K_n)^{\vee} \times \Z$ with $s \neq 0$ and $|a - f \otimes s^{-1}|_{C(K_n)^{\vee} \otimes \Rbf} < \epsilon$. By Lemma \ref{isometry of restriction of i_real}, we have $|\iota_{S,2}(f \otimes 1)|_{M(n,S,2)} = |f|_{C(K_n)^{\vee}}$. We obtain
\begin{eqnarray*}
& & |a|_{C(K_n)^{\vee} \otimes \Rbf} \leq |f \otimes s^{-1}|_{C(K_n)^{\vee} \otimes \Rbf} + \epsilon\\
&=& |s^{-1}|_{\infty} |f \otimes 1|_{C(K_n)^{\vee} \otimes \Rbf} + \epsilon \leq |s^{-1}|_{\infty} |f|_{op} |1|_S + \epsilon\\
&=& |s^{-1}|_{\infty} |f|_{C(K_n)^{\vee}} + \epsilon = |s^{-1}|_{\infty} |\iota_{S,2}(f \otimes 1)|_{M(n,S,2)} + \epsilon \\
&=& |\iota_{S,2}(f \otimes s^{-1})|_{M(n,S,2)} + \epsilon \\
&=& |\iota_{S,2}(a)|_{M(n,S,2)} + |\iota_{S,2}(a - f \otimes s^{-1})|_{M(n,S,2)} + \epsilon \\
&\leq& |\iota_{S,2}(a)|_{M(n,S,2)} + |a - f \otimes s^{-1}|_{C(K_n)^{\vee} \otimes \Rbf} + \epsilon \leq |\iota_{S,2}(a)|_{M(n,S,2)} + 2 \epsilon,
\end{eqnarray*}
but it contradicts the definition of $\epsilon$. It implies $|a|_{C(K_n)^{\vee} \otimes \Rbf} = |\iota_{S,2}(a)|_{M(n,S,2)}$. Therefore $\iota_{S,2}$ is isometric.
By
$$
\begin{array}{ccl}
\dim_{\R}(C(K_n)^{\vee} \otimes \Rbf) &= & \dim_{\R}(\End(\Z^n)^{\vee \vee} \otimes_{\Z} \R) = 2n^2\\
& =&  \dim_{\R}(\End_{\R}(\R^n)^2) = \dim_{\R}(M(n,S,2))
\end{array}
$$
we get that $\iota_{S,2}$ is bijective.
\end{proof}

\begin{lemma}
\label{closed unit ball of real counterpart}
When $S = (\R,|\cdot|_{\infty},1)$, an $a \in C(K_n)^{\vee} \otimes \Rbf$ satisfies $|a(c)|_S \leq |c|_{C(K_n)}$ for any $c \in C(K_n)$ if and only if $|\iota_{S,2}(a)|_{M(n,S,2)} \leq 1$.
\end{lemma}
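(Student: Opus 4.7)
The plan is to reduce the inequality to a statement about the natural pairing between $M(n,S,2)$ and its $S$-linear dual, and then invoke the classical reflexivity of the finite-dimensional real Banach space $M(n,S,2)$.

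First, I would verify the compatibility identity $a(c) = c(\iota_{S,2}(a))$ for every $a \in C(K_n)^{\vee} \otimes \Rbf$ and every $c \in C(K_n)$, where on the right hand side $c$ is regarded as an element of $\uHom_S(M(n,S,2),S)$ via the natural embedding. It suffices to check this on an elementary tensor $a = f \otimes s$: the matrix $\iota_{S,2}(f \otimes s) \in M(n,S,2)$ has $(i,j,b)$-entry $s\cdot f(e_{i,j,b})$, so evaluating $c = \sum c_{i,j,b}\, e_{i,j,b}$ at it yields $\sum_{i,j,b} c_{i,j,b}\, s\, f(e_{i,j,b}) = s\, f(c) = a(c)$, by the definition of the double-dual pairing. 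Combining this identity with Lemma~\ref{embedding into the dual of M(n,S,2)}, which states that the natural embedding $C(K_n) \hookrightarrow \uHom_S(M(n,S,2),S)$ is an isometry, the condition ``$|a(c)|_S \leq |c|_{C(K_n)}$ for all $c \in C(K_n)$'' is translated to ``$|c(\iota_{S,2}(a))|_S \leq |c|_{\uHom_S(M(n,S,2),S)}$ for every $c$ in the image of $C(K_n)$''.

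Next, I would observe that this image is a $\Z$-lattice of full rank $2n^2$ inside the $2n^2$-dimensional real Banach space $\uHom_S(M(n,S,2),S)$, so its $\Q$-span is dense. By the $\R$-linearity and continuity of the evaluation map $u \mapsto u(\iota_{S,2}(a))$, together with the homogeneity and continuity of the norm, the inequality on the lattice propagates first to its $\Q$-span and then, by density, to the entire dual space. Then by the classical reflexivity of the finite-dimensional real Banach space $M(n,S,2)$, this uniform bound on functionals is equivalent to $|\iota_{S,2}(a)|_{M(n,S,2)} \leq 1$. The converse implication is immediate: if $|\iota_{S,2}(a)|_{M(n,S,2)} \leq 1$, then for every $c \in C(K_n)$ one has $|a(c)|_S = |c(\iota_{S,2}(a))|_S \leq |c|_{\uHom_S(M(n,S,2),S)} = |c|_{C(K_n)}$, again by Lemma~\ref{embedding into the dual of M(n,S,2)}.

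The main obstacle is the density-plus-continuity step, which must cleanly carry the inequality from the discrete lattice $C(K_n)$ to the whole dual space. This reduces to the elementary fact that a full-rank $\Z$-lattice has dense $\Q$-span in a finite-dimensional real vector space, after which the remainder of the argument is a direct application of classical finite-dimensional real duality.
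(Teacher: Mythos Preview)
Your argument is correct. The compatibility identity $a(c)=c(\iota_{S,2}(a))$ is exactly the right observation, and once it is in hand both implications follow from Lemma~\ref{embedding into the dual of M(n,S,2)}, the density of the $\Q$-span of the lattice, and the isometric double-dual embedding of the finite-dimensional real space $M(n,S,2)$.

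The paper's proof reaches the same conclusion but organises the work differently. For the implication $|\iota_{S,2}(a)|\le 1\Rightarrow |a(c)|\le|c|$ it does \emph{not} use your identity; instead it invokes the stronger Lemma~\ref{isometry of i_real} (that $\iota_{S,2}$ itself is an isometric isomorphism) to get $|a|_{C(K_n)^\vee\otimes\Rbf}\le 1$, and then bounds $|a(c)|$ by choosing a near-optimal presentation of $a$ via Lemma~\ref{presentation of tensor}. For the converse direction the paper argues by contrapositive, picking a functional $A$ that detects $|\iota_{S,2}(a)|>1$, approximating it rationally, and clearing denominators to produce an explicit $c\in C(K_n)$ with $|a(c)|>|c|$; your identity appears there only implicitly inside the final chain of equalities. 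Your route is more economical: it needs only Lemma~\ref{embedding into the dual of M(n,S,2)} rather than Lemma~\ref{isometry of i_real}, and the single pairing identity handles both directions symmetrically. The paper's version has the minor advantage of being more self-contained computationally, but at the cost of repeating work that your identity packages once.
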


\begin{proof}
First, suppose $|\iota_{S,2}(a)|_{M(n,S,2)} \leq 1$. We show $|a(c)|_S \leq |c|_{C(K_n)}$ for any $c \in C(K_n)$. By Lemma \ref{isometry of i_real}, we have $|a|_{{C(K_n)}^{\vee} \otimes \Rbf} \leq 1$. Let $\epsilon > 0$. By Lemma \ref{presentation of tensor}, there exists a presentation $\sum_{k=1}^{K} f_k \otimes s_k$ with $K \in \N$ of $a$ satisfying $\|(|f_k|_{C(K_n)^{\vee}} |s_k|_S)_{k=1}^{K}\|_{\{1,\ldots,K\},p_{\Rbf}} < 1+\epsilon$. We have
$$
\begin{array}{ccl}
& & |a(c)|_S = \left|\sum_{k=1}^{K} s_k f_k(c) \right|_S \leq \|(|s_kf_k(c)|_{\infty})_{k=1}^{K}\|_{\{1,\ldots,K\},p_{\Rbf}}\\
&\leq& \|(|f_k|_{C(K_n)^{\vee}} |s_k|_{\infty} |c|_{C(K_n)})_{k=1}^{K}\|_{\{1,\ldots,K\},p_{\Rbf}} \\
&=& \|(|f_k|_{C(K_n)^{\vee}} |s_k|_{\infty})_{k=1}^{K}\|_{\{1,\ldots,K\},p_{\Rbf}} |c|_{C(K_n)}\\
&\leq& (1+\epsilon) |c|_{C(K_n)}.
\end{array}
$$
It implies $|a(c)|_S \leq |c|_{C(K_n)}$.

\vspace{0.1in}
Next, suppose $|\iota_{S,2}(a)|_{M(n,S,2)} > 1$. Since every normed $\R$-vector space isometrically embeds into its second dual, there exists an $A \in \uHom_S(M(n,S,2),S)$ such that $|A(\iota_{S,2}(a))|_{\infty} > |A|_{\uHom_S(M(n,S,2),S)}$. Put
$$\epsilon := 2^{-1}(A(\iota_{S,2}(a)) - |A|_{\uHom_S(M(n,S,2),S)}) > 0.$$
Take an $A' \in \uHom_{\Q}(\End_{\Q}(\Q^n) \times \End_{\Q}(\uHom_{\Q}(\Q^n,\Q)),\Q)$ naturally regarded as an element of $\uHom_S(M(n,S,2),S)$ by the canonical bases of $S^n_{\ell^2}$ and $\uHom_S(S^n_{\ell^2},S)$ satisfying the following:
\begin{itemize}
\item[(i)] $|(A-A')(\iota_{S,2}(a))|_{\infty} < \epsilon$
\item[(ii)] $|A-A'|_{\uHom_S(M(n,S,2),S)} < \epsilon$
\end{itemize}
Take an $s \in \Z \setminus \{0\}$ such that $c := sA'$ is contained in the image of $[\End(V) \times \End(V^{\vee})]^{\vee}$, which is the underlying $\Z$-module of $C(K_n)$. By Lemma \ref{embedding into the dual of M(n,S,2)}, we have $|c|_{C(K_n)} = |s|_{\infty} |A'|_{\uHom_S(M(n,S,2),S}$. Put $c = \sum_{b=0}^{1} \sum_{i,j=1}^{n} B_{i,j,b} e_{i,j,b}$ with $((B_{i,j,b})_{i,j=1}^{n})_{b=0}^{1} \in (((\Z^n)^n)^2$. We obtain
\begin{eqnarray*}
& & a(c) = a \left(\sum_{b=0}^{1} \sum_{i,j=1}^{n} B_{i,j,b} e_{i,j,b} \right) = \sum_{b=0}^{1} \sum_{i,j=1}^{n} B_{i,j,b} a \left(e_{i,j,b} \right) \\
& = & s \sum_{b=0}^{1} \sum_{i,j=1}^{n} s^{-1} B_{i,j,b} a(e_{i,j,b}) \\
& = & s \sum_{b=0}^{1} \sum_{i,j=1}^{n} s^{-1} B_{i,j,b} (e_{i,j,b} \otimes 1_{\Rbf})(i_{S,2}(a)) \\
& = & s \left( \sum_{b=0}^{1} \sum_{i,j=1}^{n} B_{i,j,b} e_{i,j,b} \otimes 1_{\Rbf} s^{-1} \right) (i_{S,2}(a)) \\
& = & s A'(\iota_{S,2}(a)) \\
\end{eqnarray*}
and hence
\begin{eqnarray*}
& & |a(c)|_{\infty} = |s A'(\iota_{S,2}(a))|_{\infty} = |s|_{\infty} |A'(\iota_{S,2}(a))|_{\infty} \\
&>& |s|_{\infty}(|A(\iota_{S,2}(a))|_{\infty} - \epsilon) = |s|_{\infty}(|A|_{\uHom_S(M(n,S,2),S)} + \epsilon) \\
&>& |s|_{\infty} |A'|_{\uHom_S(M(n,S,2),S)} = |c|_{\uHom_S(M(n,S,2),S)} = |c|_{C(K_n)}.
\end{eqnarray*}
Thus there exists a $c \in C(K_n)$ satisfying $|a(c)|_S > |c|_{C(K_n)}$.
\end{proof}

\begin{proof}[Proof of Proposition \ref{real orthogonal group}]
By
\begin{eqnarray*}
O_n(\R) = \{U \in \GL_n(\R) \mid |U|_{\End_S(S^2_{\ell^2})} = |U^{-1}|_{\End_S(S^2_{\ell^2})} = 1\},
\end{eqnarray*}
the assertion immediately follows from Proposition \ref{coincidence of SIso}, Lemma \ref{isometry of i_real}, and Lemma \ref{closed unit ball of real counterpart}.
\end{proof}

\begin{proposition}
\label{p-adic orthogonal group}
When $S = (\Q_p,|\cdot|_{p},\infty)$, the restriction of $$\iota_{S,\infty} \colon \Cc(K_n)^{\vee} \otimes \Lip(S) \to M(n,S,\infty)$$
to
$$\SIso(C(K_n),\sigma_{C(K_n)})(\Qbf_p) \subset C(K_n)^{\vee} \otimes \Qbf_p = C(K_n)^{\vee} \otimes \Lip(S)$$
is a bijective map onto $\{(U,U^{-1}) \mid U \in \GL_n(\Z_p)\}$.
\end{proposition}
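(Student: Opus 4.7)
The plan is to mirror the structure of the proof of Proposition \ref{real orthogonal group}, with substantial simplifications afforded by the non-archimedean setting. The central tool will be Proposition \ref{coincidence of SIso}: since $\Qbf_p$ is a uniform Lipschitz Banach halo with submultiplicative norm (as $\Q_p$ is a non-archimedean field, so $|\cdot|_p$ is multiplicative), one has
$$\SIso(C(K_n),\sigma_{C(K_n)})(\Qbf_p) = \{a \in C(K_n)^{\vee} \otimes \Qbf_p \mid |a(c)|_p \leq |c|_{C(K_n)}\ \forall c \in C(K_n),\ a\sigma_S(a)=\sigma_S(a)a=1\}.$$
So it suffices to determine, via $\iota_{S,\infty}$, which pairs $(U_0,U_1) \in M(n,\Qbf_p,\infty)$ correspond to such $a$.

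First, I would verify that $\iota_{S,\infty}$ is a $\Q_p$-linear bijection by a dimension count: both source and target are $\Q_p$-vector spaces of dimension $2n^2$. Direct substitution in the formulas defining $i_S$ shows that the dual basis $(\delta_{i,j,b})$ of $(e_{i,j,b})$ is sent to the natural basis of $M(n,\Qbf_p,\infty) \cong M_n(\Q_p) \times M_n(\Q_p)$. Writing an arbitrary element as $a = \sum_{i,j,b} u_{i,j,b}\,\delta_{i,j,b}$, we then have $\iota_{S,\infty}(a) = (U_0,U_1)$ with $(U_b)_{i,j} = u_{i,j,b}$.

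The key technical step is to translate the short condition $|a(c)|_p \leq |c|_{C(K_n)}$ for all $c \in C(K_n)$ into the integrality condition $(U_0,U_1) \in M_n(\Z_p)^2$. The forward direction follows immediately by taking $c = e_{i,j,b}$, whose $C(K_n)$-norm is $1$ by Proposition \ref{norm of e_i,j,b}, forcing $|u_{i,j,b}|_p \leq 1$. The converse uses crucially the ultrametric inequality: for $c = \sum B_{i,j,b}\, e_{i,j,b}$ with $B_{i,j,b} \in \Z$,
$$|a(c)|_p = \left|\sum B_{i,j,b}\, u_{i,j,b}\right|_p \leq \max_{i,j,b} |B_{i,j,b}|_p\,|u_{i,j,b}|_p \leq 1,$$
while by Proposition \ref{criterion for the sufficient condition 2} one has $|c|_{C(K_n)} \geq \max_{i,j,b}|B_{i,j,b}|_{\infty} \geq 1$ whenever $c \neq 0$.

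Finally, the involution condition translates under $\iota_{S,\infty}$ to $U_0 U_1^{\vee} = U_1 U_0^{\vee} = 1$, i.e.\ $U_1 = (U_0^{-1})^{\vee}$; combined with the integrality from the previous step this forces both $U_0$ and $U_0^{-1}$ to lie in $M_n(\Z_p)$, so $U_0 \in \GL_n(\Z_p)$. Under the canonical identification of $\End(V^{\vee})$ with $\End(V)$ via the standard basis, $(U_0,U_1)$ takes the asserted form $(U,U^{-1})$ with $U \in \GL_n(\Z_p)$. In contrast to the real case, no density arguments or second-dual isometry comparisons are needed; the ultrametric inequality makes the integrality step essentially tautological, and the only bookkeeping lies in the correct identification of $\sigma_{C(K_n)}$ with the transpose-swap involution on $M(n,\Qbf_p,\infty)$, which is forced by the very definition of $(C(K_n),\sigma_{C(K_n)})$.
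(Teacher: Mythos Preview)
Your proposal is correct and follows the same overall architecture as the paper: invoke Proposition \ref{coincidence of SIso} (valid since $\Qbf_p$ is uniform with submultiplicative norm), identify $\iota_{S,\infty}$ as a $\Q_p$-linear bijection, translate the short condition into an integrality condition on the matrix entries, and finally read off the involution condition as $U_1 = (U_0^{-1})^\vee$.

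The one genuine difference is in how you establish the equivalence between the short condition and $(U_0,U_1)\in M_n(\Z_p)^2$. The paper first proves that $\iota_{S,\infty}$ is an \emph{isometry} (Lemmas \ref{shortness of i_p-adic} and \ref{isometry of i_p-adic}), and then deduces one direction of Lemma \ref{closed unit ball of p-adic counterpart} via the presentation argument borrowed from the real case. Your route is more economical: you bypass the isometry entirely, using only the ultrametric inequality for $|a(c)|_p$ together with the crude lower bound $|c|_{C(K_n)}\geq \max_{i,j,b}|B_{i,j,b}|_\infty \geq 1$ for $c\neq 0$ from Proposition \ref{criterion for the sufficient condition 2}. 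This works precisely because over $\Q_p$ both sides can be pinned against the constant $1$, a trick unavailable over $\R$. The paper's approach has the advantage of yielding the stronger statement that $\iota_{S,\infty}$ is isometric (parallel to Lemma \ref{isometry of i_real}), but for the proposition as stated your shortcut is entirely sufficient.
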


\begin{lemma}
\label{shortness of i_p-adic}
When $S = (\Q_p,|\cdot|_{\infty},1)$, $\iota_{S,\infty}$ is a short map.
\end{lemma}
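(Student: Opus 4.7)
The plan is to prove shortness first at the level of the underlying bilinear map $i_S \colon C(K_n)^{\vee} \otimes^m S \to M(n,S,\infty)$, and then transfer it to the completed linear map $\iota_{S,\infty}$ via the universal property of the tensor product (Proposition \ref{universal-property-tensor-product}) together with a density argument based on Lemma \ref{presentation of tensor}. The statement should be read with $S=(\Q_p,|\cdot|_p,\infty)$, so that the base absolute value on $S$ is ultrametric; this non-archimedean feature is precisely what makes the quantitative bounds of Proposition \ref{boundedness of i_S} collapse from the factor $n^{1+1/q}$ down to $1$.

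The first step refines the proof of Proposition \ref{boundedness of i_S} by replacing, in every intermediate step, the archimedean $\ell^1$-inequality $|\sum_j x_j|_p \leq \|(x_j)\|_1$ by the ultrametric inequality $|\sum_j x_j|_p \leq \max_j |x_j|_p$. Combined with the normalization $|e_{i,j,b}|_{C(K_n)}=1$ provided by Proposition \ref{norm of e_i,j,b}, this gives, for any $(f,s)\in C(K_n)^{\vee}\times S$ and $(t_j)_{j=1}^{n}\in S^n$,
$$\left|\sum_{j=1}^{n} s\,f(e_{i,j,0})\,t_j\right|_p \leq \max_{j} |s|_p |f|_{op} |e_{i,j,0}|_{C(K_n)} |t_j|_p = |s|_p |f|_{op} \max_{j}|t_j|_p.$$
Taking the $\ell^\infty$-max over $i$ yields $|i_S(f,s)_0|_{op}\leq |s|_p |f|_{op}$, and an entirely parallel computation on the dual side gives the same bound for $i_S(f,s)_1$. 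Hence $|i_S(f,s)|_{M(n,S,\infty)}\leq |s|_S\,|f|_{C(K_n)^{\vee}}$, which expresses that the bilinear map $i_S$ is short with respect to the multiplicative tensor norm on $C(K_n)^{\vee}\otimes^m S$.

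In the second step I would extend this bound to $\iota_{S,\infty}$. Given any $a\in C(K_n)^{\vee}\otimes \Lip(S)$ and any presentation $a=\sum_{k=1}^{K} f_k\otimes s_k$ furnished by Lemma \ref{presentation of tensor}, the ultrametric structure of $\Qbf_p$ forces the presentation norm to take the max form $\max_k |f_k|_{C(K_n)^{\vee}} |s_k|_S$, while in the codomain the ultrametric inequality likewise gives
$$\left|\iota_{S,\infty}(a)\right|_{M(n,S,\infty)} = \left|\sum_{k=1}^{K} i_S(f_k,s_k)\right|_{M(n,S,\infty)} \leq \max_k |i_S(f_k,s_k)|_{M(n,S,\infty)} \leq \max_k |f_k|_{C(K_n)^{\vee}} |s_k|_S.$$
Taking the infimum over all such presentations, and using continuity to pass to the completion, yields the desired short bound $|\iota_{S,\infty}(a)|_{M(n,S,\infty)}\leq |a|_{C(K_n)^{\vee}\otimes \Lip(S)}$. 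The main obstacle, and the part I would check most carefully, is the matching of the two ``$\max$'' expressions on either side: the claim hinges on the fact that the Lipschitz tensor norm in the $p$-adic case indeed reduces to the ultrametric form coming from Lemma \ref{presentation of tensor}, so that the shortness established for $i_S$ on pure tensors transfers without loss to finite sums and to the completion.
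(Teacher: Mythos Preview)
Your first step is exactly what the paper does: rerun the estimate of Proposition~\ref{boundedness of i_S} with the ultrametric inequality $|\sum_j x_j|_p \leq \max_j |x_j|_p$ in place of the $\ell^1$-bound, together with $|e_{i,j,b}|_{C(K_n)}=1$ from Proposition~\ref{norm of e_i,j,b}, to conclude that the bilinear map $i_S \colon C(K_n)^{\vee}\otimes^m S \to M(n,S,\infty)$ is short.

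Where you diverge from the paper is in the second step. Once $i_S$ is known to be short, the paper simply invokes Proposition~\ref{universal-property-tensor-product} and stops: that proposition says precisely that a short $R$-multilinear map $\otimes^m_{i} M_i \to M$ corresponds to a short $R$-linear map $\bigotimes_i M_i \to M$, so the passage from $i_S$ to $\iota_{S,\infty}$ is immediate and there is nothing further to check. Your explicit presentation argument via Lemma~\ref{presentation of tensor} is not wrong, but it is redundant, and the ``main obstacle'' you flag --- that the Lipschitz tensor norm on the $p$-adic side must reduce to a max over presentations so that shortness transfers from pure tensors to sums --- is not an obstacle at all: this is exactly the content of the universal property you already cited. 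In short, your proof plan is correct but carries more baggage than needed; the paper's one-line appeal to Proposition~\ref{universal-property-tensor-product} does the job.
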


\begin{proof}
Let $(f,s) \in {C(K_n)} \times S$. For any $(t_i)_{i=1}^{n} \in \Q_p^n$, we have
$$
\begin{array}{cll}
& & \|i_S(f,s)_0((t_i)_{i=1}^{n})\|_{S,{\infty}} = \left\|\left(\sum_{j=1}^{n} s f(e_{i,j,0}) t_j \right)_{i=1}^{n} \right\|_{S,{\infty}}\\
&=& \left\|\left(\left|\sum_{j=1}^{n} s f(e_{i,j,0}) t_j \right|_p \right)_{i=1}^{n} \right\|_{{\infty}} \\
& \leq & \left\|(\|(|s f(e_{i,j,0}) t_j|_p)_{j=1}^{n}\|_{{\infty}})_{i=1}^{n}\right\|_{{\infty}}\\
&=& \left\|(|s f(e_{i,j,0}) t_j|_p)_{i,j=1}^{n}\right\|_{{\infty}} =  \left\|(|s|_p |f|_{op} |e_{i,j,0}|_{C(K_n)} |t_j|_p)_{i,j=1}^{n}\right\|_{{\infty}} \\
& = & \left\|(|s|_p |f|_{op} |t_j|_p)_{i,j=1}^{n}\right\|_{{\infty}} = |s|_p |f|_{op} \left\|(|t_i|_p)_{i=1}^{n}\right\|_{{\infty}}\\
&=& |s|_p |f|_{C(K_n)^{\vee}} \|(t_i)_{i=1}^{n}\|_{S,{\infty}}
\end{array}
$$
by Proposition \ref{norm of e_i,j,b}, and hence $|i_S(f,s)_0|_{\End_S(S^n_{\ell^{\infty}})} \leq |s|_p |f|_{C(K_n)^{\vee}}$. Similarly, we have $|i_S(f,s)_1|_{\End_S(\uHom_S(S^n_{\ell^{\infty}},S))} \leq |s|_p |f|_{C(K_n)^{\vee}}$. Therefore $i_S$ gives a short $\Z$-bilinear map $C(K_n)^{\vee} \otimes^m S \to M(n,S,\infty)$. By Proposition \ref{universal-property-tensor-product}, $\iota_{S,\infty}$ is a short map.
\end{proof}

\begin{lemma}
\label{isometry of i_p-adic}
When $S = (\Q_p,|\cdot|_{\infty},1)$, $\iota_{S,\infty}$ is an isometric isomorphism.
\end{lemma}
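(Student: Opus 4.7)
The plan follows the structure of Lemma \ref{isometry of i_real} but is significantly simpler because the non-archimedean nature of $\Qbf_p$ collapses the relevant $\ell^p$ sums to maxima. By Lemma \ref{shortness of i_p-adic}, the inequality $|\iota_{S,\infty}(a)|_{M(n,S,\infty)} \leq |a|_{C(K_n)^{\vee} \otimes \Lip(S)}$ is already known, so the main tasks are to establish the reverse inequality and bijectivity.

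First I would show that the $\Z$-basis $((e_{i,j,b}))$ of $C(K_n)$ has dual basis $((e_{i,j,b}^{\vee}))$ in $C(K_n)^{\vee}$ with $|e_{i,j,b}^{\vee}|_{C(K_n)^{\vee}} = 1$ for each triple. The upper bound $|e_{i,j,b}^{\vee}|_{op} \leq 1$ follows from Proposition \ref{criterion for the sufficient condition 2} (which gives $|e_{i,j,b}^{\vee}(c)|_R \leq |c|_{C(K_n)}$ by reading off the $(i,j,b)$-coordinate), and the lower bound follows from $e_{i,j,b}^{\vee}(e_{i,j,b}) = 1$ together with Proposition \ref{norm of e_i,j,b}. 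After scalar extension to $\Qbf_p$, the family $(e_{i,j,b}^{\vee} \otimes 1)$ is a $\Q_p$-basis of the $2n^2$-dimensional $\Q_p$-vector space $C(K_n)^{\vee} \otimes \Lip(S)$, and $\iota_{S,\infty}$ sends it to the canonical $\Q_p$-basis $(E_{i,j,b})$ of $M(n,S,\infty)$; this yields bijectivity by dimension count.

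For the reverse inequality: given $a \in C(K_n)^{\vee} \otimes \Lip(S)$, write $a = \sum_{i,j,b} e_{i,j,b}^{\vee} \otimes c_{i,j,b}$ uniquely with $c_{i,j,b} \in \Q_p$. The matrix pair $\iota_{S,\infty}(a)$ has entries $c_{i,j,b}$, and its $\ell^{\infty}$-operator norm equals $\max_{i,j,b} |c_{i,j,b}|_p$ by the standard non-archimedean computation (the ultrametric inequality gives $\leq$, and testing on a canonical basis vector where the maximum is attained gives $\geq$). Since $\Qbf_p = \Lip(\Q_p,|\cdot|_p,\infty)$ has Lipschitz constant $C=1$, unwinding the binary tree definition of the tensor product norm shows that $|a|_{C(K_n)^{\vee} \otimes \Lip(S)}$ equals the infimum of $\max_k |f_k|_{C(K_n)^{\vee}} |s_k|_p$ over finite presentations $a = \sum_k f_k \otimes s_k$. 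Applying this to the distinguished presentation above and using $|e_{i,j,b}^{\vee}|_{C(K_n)^{\vee}} = 1$ gives $|a|_{C(K_n)^{\vee} \otimes \Lip(S)} \leq \max_{i,j,b} |c_{i,j,b}|_p = |\iota_{S,\infty}(a)|_{M(n,S,\infty)}$, which combined with shortness yields the isometry.

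The main technical obstacle is confirming that the Lipschitz tensor product norm, defined via binary trees in Subsection \ref{Direct sums}, reduces in the case $C=1$ to the infimum of maxima over finite presentations, so that an analogue of Lemma \ref{presentation of tensor} applies in the $p$-adic setting with the max replacing the $\ell^2$-sum. This should follow directly from the observation that for $C=1$ the quantity $\||\cdot|_X \circ t\|_C$ is simply the maximum over the leaves of $t$, independent of the shape of the tree, so the infimum over binary trees collapses to an infimum over finite unordered decompositions. Once this reduction is in hand, the remainder of the proof is elementary finite-dimensional linear algebra over $\Q_p$ and sidesteps the reflexivity subtleties that forced the more delicate density argument in the archimedean case.
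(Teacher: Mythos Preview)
Your proof is correct and follows essentially the same architecture as the paper's: use Lemma \ref{shortness of i_p-adic} for one inequality, expand $a$ in the dual basis $(e_{i,j,b}^{\vee})$ of norm $1$ to get the reverse inequality via the $C=1$ collapse of the Lipschitz tensor norm, and finish by a dimension count. Two small remarks on the comparison. First, the paper obtains $|e_{i,j,b}^{\vee}|_{C(K_n)^{\vee}} = 1$ by invoking Lemma \ref{embedding into the second dual of M(n,S,2)} (a real-variable statement) to identify $e_{i,j,b}^{\vee}$ with $E_{i,j,b}$ isometrically; your route via Proposition \ref{criterion for the sufficient condition 2} and Proposition \ref{norm of e_i,j,b} is more direct and avoids the detour through $\R$. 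Second, what you flag as the ``main technical obstacle'' is already in the paper: Lemma \ref{presentation of tensor} is stated for both $\R$ and $\Q_p$, and in the $p$-adic case $p_{\Qbf_p}=(1,1)$ the quantity $\|\cdot\|_{\{1,\ldots,K\},p_{\Qbf_p}}$ is precisely the infimum of maxima you describe (your observation that for $C=1$ the binary-tree norm depends only on the multiset of leaf values is exactly what makes this work). So no new analogue needs to be proved; you can simply cite Lemma \ref{presentation of tensor} as the paper does.
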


\begin{proof}
We denote by $((\delta_{i,j,b})_{i,j=1}^{n})_{b=0}^{1}$ the $\Z$-linear basis of the underlying $\Z$-module of $C(K_n)^{\vee}$ dual to $((e_{i,j,b})_{i,j=1}^{n})_{b=0}^{1}$, which coincides with the image of the canonical basis $((E_{i,j,b})_{i,j=1}^{n})_{b=0}^{1}$ of $\End(V) \times \End(V^{\vee})$. For any $(i,j,b) \in \N^3$ with $1 \leq i,j \leq n$ and $b \leq 1$, we have $|\delta_{i,j,b}|_{C(K_n)^{\vee}} = |E_{i,j,b}|_{M(n,S,\infty)} = 1$ by Lemma \ref{embedding into the second dual of M(n,S,2)} and the fact from the classical $p$-adic analysis that the second dual embedding $M(n,S,\infty) \to \uHom_S(\uHom_S(M(n,S,\infty),S),S)$ is an isometry.

\vspace{0.1in}
Let $a \in C(K_n)^{\vee} \otimes \Qbf_p$. By Lemma \ref{shortness of i_p-adic}, we have
$$|a|_{C(K_n)^{\vee} \otimes \Qbf_p} \geq |\iota_{S,\infty}(a)|_{M(n,S,\infty)}.$$
By $a = \sum_{b=0}^{1} \sum_{i,j=1}^{n} a(e_{i,j,b}) \delta_{i,j,b}$, we obtain
$$
\begin{array}{cll}
|a|_{C(K_n)^{\vee} \otimes \Qbf_p} &=& \left|\sum_{b=0}^{1} \sum_{i,j=1}^{n} a(e_{i,j,b}) \delta_{i,j,b} \right|_{C(K_n)^{\vee} \otimes \Qbf_p}\\
&\leq& \|(\|(|a(e_{i,j,b}) \delta_{i,j,b}|_{C(K_n)^{\vee} \otimes \Qbf_p})_{i,j=1}^{n}\|_{\{1,\ldots,n\}^2,1})_{b=0}^{1}\|_{\{0,1\},1} \\
& = & \|(\|(|a(e_{i,j,b}) \delta_{i,j,b}|_{C(K_n)^{\vee} \otimes \Qbf_p})_{i,j=1}^{n}\|_{{\infty}})_{b=0}^{1}\|_{{\infty}} \\
& = & \|(\|(|a(e_{i,j,b})|_S |\delta_{i,j,b} \otimes 1|_{C(K_n)^{\vee} \otimes \Qbf_p})_{i,j=1}^{n}\|_{{\infty}})_{b=0}^{1}\|_{{\infty}}\\
&\leq& \|(\|(|a(e_{i,j,b})|_S |\delta_{i,j,b}|_{op} |1|_p)_{i,j=1}^{n}\|_{{\infty}})_{b=0}^{1}\|_{{\infty}} \\
& \leq & \|(\|(|a(e_{i,j,b})|_S)_{i,j=1}^{n}\|_{{\infty}})_{b=0}^{1}\|_{{\infty}}\\
&=& \left|\left(\sum_{i,j=1}^{n} a(e_{i,j,0}) E_{i,j,0},\sum_{i,j=1}^{n} a(e_{i,j.1}) E_{i,j,1} \right) \right|_{M(n,S,\infty)} \\
& = & |\iota_{S,\infty}(a)|_{M(n,S,\infty)}
\end{array}
$$
by Lemma \ref{presentation of tensor}. It implies $|a|_{C(K_n)^{\vee} \otimes \Qbf_p} = |\iota_{S,\infty}(a)|_{M(n,S,\infty)}$ Therefore $\iota_{S,\infty}$ is isometric. By
\begin{eqnarray*}
\dim_{\Q_p}(C(K_n)^{\vee} \otimes \Qbf_p) & = & \dim_{\Q_p}(\End(\Z^n)^{\vee \vee} \otimes_{\Z} \Q_p) = 2n^2 \\
& = & \dim_{\Q_p}(\End_{\Q_p}(\Q_p^n)^2) = \dim_{\Q_p}(M(n,S,\infty)),
\end{eqnarray*}
$\iota_{S,\infty}$ is bijective.
\end{proof}

\begin{lemma}
\label{closed unit ball of p-adic counterpart}
When $S =  (\Q_p,|\cdot|_{\infty},1)$, an $a \in C(K_n)^{\vee} \otimes \Qbf_p$ satisfies $|a(c)|_S \leq |c|_{C(K_n)^{\vee}}$ for any $c \in C(K_n)$ if and only if $|\iota_{S,\infty}(a)|_{M(n,S,\infty)} \leq 1$.
\end{lemma}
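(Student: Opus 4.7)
The plan is to mirror the structure of Lemma \ref{closed unit ball of real counterpart}, but the proof should be dramatically shorter because of the ultrametric inequality available over $\Q_p$: no density/approximation argument is needed, nor the isometric embedding into the second dual. Everything reduces to testing on the explicit basis $(e_{i,j,b})$ and exploiting the fact that in the $p$-adic setting both the $\ell^\infty$ operator norm on $n\times n$ matrices over $\Q_p$ and the product norm on $M(n,S,\infty)$ coincide with the maximum of $p$-adic absolute values of the $2n^2$ matrix entries.

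First, I would identify, for an $a \in C(K_n)^{\vee} \otimes \Qbf_p$, the matrix entries of $\iota_{S,\infty}(a) \in M(n,S,\infty)$. By the very definition of $\iota_{S,q}$ as the bilinear extension of $i_S$, the $(i,j)$-entry of the $b$-th matrix component of $\iota_{S,\infty}(a)$ is the scalar $a(e_{i,j,b}) \in \Q_p$. Combined with the routine ultrametric calculation that the $\ell^\infty$ operator norm of a matrix over $\Q_p$ equals the maximum $p$-adic absolute value of its entries, and with the fact that the product norm on $M(n,S,\infty)$ is the maximum of its two factor norms, this yields
$$|\iota_{S,\infty}(a)|_{M(n,S,\infty)} = \max_{i,j,b} |a(e_{i,j,b})|_p.$$

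The implication ($\Rightarrow$) is then immediate: specialising the hypothesis $|a(c)|_p \leq |c|_{C(K_n)}$ to $c = e_{i,j,b}$ and invoking Proposition \ref{norm of e_i,j,b} (which gives $|e_{i,j,b}|_{C(K_n)} = 1$) yields $\max_{i,j,b} |a(e_{i,j,b})|_p \leq 1$. For the implication ($\Leftarrow$), assume $\max_{i,j,b} |a(e_{i,j,b})|_p \leq 1$ and write a general $c \in C(K_n)$ as $c = \sum_{i,j,b} B_{i,j,b}\, e_{i,j,b}$ with $B_{i,j,b} \in \Z$. The ultrametric inequality gives
$$|a(c)|_p \leq \max_{i,j,b} |B_{i,j,b}|_p \cdot |a(e_{i,j,b})|_p \leq \max_{i,j,b} |B_{i,j,b}|_p \leq 1,$$
the last inequality because any integer has $p$-adic absolute value at most $1$. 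To close the argument it suffices to note that $|c|_{C(K_n)} \geq 1$ whenever $c \neq 0$; this is a direct consequence of Proposition \ref{criterion for the sufficient condition 2}, which gives $\max_{i,j,b} |B_{i,j,b}|_\infty \leq |c|_{C(K_n)}$, and at least one nonzero integer contributes $\geq 1$ on the left. The case $c = 0$ is trivial.

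The only conceptual point that needs care (rather than a genuine obstacle) is the verification of the norm identity for $|\iota_{S,\infty}(a)|_{M(n,S,\infty)}$: one must explicitly check that the operator norm on each of $\End_{\Lip(S)}(\Lip(S^n_{\ell^\infty}))$ and its dual factor reduces to max-of-entries, and that the product norm on $M(n,S,\infty)$ contributes a further maximum. These are routine ultrametric computations, essentially already packaged in the preparatory work leading to Lemma \ref{isometry of i_p-adic}, but they should be stated cleanly so that the whole proof collapses to the single scalar inequality on $\max_{i,j,b} |a(e_{i,j,b})|_p$ above.
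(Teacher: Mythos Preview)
Your argument is correct, and for the forward direction it coincides with the paper's: both test on the basis elements $e_{i,j,b}$, using $|e_{i,j,b}|_{C(K_n)}=1$ and the identification of the matrix entries of $\iota_{S,\infty}(a)$ with $a(e_{i,j,b})$ (you phrase it directly, the paper by contrapositive).

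For the reverse direction your route is genuinely different and more elementary. The paper transports the argument from Lemma~\ref{closed unit ball of real counterpart}: it uses Lemma~\ref{isometry of i_p-adic} to convert $|\iota_{S,\infty}(a)|\le 1$ into $|a|_{C(K_n)^\vee\otimes\Qbf_p}\le 1$, then picks a near-optimal presentation via Lemma~\ref{presentation of tensor} and estimates $|a(c)|$ term by term. You instead exploit two purely $p$-adic/arithmetic facts unavailable in the real case: the ultrametric inequality collapses $|a(c)|_p$ to a single maximum, and the coefficients $B_{i,j,b}$ lie in $\Z$, hence have $|\cdot|_p\le 1$; combined with the lower bound $|c|_{C(K_n)}\ge 1$ for $c\neq 0$ from Proposition~\ref{criterion for the sufficient condition 2}, this closes the argument without ever invoking Lemma~\ref{isometry of i_p-adic} or Lemma~\ref{presentation of tensor}. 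The gain is a self-contained proof independent of the (harder) isometry lemma; the paper's approach, on the other hand, has the virtue of uniformity with the archimedean case.
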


\begin{proof}
The opposite implication follows from the same argument as the proof of Lemma \ref{closed unit ball of real counterpart} except we apply Lemma \ref{isometry of i_p-adic} instead of Lemma \ref{isometry of i_real} and use $p_{\Qbf_p} = (1,1)$ instead of $p_{\Rbf} = (2,1)$. Suppose $|\iota_{S,\infty}(a)|_{M(n,S,\infty)} > 1$. Since the norm of $M(n,S,\infty)$ coincides with the $\ell^{\infty}$ norm of entries, there exists an $(i,j,b) \in \N^3$ with $1 \leq i,j \leq n$ and $b \leq 1$ such that the $(i,j,b)$-the entry of $\iota_{S,\infty}(a)|_{M(n,S,\infty)}$, which coincides with $a(e_{i,j,b})$ by the definition of $\iota_{S,\infty}$, is of $p$-adic norm $> 1$. Put $c := e_{i,j,b} \in C(K_n)$. We have $|a(c)|_p > 1 = |c|_{C(K_n)}$ by Proposition \ref{norm of e_i,j,b}.
\end{proof}

\begin{proof}[Proof of Proposition \ref{p-adic orthogonal group}]
By
\begin{eqnarray*}
\GL_n(\Z_p) = \{U \in \GL_n(\Q_p) \mid |U|_{\End_S(S^n_{\ell^{\infty}})} = |U^{-1}|_{\End_S(S^n_{\ell^{\infty}})} = 1\},
\end{eqnarray*}
the assertion immediately follows from Proposition \ref{coincidence of SIso}, Lemma \ref{isometry of i_p-adic}, and Lemma \ref{closed unit ball of p-adic counterpart}.
\end{proof}

\subsection{Data availability statement}
Not applicable.

\subsection{Conflict of interest}
On behalf of all authors, the corresponding author states that there is no conflict of interest.

\newpage

\bibliographystyle{alpha}
\bibliography{fred}

\def\cprime{$'$} \def\cprime{$'$} \def\cprime{$'$} \def\cprime{$'$}
  \def\cprime{$'$} \def\cprime{$'$}
\begin{thebibliography}{KMRT98}

\bibitem[Art67]{E-Artin1}
Emil Artin.
\newblock {\em Algebraic numbers and algebraic functions}.
\newblock Gordon and Breach Science Publishers, New York, 1967.

\bibitem[Ber90]{Berkovich1}
Vladimir~G. Berkovich.
\newblock {\em Spectral theory and analytic geometry over non-{A}rchimedean
  fields}, volume~33 of {\em Mathematical Surveys and Monographs}.
\newblock American Mathematical Society, Providence, RI, 1990.

\bibitem[KMRT98]{Involutions}
Max-Albert Knus, Alexander Merkurjev, Markus Rost, and Jean-Pierre Tignol.
\newblock {\em The book of involutions}.
\newblock American Mathematical Society, Providence, RI, 1998.
\newblock With a preface in French by J.\ Tits.

\bibitem[{Pau}14]{Fred-overconvergent-global-analytic-geometry}
F.~{Paugam}.
\newblock {Overconvergent global analytic geometry}.
\newblock {\em ArXiv e-prints}, October 2014.

\bibitem[Poi07]{Poineau}
J\'er\^ome Poineau.
\newblock {\em Espaces de Berkovich sur {$\bold Z$}}.
\newblock Th\`ese. universit\'e de Rennes 1, 2007.

\bibitem[Tat67]{Tate-these}
J.~T. Tate.
\newblock Fourier analysis in number fields, and {H}ecke's zeta-functions.
\newblock In {\em Algebraic Number Theory (Proc. Instructional Conf., Brighton,
  1965)}, pages 305--347. Thompson, Washington, D.C., 1967.

\end{thebibliography}

\end{document}